\title[Perverse schobers, stability conditions and quadratic differentials II]{Perverse schobers, stability conditions and quadratic differentials II: relative graded Brauer graph algebras}
\author{Merlin Christ}
\address{MC: Université Paris Cité and Sorbonne Université, CNRS, IMJ-PRG, F-75013 Paris, France.}
\email{merlin.christ@imj-prg.fr}
\thanks{
M.C. acknowledges support by the Deutsche Forschungsgemeinschaft under Germany’s Excellence Strategy – EXC 2121 “Quantum Universe” – 390833306. M.C. has received funding from the European Union’s Horizon 2020 research and innovation programme under the Marie Skłodowska-Curie grant agreement No 101034255.}
\author{Fabian Haiden}
\address{FH: Centre for Quantum Mathematics, Department of Mathematics and Computer Science, University of Southern Denmark, Campusvej 55, 5230 Odense, Denmark}
\email{fab@sdu.dk}
\thanks{
F.H. is supported by the VILLUM FONDEN, VILLUM Investigator grant 37814 and the Sapere Aude grant 3120-00076B from the Independent Research Fund Denmark (DFF).
This paper is partly a result of the ERC-SyG project Recursive and Exact New Quantum Theory (ReNewQuantum) which received funding from the European Research Council (ERC) under the European Union's Horizon 2020 research and innovation programme under grant agreement No 810573.
}
\author{Yu Qiu}
\address{Qy:
	Yau Mathematical Sciences Center and Department of Mathematical Sciences,
	Tsinghua University,
    100084 Beijing,
    China.
    \&
    Beijing Institute of Mathematical Sciences and Applications, Yanqi Lake, Beijing, China}
\email{yu.qiu@bath.edu}
\thanks{
Qy is supported by
National Natural Science Foundation of China (No. 12425104, 12031007 and 12271279) and
National Key R\&D Program of China (No. 2020YFA0713000).}
\newcolumntype{L}{>{$}l<{$}} % math-mode version of "l" column type
\tikzset{->-/.style={decoration={  markings,  mark=at position #1 with
    {\arrow{>}}},postaction={decorate}}}
\tikzset{-<-/.style={decoration={  markings,  mark=at position #1 with
    {\arrow{<}}},postaction={decorate}}}
\theoremstyle{plain}
\newtheorem{theorem}{Theorem}[section]
\newtheorem{lemma}[theorem]{Lemma}
\newtheorem{corollary}[theorem]{Corollary}
\newtheorem{proposition}[theorem]{Proposition}
\theoremstyle{definition}
\newtheorem{definition}[theorem]{Definition}
\newtheorem{example}[theorem]{Example}
\newtheorem{remark}[theorem]{Remark}
\newtheorem{construction}[theorem]{Construction}
\numberwithin{equation}{section}
\numberwithin{figure}{section}
\def\be{\begin{equation}}
\def\ee{\end{equation}}
\newcommand\hua{\mathcal}
\newcommand\ZZ{\mathbb{Z}}
\renewcommand{\setminus}{\smallsetminus}
\renewcommand{\emptyset}{\varnothing}
\newcommand\noloc{%\colon for maps going right to left
  \nobreak
  \mspace{6mu plus 1mu}
  {:}
  \nonscript\mkern-\thinmuskip
  \mathpunct{}
  \mspace{2mu}
}
\def\on{\operatorname} %this makes \on{...} to \operatorname{...}
\mathchardef\mhyphen="2D
\newcommand\Hom{\operatorname{Hom}}
\newcommand{\C}{\operatorname{\hua{C}}} % cluster category
\newcommand{\D}{\operatorname{\hua{D}}} % triangulated category
\newcommand{\per}{\operatorname{per}} % perfect derived category
\newcommand\Stab{\operatorname{Stab}} % space of stability conditions
\newcommand{\EG}{\operatorname{EG}} % oriented exchange graph of hearts
\renewcommand{\k}{\mathbf{k}}
\def\wt{\mathbf{w}}
\newcommand\surf{\mathbf{S}}  % marked surface
\newcommand\sow{\surf_\wt}  % decorated marked surface+w
\newcommand\M{\mathbf{M}} % marked points
\newcommand\W{\Delta} % weighted points
\newcommand\A{\Gamma} %arc/global section
\newcommand{\FQuad}[2]{\operatorname{FQuad}^{#1}(#2)}
\newcommand*\cocolon{%reversed colon
        \nobreak
        \mskip6mu plus1mu
        \mathpunct{}%
        \nonscript
        \mkern-\thinmuskip
        {:}%
        \mskip2mu
        \relax
}
\def\hF{\hua{F}}
\def\hC{\hua{C}}
\newcommand\AS{\mathbb{A}} % arc system
\newcommand\dAS{\AS^*} % dual arc system
\newcommand\Sgh{\mathbb{S}}
\newcommand\eS{\widehat{\Sgh}}
\def\nn{node{$\bullet$}}
\def\ww{node[white]{$\bullet$}node[red]{$\circ$}}
\newcommand\rgraph{{\bf G}} %ribbon graph
\newcommand\glsec{{\Gamma}} %global sections of schober
\newcommand\losec{\Gamma_{\rm{loc}}} %local sections of schober
\newcommand\CS{\hC({\Sgh,\hF})} %the category whose stab we study, S-graph case
\newcommand\CSh{\CS^\heartsuit} %the canonoical heart in\CS
\def\ASG{\A_\Sgh} %obj ass.to arc system
\def\spider{\on{\mathbf{Sp}}}
\def\GSn{G(\Sgh,n)}
\def\QSn{Q(\Sgh,n)}
\def\Ende{\on{End}_{L}}
\def\Endv{\on{End}_v}
\def\psG{\hF_{\Sgh,G}}
\begin{document}
%=========================================================
\begin{abstract}
We introduce a class of dg-algebras which generalize the classical Brauer graph algebras. They are constructed from mixed-angulations of surfaces and often admit a (relative) Calabi--Yau structure. We discovered these algebras through two very distinct routes, one involving perverse schobers whose stalks are cyclic quotients of the derived categories of relative Ginzburg algebras, and another involving deformations of partially wrapped Fukaya categories of surfaces.
Applying the results of our previous work \cite{CHQ23}%write instead arXiv:2303.18249 in the arxiv abstract
, we describe the spaces of stability conditions on the derived categories of these algebras in terms of spaces of quadratic differentials.

\bigskip\noindent
\emph{Key words:}
perverse schober, quadratic differential, stability condition, tilting theory, Brauer graph algebra
\end{abstract}
\maketitle
\vspace{-1.4em}
\tableofcontents\addtocontents{toc}{\protect\setcounter{tocdepth}{2}}
%=========================================================

%=========================================================
\section{Introduction}
%=========================================================

%=========================================================
\subsection{Relative graded Brauer graph algebras}
%=========================================================

Brauer graph algebras, as introduced by Donovan--Freislich~\cite{donovan_freislich}, are certain quiver algebras with relations constructed from the data of a ribbon graph together with a multiplicity $m\geq 1$ for each vertex. We refer the reader to~\cite{schroll_bga} for a survey of more recent developments.
We consider generalizations of these algebras, the \textit{relative graded Brauer graph (=RGB) algebras}. RGB algebras are also constructed from a decorated ribbon graph, called the \textit{S-graph}, but include as input an additional integer $n>0$. Our motivation for considering these generalizations are:
\begin{enumerate}
    \item \textit{Gradings} allow us to construct new examples of Calabi--Yau algebras and triangulated categories of Calabi--Yau dimension equal to $n>0$.
    \item \textit{Relativeness} enables gluing arguments: up to Morita equivalence, any RGB algebra embeds fully faithfully into the fiber product of elementary \textit{relative} graded Brauer graph algebras. 
\end{enumerate}
In the non-relative case we are simply considering $\ZZ$-graded versions of the usual Brauer graph algebras, which are also studied in upcoming work of Gnedin--Opper--Zvonareva.
In the relative case, the RGB algebra has a non-trivial differential.

An S-graph arises from a \textit{weighted marked surface} $\sow$, equipped with a \textit{mixed-angulation}, see \Cref{subsec:WMS}.
The underlying surface ${\bf S}$ of $\sow$ is compact, oriented and possibly with boundary. A mixed-angulation is a decomposition of ${\bf S}$ into polygons with vertices at the marked points.
Each such polygon with $m\geq 1$ edges contains a so-called singular point $x$ in its interior, which we assign the degree $m$.
Such mixed-angulations generalize triangulations and $n$-angulations, where each singular point has degree $3$ and $n$, respectively.
We additionally allow singular points on the boundary of degree $\infty$ (corresponding to $\infty$-gons of the mixed-angulation).
Dual to the mixed-angulation is a graph $\Sgh$ called the S-graph.
The vertices of the S-graph are the singular points, and a degree $m$ singular point has valency $r\leq m$. Some examples can be found in \Cref{fig:S-graphs}.

\begin{figure}[ht]\centering
\makebox[\textwidth][c]{
 \includegraphics[width=15cm]{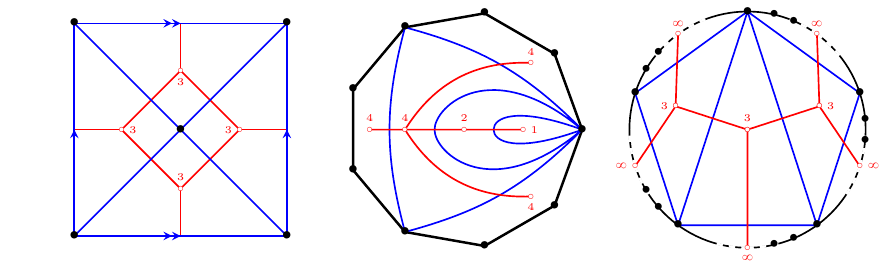}\qquad\quad}
\caption{Three examples of weighted marked surfaces with a mixed-angulation (blue) and dual S-graph (red). The left one is an ideal triangulation of a torus with two interior marked points. The central and right examples are mixed-angulations of the disk. The central one has singular points with degrees $1,2$, and $4$ which are the centers of $1$-, $2$-, and $4$-gons, respectively. The right one includes boundary singular points and infinitely many boundary marked points arising from $\infty$-gons.}
\label{fig:S-graphs}
\end{figure}

\Cref{sec_gbga} is dedicated to defining RGB algebras, and describing their Koszul duals. We also show in \Cref{prop:nCY}, that the RGB algebra is $n$-Calabi--Yau if the underlying surface has no boundary singular points and satisfies a certain orientability assumption. We also describe a counterexample in \Cref{rem:noCY}, showing that this orientability assumption cannot be dropped. 

We show that the derived category of RGB algebras arise and can be studied using two very different approaches. The first approach uses $A_\infty$-deformations of Fukaya categories of surfaces and the second approach uses categorified pervese sheaves.

%===================================================
\subsection{RGB algebras from Fukaya \texorpdfstring{$A_\infty$}{A-infinity}-categories of surfaces}\label{introsec:FukayaAinfty}
%===================================================

In \Cref{sec_deffuk} we show that RGB algebras naturally arise from certain 1-parameter deformations of (partially wrapped) Fukaya categories of surfaces. 
This approach, which relies on explicit curved $A_\infty$-structures, also works in the case of closed surfaces (no marked points or boundary). %, for which the category of global sections of a perverse schober is at present not defined.
As a byproduct, we obtain a proof of the correspondence between quadratic differentials and stability conditions for derived categories of RGB algebras.% which generalizes to the closed surface case.
%In particular, this solves the problem of realizing holomorphic quadratic differentials as stability conditions on Calabi--Yau categories in many new cases. 

The construction of the $A_\infty$-category is based on~\cite{Hai21}, but generalizes the setup in three major ways:
\begin{enumerate}
    \item Consider $n$CY categories for any $n\in\mathbb Z$ instead of just $n=3$,
    \item allow non-closed surfaces, and
    \item allow marked points on the boundary.
\end{enumerate}
These correspond, roughly, to including quadratic differentials with 1) higher order zeros, 2) higher order poles and 3) exponential singularities (as in~\cite{HKK17}), respectively.

The starting point of the construction is the Fukaya $A_\infty$-category $\mathcal F(S)=\mathcal F(S,M,\nu)$ of a surface $S$, possibly with boundary, marked points $M\subset S$, and grading structure (line field) $\nu$ on $S$.
For a choice of subset $M'\subset M$ of interior marked points and integer $n\in \ZZ$ which is a positive multiple of the indices of $\nu$ at the points in $M'$, one defines a curved, $\mathbb Z\times\mathbb Z/2$-graded deformation of $\mathcal F(S)$ over $\mathbf k[[t]]$, $|t|=2-n$. (In particular, we do not need to impose the condition $n\geq 2$ as in the perverse schober construction.)
Roughly, the deformation is obtained by counting disks with punctures which map to the punctures in $M'$.
A triangulated $A_\infty$-category $\mathcal C(S,n)=\mathcal C(S,M,\nu,M',n)$ is defined as torsion modules over the deformation (i.e.~objects living on the total space of the deformation but supported on the central fiber).
The categories $\mathcal C(S,n)$ are often $n$-Calabi--Yau or at least (right) relative Calabi--Yau.
Base change from $\mathbf k[[t]]$ to $\mathbf k$ defines a pair of adjoint functors between $\mathcal F(S)$ and $\mathcal C(S,n)$.
A general result from~\cite{Hai21} allows us to transfer stability conditions from a certain full subcategory $\mathcal F_{\mathrm{len}}(S)$ of $\mathcal F(S)$ to the corresponding full subcategory $\mathcal C_{\mathrm{len}}(S,n)$ and establish the following.

\begin{theorem}[\Cref{thm_defstab}]\label{introthm:transferstab}
    Let $S$, $M$, $\nu$, $M'$, $n$ be as above and $n\geq 3$. If $\mathcal M(S,M,\nu)$ denotes the moduli space of quadratic differentials attached to $(S,M,\nu)$ as in~\cite{HKK17}, see \Cref{subsec_transfer}, then there is a canonical map
    \[
    \mathcal M(S,M,\nu)\longrightarrow \mathrm{Stab}\left(\mathcal C_{\mathrm{len}}(S,n)\right)
    \]
    which is a biholomorphism onto a union of connected components.
\end{theorem}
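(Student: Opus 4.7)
The plan is to factor the map as a composition of two biholomorphisms, each onto a union of components of its target: first the Haiden--Katzarkov--Kontsevich identification $\mathcal M(S,M,\nu)\hookrightarrow \mathrm{Stab}(\mathcal F_{\mathrm{len}}(S))$ from~\cite{HKK17}, sending a quadratic differential $\varphi$ to the stability condition whose semistable objects are saddle connections $\gamma$ with central charges $Z(\gamma)=\int_\gamma \sqrt{\varphi}$; and second an abstract stability transfer $\mathrm{Stab}(\mathcal F_{\mathrm{len}}(S))\to \mathrm{Stab}(\mathcal C_{\mathrm{len}}(S,n))$ supplied by~\cite{Hai21}, induced by the adjoint pair between $\mathcal F(S)$ and $\mathcal C(S,n)$ coming from base change along $\mathbf k[[t]]\to \mathbf k$. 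Since $\mathcal F_{\mathrm{len}}(S)$ is defined precisely so that the HKK correspondence applies, the first factor follows from the cited paper together with the generalizations to higher-order zeros, higher-order poles and exponential singularities indicated in \Cref{introsec:FukayaAinfty}; no new work is required beyond bookkeeping.

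For the second factor, I would invoke the transfer mechanism of~\cite{Hai21}. The deformation parameter $t$ has degree $2-n$, hence carries a well-defined phase, and a stability condition $\sigma\in \mathrm{Stab}(\mathcal F_{\mathrm{len}}(S))$ transfers to one on $\mathcal C_{\mathrm{len}}(S,n)$ by declaring the images of simples under the left adjoint to be simples (with phase shifted according to $\arg t$) and extending the central charge $\CC$-linearly. To verify that this produces a genuine stability condition one must check: that simples of $\mathcal F_{\mathrm{len}}(S)$ transfer to simples of $\mathcal C_{\mathrm{len}}(S,n)$ and conversely every simple on the $\mathcal C$-side arises in this way; that the transferred slicing is locally finite; and that central charges extend compatibly under small deformations of $\sigma$. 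The condition $n\geq 3$ enters through the strictness of the resulting phases and the existence of a bounded $t$-structure with finite length heart on the $\mathcal C$-side.

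Once the map is constructed, holomorphicity is automatic because both constituents give holomorphic families of central charges, and local injectivity holds because the numerical Grothendieck group of $\mathcal C_{\mathrm{len}}(S,n)$ is generated by classes pulled up from $\mathcal F_{\mathrm{len}}(S)$. Openness of the image then follows from the fact that central charges locally parametrize $\mathrm{Stab}$. Closedness is obtained by a limiting argument: a convergent family of transferred stability conditions determines a convergent family of central charges, which lifts by the HKK correspondence to a convergent family of quadratic differentials in $\mathcal M(S,M,\nu)$, and continuity of transfer ensures the limit stability condition equals the transfer of the limit quadratic differential.

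The main obstacle is the transfer step in the generality treated here. The abstract result of~\cite{Hai21} was originally developed in the $3$CY setting for surfaces with boundary, and extending it to arbitrary $n\geq 3$, to marked points on the boundary, and, crucially, to closed surfaces (where the perverse schober gluing of~\cite{CHQ23} is unavailable) requires working directly with the explicit curved $A_\infty$-structure constructed in~\Cref{sec_deffuk}. The delicate point is classifying the simples of $\mathcal C_{\mathrm{len}}(S,n)$ and matching them with arcs on $S$, which depends on a careful analysis of the disk counts with punctures at $M'$ that define the deformation and of how torsion modules interact with the adjunction at the level of hearts.
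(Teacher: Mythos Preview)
Your overall strategy is correct and matches the paper: factor the map as the HKK biholomorphism $\mathcal M(S,M,\nu)\hookrightarrow \mathrm{Stab}(\mathcal F_{\mathrm{len}}(S,M,\nu))$ followed by the transfer of~\cite{Hai21} along the adjunction coming from base change $\mathbf k[[t]]\to\mathbf k$. The paper's proof of \Cref{thm_defstab} is literally one sentence citing these two inputs.

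However, your account of the transfer step is inaccurate in several respects, and you substantially overstate the work required. The transfer theorem (stated in the paper as \Cref{thm_transfer}) is a purely abstract categorical statement, not specific to $n=3$ or to surfaces: given $G\colon\mathcal C\to\mathcal D$ with left adjoint $F$ such that $\mathrm{Cone}(\varepsilon_X\colon FGX\to X)\cong X[n]$ for some $n\ge 3$, the image of $G$ generates $\mathcal D$, and $\mathcal C$ admits a bounded $t$-structure, one gets a pushforward $G_*$ on stability spaces which is a biholomorphism onto a union of components, with the \emph{same} central charge and $\mathcal D_\phi=G(\mathcal C_\phi)$. There is no phase shift by $\arg t$; the slicing on the target is simply the $G$-image of the slicing on the source. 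You also have the adjoint backwards: the functor carrying semistables from $\mathcal F_{\mathrm{len}}$ to $\mathcal C_{\mathrm{len}}$ is $G$, the \emph{right} adjoint of base change, not the left adjoint.

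The only hypothesis to verify is the cone condition, and this is immediate from the exact triangle~\eqref{spherical_is_shift}, $[n-1,n]\to FG\to 1\to[n,n]$, established for the base-change adjunction of any curved $A_\infty$-category over $\mathbf k[[t]]$. None of the things you list as ``delicate''---classifying simples of $\mathcal C_{\mathrm{len}}(S,n)$, matching them with arcs, analysing punctured disk counts, extending from $3$CY to general $n$---enters the argument. The closedness/openness discussion in your third paragraph is likewise unnecessary: it is already packaged into the statement that $G_*$ is a biholomorphism onto a union of components.
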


Let us emphasize that any quadratic differential on a compact Riemann surface, possibly with zeros, poles, and exponential singularities, appears in one of the moduli spaces $\mathcal M(S,M,\nu)$.
Moreover, the categories $\mathcal C_{\mathrm{len}}(S,n)$ are often better behaved than $\mathcal F_{\mathrm{len}}(S)$ in the sense that they are proper and/or (relative) Calabi--Yau.

The relation with RGB algebras is described by the following result.

\begin{theorem}[\Cref{thm:nonformalgen}] \label{introthm:generator}
    Suppose $M\neq\emptyset$, $M'=M\cap\mathrm{int}(S)$, and $n\geq 1$.
    Choose an S-graph, $\Sgh$, on $(S,M,\nu)$.
    Then the union of the edges of $\Sgh$ defines a generator $G$ of a full subcategory $\mathcal C_{\mathrm{core}}(S,n)$ of $\mathcal C_{\mathrm{len}}(S,n)$ and the endomorphism algebra of $G$ is quasi-equivalent to the RGB algebra $A(\Sgh,n)$ (see Section~\ref{sec_gbga}).
\end{theorem}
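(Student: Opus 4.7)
The plan is to work directly with the explicit curved $A_\infty$-model for $\mathcal F(S)$ constructed in \Cref{sec_deffuk}, compute the derived endomorphism algebra of $G$ by localizing the calculation at each vertex of $\Sgh$, and match it with the RGB algebra $A(\Sgh,n)$ vertex-by-vertex.

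To begin, each edge $e$ of $\Sgh$ is by definition a graded arc on $(S,M,\nu)$ whose endpoints are singular points of the mixed-angulation dual to $\Sgh$, and hence represents a compact object $L_e \in \mathcal F(S)$. The torsion-module/base-change functor from the $\mathbf k[[t]]$-deformation sends $L_e$ to an object $G_e \in \mathcal C(S,n)$, and I set $G := \bigoplus_e G_e$ and take $\mathcal C_{\mathrm{core}}(S,n)$ to be the thick subcategory generated by $G$; with this definition the generation statement is tautological. The nontrivial content is therefore the identification of the $A_\infty$-endomorphism algebra $\mathrm{End}_{\mathcal C(S,n)}(G)$ with $A(\Sgh,n)$.

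Next, I would localize the endomorphism computation at each vertex $v$ of $\Sgh$. The edges of $\Sgh$ incident to $v$ lie in the $m$-gon of the mixed-angulation containing $v$, and the arc-to-arc morphisms between the corresponding $L_e$'s are, as in the computations of \cite{HKK17,Hai21}, generated by boundary paths at $v$ running through the angles of the polygon; these paths are in canonical bijection with the arrows of the local quiver of $A(\Sgh,n)$ at $v$, with degrees forced by the line field $\nu$ together with the assignment $|t|=2-n$. The $A_\infty$-relations arise from two sources: (i) the undeformed Fukaya operations, which produce the zero-relations coming from the absence of honest disks bounded by the arc system in the polygon, and (ii) the deformation term counting once-punctured disks through the singular point interior to the polygon, which contributes a single length-$m$ cyclic operation weighted by the appropriate power of $t$. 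Passing to torsion modules (setting $t$ to act as the obstruction on the central fiber) turns this local curved $A_\infty$-algebra into the local RGB building block at $v$ defined in \Cref{sec_gbga}.

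Finally, these local quasi-isomorphisms are glued: both $\mathrm{End}_{\mathcal C(S,n)}(G)$ and $A(\Sgh,n)$ are assembled from their pieces at the vertices of $\Sgh$ by the same gluing along edges (the fiber-product description of $A(\Sgh,n)$ up to Morita equivalence emphasized in the introduction), so the vertex-wise quasi-isomorphisms assemble into the desired global quasi-equivalence. The main obstacle I anticipate is the local $A_\infty$-computation at $v$: one must rule out spurious higher operations beyond the expected length-$m$ cyclic one, and check that signs, grading shifts, and powers of $t$ match precisely. This is delicate but essentially combinatorial, following the pattern of arc-system endomorphism calculations in \cite{HKK17,Hai21} together with the curved deformation bookkeeping of \Cref{sec_deffuk}.
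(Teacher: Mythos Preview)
Your proposal has a genuine conceptual gap concerning the source of the cyclic relations in $A(\Sgh,n)$.

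You expect the relations $c_i=(-1)^{n-1}c_j$ to arise from ``once-punctured disks through the singular point interior to the polygon,'' i.e.\ from the deformation terms (2) in \Cref{def_curvedainfty} contributing a length-$m$ higher $A_\infty$-operation. But the edges of the S-graph, taken as an arc system $\mathbb G$, do not cut out \emph{any} polygons: $\Sgh$ is a deformation retract of the surface, so there are no immersed $2d$-gons at all, with or without holes. Consequently $\mathfrak m_d=0$ for all $d\neq 0,2$ on $\mathcal A_{\mathbb G}$, and there is no ``length-$m$ cyclic operation'' to find. The only deformation contribution is the \emph{curvature} $\mathfrak m_0$ from item (1) of \Cref{def_curvedainfty}, namely the closed boundary path $c_{p,X}$ winding $n/\mathrm{ind}(p)$ times around the blow-up circle at each interior endpoint $p$. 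The cyclic relations then emerge not as $A_\infty$-relations but through the differential: passing to the twisted complex $G(E')$ turns $\mathfrak m_0$ into a differential on $\mathrm{End}(E)$, which one identifies with the sub-dg-algebra $\mathcal F_{\mathbb G}[\tau]/\tau^2$ with $d(\tau_i)=\pm(c_i+(-1)^n c_j)$. Quotienting by the acyclic ideal generated by those $\tau_i$ attached to interior vertices (together with their differentials) yields exactly $A(\Sgh,n)$.

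Your localize-at-each-vertex-then-glue strategy is therefore based on the wrong mechanism, and the ``delicate combinatorial'' step you anticipate (ruling out spurious higher operations) dissolves once you observe that no polygons exist for this arc system. The paper's argument is global and direct: curved algebra $\Rightarrow$ endomorphism dg-algebra of the explicit twisted complex $\Rightarrow$ quotient by an acyclic ideal. No fibre-product gluing along edges is needed on the Fukaya side.
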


We note that $\mathcal C_{\mathrm{core}}(S,n)$ is often equal to $\mathcal C_{\mathrm{len}}(S,n)$ and has the same space of stability conditions. %Together, \Cref{introthm:transferstab,introthm:generator} thus reprove with different methods and generalize \Cref{introthm:stab}.

%=========================================================
\subsection{RGB algebras and perverse schobers}
%=========================================================

Perverse schobers are a notion of categorified perverse sheaf proposed by Kapranov--Schechtman \cite{KS14}, in which vector spaces are replaced by enhanced triangulated categories (we will use stable $\infty$-categories for this). The notion of a perverse schober remains conjectural on general stratified spaces, but on a marked surfaces, we can employ the notion of a perverse schober surface parametrized by a ribbon graph, described in \cite{Chr22}. Such a perverse schober is encoded in terms of a constructible sheaf of stable $\infty$-categories on the ribbon graph $\rgraph$ satisfying local conditions. Concretely, this amounts to a functor $\on{Exit}(\rgraph)\to \on{St}$ to the $\infty$-category of stable $\infty$-categories $\on{St}$, where $\on{Exit}(\rgraph)$ is the exit path $\infty$-category of $\rgraph$. 

In the prequel article \cite{CHQ23}, we studied the tilting theory of the stable $\infty$-categories of global sections of perverse schobers. Under certain local conditions, we matched finite length hearts of $t$-structures with mixed-angulations of the surface. As a consequence, we obtained an embedding of a space of framed quadratic differential into the space of Bridgeland stability conditions. In \Cref{subsec:BGAschober} of this article, we explore how the derived $\infty$-categories of RGB algebras relate with perverse schobers and explain how the results of \cite{CHQ23} apply.  

\subsubsection{Global sections and Koszul duality}\def\psG{\hF_{\Sgh,G}}

Given a perverse schober on a surface, its $\infty$-category of global sections can be considered as the topological Fukaya category of the surface with coefficients in the perverse schober. Topological Fukaya categories themselves, which are equivalent to the derived categories of graded gentle algebras, arise as the global sections of the simplest examples of perverse schobers \cite{DK18,DK15}. More elaborate examples are given by the derived $\infty$-categories of the (higher) relative Ginzburg dg-algebras associated with $n$-angulated surfaces. These arise as the global section of perverse schobers parametrized by the dual $n$-valent ribbon graphs of the $n$-angulations \cite{Chr22,Chr21b}. The perverse schober assigns to a vertex $v$ of the dual graph the derived $\infty$-category of the relative Ginzburg algebra of an $n$-gon, denoted in the following by $G_n$. This dg-algebra has a $\mathbb{Z}/n$ symmetry corresponding to the rotation of the $n$-gon. Given $m\geq 1$ dividing $n$, we can pass to the $\ZZ/\frac{n}{m}$-orbit dg-algebra. In characteristic not divided by $\frac{n}{m}$, we show in \Cref{prop:quotschob} that its derived $\infty$-category gives the value of a new perverse schober on an $m$-gon, instead of the $n$-gon. 

For instance, for $n=3$, the relative Ginzburg algebra of a $3$-gon $G_{3}$ has the underlying graded quiver
\[
\begin{tikzcd}
                                               & 2 \arrow[rd, "b", bend left] \arrow[ld, "a^*"] &                                                \\
1 \arrow[ru, "a", bend left] \arrow[rr, "c^*"] &                                                & 3 \arrow[ll, "c", bend left] \arrow[lu, "b^*"]
\end{tikzcd}
\]
with $|a|=|b|=|c|=0$, $|a^*|=|b^*|=|c^*|=-1$, with the differentials determined by the potential $W=cba$. Passing to the $\ZZ/3$-orbit, we obtain the dg-algebra $G_{1}$ with underlying graded quiver 
\[
\begin{tikzcd}
1 \arrow["a"', loop, distance=2.5em, out=125, in=55] \arrow["a^*", loop, distance=5em, out=140, in=40]
\end{tikzcd}
\]
with potential $W=a^3$.

Given a mixed-angulated surface $\sow$ together with a choice of an S-graph $\Sgh$ and compatible $n\geq 3$, we define the dg-algebra $\GSn$ by gluing together cofibrant versions of the cyclic quotients of the relative Ginzburg algebra of an $n$-gon $G_{n}$ along the S-graph. We further associate novel types of local dg-algebras with vertices lying in $\infty$-gons, which are a mixture of an $A_l$-quiver and a Ginzburg algebra. If $n=3$ and $\Sgh$ has only trivalent and $1$-valent vertices, the corresponding Jacobian algebra $H^0(G(\Sgh,3))$ is gentle and previously appeared in \cite{lfm22}. Supposing that $\on{char}(k)$ is not divided by $\frac{n}{m}$ with $m$ the degree of any singular point, we show the following:

\begin{theorem}[\Cref{thm:BGAschober}]\label{introthm:gluing}
The derived $\infty$-category $\D(\GSn)$ arises as the $\infty$-category of global sections of a $\eS$-parametrized perverse schober $\psG$. 

The value of $\psG$ at an $m$-valent vertex of degree $m$ of the S-graph $\Sgh$ is given by the unbounded derived $\infty$-category of the $\ZZ/\frac{n}{m}$-orbit dg-algebra $G_{n,m}$ of the relative Ginzburg algebra $G_{n}$ of the $n$-gon.   
\end{theorem}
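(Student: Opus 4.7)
The plan is to first construct the perverse schober $\psG$ on $\eS$ by specifying its local values and exit-path data, and then to identify its $\infty$-category of global sections with $\D(\GSn)$. For the construction, I would describe $\psG$ as a functor $\on{Exit}(\eS)\to\on{St}$. At each vertex $v$ of valency $m$ and degree $m$, I set $\psG(v)=\D(G_{n,m})$, equipped with the local perverse-schober structure on the surrounding $m$-gon provided by \Cref{prop:sphGnm}. At vertices arising from $\infty$-gons I use the $A_l$-Ginzburg hybrid dg-algebras constructed earlier in \Cref{sec_gbga}, whose local structure yields a perverse schober on a half-plane. On each edge I take the appropriate sheaf category of graded $\k$-modules together with the half of the spherical adjunction coming from each adjacent vertex; by construction these fit together into a well-defined exit-path diagram.

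For the global sections, the general result of \cite{Chr22} on $\infty$-categories of global sections of parametrized perverse schobers shows that $\Gamma(\eS,\psG)$ is the limit of this exit-path diagram and can, by Morita theory, be presented as the derived $\infty$-category of a dg-algebra $A_{\Sgh,n}$ obtained as a homotopy colimit of the local dg-algebras $G_{n,m}$ along the edge bimodules.

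By its very definition, the dg-algebra $\GSn$ is obtained by gluing cofibrant replacements of the $G_{n,m}$ along the S-graph $\Sgh$ using bimodules determined by the local perverse-schober structures at each vertex. Matching these two constructions produces a quasi-isomorphism $A_{\Sgh,n}\simeq\GSn$, hence an equivalence $\D(\GSn)\simeq \Gamma(\eS,\psG)$. The statement about the value at an $m$-valent degree-$m$ vertex is then immediate from the construction of $\psG$ in the first step.

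The main obstacle lies in verifying that the bimodules appearing in the gluing definition of $\GSn$ agree with those extracted from the half-spherical adjunctions of $\psG$. This reduces via \Cref{prop:sphGnm} to a local computation near a single vertex, where it follows from the uniqueness (up to equivalence) of the local perverse-schober structure on an $m$-gon with stalk $\D(G_{n,m})$, combined with compatible choices of cofibrant resolutions of the relative Ginzburg algebras $G_{n,m}$ so that the homotopy colimit computing $A_{\Sgh,n}$ becomes a strict colimit equal to $\GSn$.
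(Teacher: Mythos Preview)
Your outline follows essentially the same route as the paper: build $\psG$ by gluing the local perverse schobers on spiders (using \Cref{prop:sphGnm} and the constructions in \Cref{constr:Fnmr}, \Cref{constr:Fnr}, \Cref{constr:Frgraph}), pass from the limit defining global sections to a colimit of dg-categories via left adjoints, and then argue that this colimit computes the strict colimit $\GSn$ because the diagram is suitably cofibrant. So the strategy is right.

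A few points where your sketch is imprecise or diverges from what actually works. First, the edge value of $\psG$ is $\mathcal{D}(k[t_{2-n}])$, not ``graded $\k$-modules''; the transition functors are the pullbacks along the explicit dg-functors $k[t_{2-n}]\to G_v^n$ of \eqref{eq:ci'} and \eqref{eq:betaii}, not abstract bimodules. Second, your ``main obstacle'' resolution via \emph{uniqueness} of the local perverse-schober structure on an $m$-gon is a red herring: no such uniqueness statement is invoked or available. The matching of the gluing data is not deduced abstractly but is immediate because the diagram $\underline{G}(\Sgh,n)\colon \on{Exit}(\Sgh)^{\on{op}}\to \on{dgCat}$ is \emph{defined} using exactly those dg-functors, and one checks directly that applying $\mathcal{D}(\mhyphen)$ yields the left adjoint diagram $\psG^L$. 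Third, the passage from homotopy colimit to strict colimit is not just a matter of ``compatible cofibrant resolutions'': one needs the entire diagram to be projectively cofibrant in $\on{Fun}(\on{Exit}(\Sgh)^{\on{op}},\on{dgCat})$. The paper isolates this as a separate lemma (\Cref{lem:bipartitecof}) about diagrams over bipartite posets, using that each $\coprod_a \zeta_{v,a}\colon k[t_{2-n}]^{\amalg r}\to G_v^n$ is a cofibration of cofibrant dg-categories. You should make that step explicit rather than bundling it into ``Morita theory''. Finally, note the small preliminary step that the inclusion $\on{Exit}(\Sgh)\subset\on{Exit}(\eS)$ is final, so one may compute the limit over $\on{Exit}(\Sgh)$.
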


We expect that \Cref{introthm:gluing} remains true as stated in arbitrary characteristic, and that this can be proved using group quotients of the derived $\infty$-categories, instead of quotient dg-algebras. 

A central observation is that there is a quasi-isomorphism $\GSn\simeq A(\Sgh,n)^!$ with the Koszul dual of the relative graded Brauer graph algebra $A(\Sgh,n)$ described in \Cref{subsec_koszuldual}. Hence, there is an equivalence of stable $\infty$-categories $\per(A(\Sgh,n))\simeq \D^{\on{nil}}(\GSn)$, where $\D^{\on{nil}}(\GSn)\subset \D(\GSn)$ denotes the nilpotent derived $\infty$-category, which is given by the stable subcategory generated by the simple modules associated with the vertices of the underlying quiver. The nilpotent derived $\infty$-category $\D^{\on{nil}}(\GSn)$ is further a full subcategory of the finite derived $\infty$-category $\D^{\on{fin}}(\GSn)$ consisting of $\GSn$-modules whose underlying $k$-module is perfect. For a discussion of the relation between $\per(A(\Sgh,n))\simeq \D^{\on{nil}}(\GSn)$ and $\D^{\on{fin}}(\GSn)$, as well as a sufficient condition for them to coincide, see \Cref{lem:Dfin=Dnil} and the discussion afterwards. 
 
A global section of the perverse schober $\psG$ lies in $\D^{\on{fin}}(\GSn)$ if and only if all its restrictions to the local values of $\psG$ lie in the local finite derived categories. These are described by the perfect derived categories of relative graded Brauer graph algebras by \Cref{lem:Dfin=Dnil}. \Cref{introthm:gluing} thus implies:

\begin{corollary}\label{cor:intro}
Consider the $\eS$-parametrized perverse subschober $\psG^{\on{fin}}\subset \psG$ that assigns to each vertex $v$ of the S-graph the perfect derived $\infty$-category $\per(A)$ of the RGB algebra $A$ whose Koszul dual $A^!$ is the generalized relative Ginzburg dg-algebra with $\D(A^!)\simeq \psG(v)$. 

Then the $\infty$-category of global sections\footnote{By which we mean here the limit of $\psG^{\on{fin}}$ in the $\infty$-category $\on{St}$ of stable $\infty$-categories.} of $\psG^{\on{fin}}$ is equivalent to $\D^{\on{fin}}(\GSn)$, into which $\per(A(\Sgh,n))$ embeds fully faithfully.
\end{corollary}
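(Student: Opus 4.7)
The plan is to assemble the ingredients collected in the preceding paragraphs. By \Cref{introthm:gluing}, the global-sections functor identifies the limit of $\psG$ in $\on{St}$ with $\D(\GSn)$, and for each vertex $v$ of $\Sgh$ we obtain a restriction functor $\rho_v\colon \D(\GSn)\to \psG(v)\simeq \D(A_v^!)$, where $A_v$ denotes the local RGB algebra attached to $v$ as in the statement.

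The first, and main, task is to verify that $\psG^{\on{fin}}$ is a well-defined sub-schober of $\psG$, i.e.\ that the restriction functors of $\psG$ along the edges of $\eS$ are compatible with the assignment $v\mapsto \per(A_v)\subset \psG(v)$. This is the genuinely non-formal step: it requires unwinding the explicit local model for $\psG$ from \Cref{sec_gbga}, together with \Cref{lem:Dfin=Dnil}, which identifies $\per(A_v)$ with the local finite derived category $\D^{\on{fin}}(A_v^!)$, so that the condition reduces to the stability of the edge functors under passage to modules whose underlying $\k$-module is perfect.

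Once $\psG^{\on{fin}}$ is in place, the limit of a sub-schober in $\on{St}$ is computed as the full subcategory of $\lim \psG \simeq \D(\GSn)$ consisting of those objects $X$ with $\rho_v(X)\in \psG^{\on{fin}}(v)=\per(A_v)$ for every vertex $v$. Translating each of these local perfect conditions back to $\D^{\on{fin}}(A_v^!)$ via \Cref{lem:Dfin=Dnil}, the resulting subcategory is precisely $\D^{\on{fin}}(\GSn)$ by the characterization recorded in the paragraph just above the statement. This yields the first claim.

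For the final clause, the fully faithful embedding $\per(A(\Sgh,n))\hookrightarrow \D^{\on{fin}}(\GSn)$ follows by chaining the global Koszul-duality identification $\per(A(\Sgh,n))\simeq \D^{\on{nil}}(\GSn)$, recorded earlier in the introduction, with the general inclusion $\D^{\on{nil}}(\GSn)\subset \D^{\on{fin}}(\GSn)$. Thus the only non-formal input beyond the already-cited results is the compatibility of the restriction functors of $\psG$ with the local perfect subcategories, which is the obstruction highlighted above and is to be checked using the explicit gluing construction of \Cref{sec_gbga}.
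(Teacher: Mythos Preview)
Your proposal is correct and follows essentially the same approach as the paper: the paper's argument is the short paragraph immediately preceding the corollary, which notes that a global section lies in $\D^{\on{fin}}(\GSn)$ if and only if all its local restrictions lie in the local finite derived categories, identifies these with the local $\per(A_v)$ via \Cref{lem:Dfin=Dnil}, and then invokes \Cref{introthm:gluing}; your write-up unpacks exactly this chain of implications and correctly isolates the one non-formal point (that the edge functors of $\psG$ preserve the finite subcategories). One minor correction: the explicit gluing construction you point to is carried out in \Cref{subsec:BGAschober}, not \Cref{sec_gbga}.
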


We also show that $\GSn$ arises as a global group quotient of a relative Ginzburg algebra associated with an $n$-angulated surface, see \Cref{pp:covering}. More precisely, we consider a slight generalization of such a relative Ginzburg algebra, since we allow vertices of the S-graph $\Sgh$ of degree $\infty$, which can be of arbitrary valency.

\subsubsection{Simple minded collections and stability conditions}

The results of \cite{CHQ23} allow the construction of a simple minded collection in the $\infty$-category of global sections of a perverse schober, given as input certain local information about the perverse schober referred to as an arc system kit. In fact, a simple minded collection will be produced for every S-graph $\Sgh$ of the weighted marked surface, we denote this collection by $\A_\Sgh$. The objects of $\A_\Sgh$ are in bijection with the edges of the S-graph. The collection $\A_\Sgh$ forms the simples in the heart of a bounded $t$-structure on the stable $\infty$-category $\mathcal{C}(\Sgh,\psG)$ generated by $\A_\Sgh$. Further, the stable $\infty$-category $\mathcal{C}(\Sgh,\psG)$ does not change under flips of the S-graph. As shown in \cite{CHQ23}, the simple tilting at a simple object in $\A_\Sgh$ yields the simple minded collection associated with the corresponding flipped S-graph of $\Sgh$.  

We show in \Cref{prop:generatedstablecategory} that $\psG$ admits an arc system kit, such that $\mathcal{C}(\Sgh,\psG)\simeq \per(A(\Sgh,n))$. The main result of \cite{CHQ23} then applies to describe the space of Bridgeland stability conditions on $\per(A(\Sgh,n))$, giving an alternative proof of \Cref{introthm:transferstab} for non-closed surfaces and in good characteristic.

\begin{theorem}[{\cite[Thm.~5.4]{CHQ23}}]\label{introthm:stab}
There is a map
\[ \label{eq:app*}
    \FQuad{S}{\sow} \to \Stab(\per(A(\Sgh,n)))
\]
from the space of framed quadratic differentials on $\sow$ to the space of Bridgeland stability conditions, which is a biholomorphism onto a union of connected components.
\end{theorem}

We highlight in \Cref{introex:Ginzburg} the case $n=3$, which is particularly interesting from the perspective of Donaldson--Thomas theory.

\begin{example}\label{introex:Ginzburg}
Let $n=3$ and let $\sow$ be a graded marked surface whose interior singular points are all of degree $3$. The group actions considered above are all chosen trivial in this case. If we are given an S-graph $\Sgh$ of $\sow$, whose interior vertices are $3$-valent and whose boundary vertices are $1$-valent, the corresponding dg-algebra $\GSn$ exactly recovers the relative Ginzburg algebra considered in \cite{Chr22,Chr21b}. If $\Sgh$ also has interior vertices of valency $<3$, $\GSn$ is an intermediate version lying between the fully relative and non-relative Ginzburg algebras. If $\Sgh$ has boundary vertices of valency $>1$, the corresponding algebra $\GSn$ is of a more general type than typical relative/non-relative Ginzburg algebras. 

In the above setting the space of stability conditions on $\per(A(\Sgh,n))$ is described by quadratic differentials with poles of order $2$ and above, simple zeros and exponential singularities. The derived $\infty$-category $\mathcal{D}(\GSn)$ is relative left $3$-Calabi--Yau in the sense of \cite{BD19}, as follows from \cite{Chr23}

If we further include singular points of degree $1$ in $\sow$, we obtain a more general dg-algebra $\GSn$, which we expect to still be relative $3$-Calabi--Yau. The corresponding quadratic differentials now also have simple poles. 

Suppose we instead include singular points of degrees $1$ and $3$, but no boundary singular points, which means we consider mixed-angulations with monogons and triangles only. Then the RGB algebra $A(\Sgh,n)$ is $3$-Calabi--Yau (without the adjective relative), see \Cref{prop:nCY}. In this case, the defining quiver with potential of $\GSn$, as well as the relation between flips and mutations, previously appeared in \cite{lfm22}. The space of stability conditions is described by quadratic differentials with poles of order $n\geq 1$ and simple zeros. Building on the results of this article and its prequel \cite{CHQ23}, the corresponding Donaldson--Thomas invariants have been computed in \cite{KW24}. 
\end{example}

\subsubsection{Open problems}
Finally, we comment on possible directions of further investigation concerning perverse schobers and RGB algebras. 

While it is clear how to define RGB algebras $A(\Sgh,n)$ for $n=0$ and $n<0$, we restrict in the construction of the corresponding perverse schober to $n>0$. This is because we construct the perverse schobers from cyclic quotients of relative Ginzburg algebras of $n$-gons, which have previously been studied only in positive Calabi--Yau dimension.  
It would be interesting to generalize these construction to the case of arbitrary $n$.

The stable module category of a Brauer graph algebra $A$ is equivalent to its singularity category $\D^{\on{fin}}(A)/\per(A)$, which is in turn equivalent to the cosingularity category $\per(A^!)/\D^{\on{fin}}(A^!)$ of the Koszul dual dg-algebra $A^!$. We expect this relation between the singularity category and the cosingularity category to extend to the RGB algebra $A(\Sgh,n)$ and its Koszul dual $\GSn$ (for any $n\in \mathbb{Z}$). When $A(\Sgh,n)$ is $n$-Calabi--Yau, see \Cref{prop:nCY}, this follows for instance from \cite{GS20}.

It would be very interesting to determine when the passage to the cosingularity category of $\GSn$, $n\in \mathbb{Z}$, commutes with the gluing in terms of the perverse schober. In the case $n=3$, this commutativity is shown in \cite{Chr22b} for the relative Ginzburg algebra of a marked surface with a marked point (i.e.~singular point from the perspective of this paper) on each boundary circle. The corresponding description as the global sections of a perverse schober could be used for instance to give descriptions of the objects and morphisms in this singularity category in terms of curves in the surface. 

The cosingularity category of the non-relative Ginzburg algebra further arises as an exact localization of the cosingularity category of the relative Ginzburg algebra \cite{Chr22b}. The exact structure on the cosingularity category of the relative Ginzburg algebra is induced by the functor to the boundary. This raises the question whether the singularity category of a graded or ungraded non-relative Brauer graph algebra also arises as an exact localization of the singularity category of a relative graded or ungraded Brauer graph algebra. In the case $n=3$, it would also be interesting to investigate how the arising cosingularity category categorifies the generalized cluster algebras considered in \cite{lfm22}. 

We expect that the derived $\infty$-category of $A(\Sgh,n)$ carries under certain orientation assumptions a relative right $n$-Calabi--Yau structure in the sense of Brav--Dyckerhoff \cite{BD19}, this generalizing \Cref{prop:nCY} corresponding to the case where there are no $\infty$-gons. This relative right $n$-Calabi--Yau structure should arise from restricting a relative left $n$-Calabi--Yau on $\GSn$, see also \Cref{rem:nCY}.

%=========================================================
\subsection{Notation}
%=========================================================
\begin{itemize}
\item Weighted marked surface $\sow=(\bf S,\M,\W,\wt,\nu)$
    with $\M$ the marked points, $\W$ the singular points, $\wt\colon\W\rightarrow \mathbb{N}_{\geq -1}\cup\{\infty\}$ the weight function, and $\nu$ the line field.
\item Mixed-angulation $\AS$ of $\sow$, with dual graph an S-graph $\Sgh=\dAS$.
\item Forward flip $\AS^\sharp_\gamma$ of a mixed-angulation $\AS$ at an arc $\gamma\in\AS$.
\item Exchange graph $\EG(\sow)$ and exchange graph of S-graphs $\EG_S(\sow)$ of the weighted marked surface~$\sow$.
\item Moduli space of framed quadratic differentials $\FQuad{}{\sow}$.
\item The category $\glsec(\rgraph,\hF)$ of global sections of the perverse schobers $\hF$ parameterized by a ribbon graph $\rgraph$.
\item Extended ribbon graph $\eS$ of an S-graph $\Sgh$.
\item Collection of object $\ASG\subset \Gamma(\eS,\hF)$ corresponding to the edges of the S-graph, generating the stable subcategory $\CS$ with a canonical heart $\CSh$.
\end{itemize}

Note that our convention of forward flip (moving endpoints clockwise) and morphism direction (counter-clockwise) are the inverse of the ones in \cite{Qiu16,KQ2,BMQS}.

\subsection{Acknowledgements}
We thank Wassilij Gnedin, Sebastian Opper, Sibylle Schroll, and Alexandra Zvonareva for helpful conversations about Brauer graph algebras. We further thank Patrick Le Meur for explaining his work~\cite{lemeur20}.

%=========================================================
\section{Relative graded Brauer graph algebras}
\label{sec_gbga}
%=========================================================

In this section we give a construction of dg-algebras from S-graphs which is Koszul dual to the one in Subsection~\ref{subsec:BGAschober}.
If the S-graph has no boundary vertices, then this dg-algebra is just a graded algebra, in fact a graded enhancement of a \textit{Brauer graph algebra} in sense of~\cite{donovan_freislich}. In the presence of boundary vertices they are a relative and graded variant of Brauer graph algebras.
These dg-algebras will reappear in Subsection~\ref{subsec_sgraphs} and thus provide the link between the two approaches --- perverse schobers and $A_\infty$-categories.

%=========================================================
\subsection{Mixed-angulated surfaces and S-graphs}
\label{subsec:WMS}
%=========================================================

In this section we review the central notion of a mixed-angulation on a weighted marked surface, as introduced in \cite{CHQ23}, and the dual notion of an S-graph. We assume the reader is familiar with the notion of grading of surfaces and curves, see for example~\cite[Section 2.1]{CHQ23}.

The following types of decorated surfaces provide a home for mixed-angulations and mutations between them.

\begin{definition}\label{def:weightedmarkedsurf}
A \emph{weighted marked surface} $\sow=(\bf S,\M,\W,\wt,\nu)$ is given by
\begin{itemize}
\item a connected compact oriented surface $\bf S$, possibly with boundary,
\item a non-empty subset $\M\subset {\bf S}$ of \textit{marked points} (vertices),
\item a finite subset $\W\subset {\bf S}$ of \textit{singular points} (centers of polygons),
\item a \textit{weight function} $\wt\colon \W\to \ZZ_{\geq -1}\cup \{\infty\}$,
\item a grading structure (foliation) $\nu$ on $\bf S\setminus (\M\cup \W)$.
\end{itemize}
satisfying the following:
\begin{enumerate}
\item $\wt(x)=\infty \iff x\in\partial \bf S$,
\item $\M\cap \W=\emptyset$,
\item $|\mathrm{int}(\bf S)\cap \M|<\infty$,
\item $\M$ intersects each component of $\partial \bf S$,
\item $\M$ is discrete in $\bf S\setminus \W$, and any non-compact component of $\partial \bf S\setminus \W$ contains countably infinitely many points of $\M$,
\item the index of any $x\in\W\cap \mathrm{int}(\bf S)$ with respect to $\nu$, $\mathrm{ind}_\nu(x)$, is equal to $d(x)\coloneqq\wt(x)+2$, the \textit{degree} of $x\in\W$.
\end{enumerate}
An \emph{isomorphism} of weighted marked surfaces $\sow\to\sow'$ is a isomorphism of graded surfaces $(f,h):({\bf S},\nu)\to ({\bf S}',\nu')$ with $f(\M)=\M'$, $f(\Delta)=\Delta'$, and $\wt'\circ f=\wt$.
\end{definition}
\begin{definition}\label{def:arc}
A \emph{compact arc} in $\sow$ is an immersed curve $\gamma\colon [0,1]\to\surf$ such that $\gamma(\{0,1\})\subseteq\W$, $\gamma((0,1))\cap (\W\cup \M)=\emptyset$, and $\gamma|_{(0,1)}$ is embedded.
If $\gamma(0)=\gamma(1)$, then $\gamma$ should not be homotopic in ${\bf S}\setminus\M$, relative its endpoints, to a constant loop.
A \emph{non-compact arc} is defined like a compact arc but with the roles of  $\M$ and $\W$ reversed, in particular the endpoints lie in $\M$ instead of $\W$.
A \emph{boundary arc} is a non-compact arc which cuts out a bigon in ${\bf S}$ whose other edge is a part of $\partial\bf S$ containing exactly one point of $\W$ and containing no points of $\M\cup \W$ in its interior.
A \emph{closed curve} is an immersed curve $\gamma\colon S^1\to\mathrm{int}(\bf S)\setminus(\M\cup \W)$.
\end{definition}

The following notion generalizes the well known \textit{ideal triangulations}.

\begin{definition}\label{def:mixedangulation}
    A \emph{mixed-angulation} of a weighted marked surface, $\sow$, is a finite set, $\mathbb A$, of non-compact graded arcs in ${\bf S}$, the \textit{internal edges}, intersecting only in endpoints and cutting $\bf S$ into polygons. These \textit{$\AS$-polygons} have vertices in $\M$, edges which are arcs in $\mathbb A$ and/or boundary arcs (aka \textit{boundary edges}), and contain exactly one singular point $x\in\W$ where $d(x)$ is the number of edges of the polygon.
    Moreover, we impose the following constraint on the grading of the arcs: $i(X,Y)=0$ if $X$,$Y$ are two consecutive edges of a polygon in clockwise order. Here we have also chosen a (uniquely determined by this condition) grading on the boundary arcs.
\end{definition}

We recall the definition of an \textit{S-graph} from~\cite{HKK17}. 
These generalize ribbon graphs (with boundaries) and are dual to mixed-angulations,
where we regard singular points with infinite weight as the center of the polygons with infinity many edges.
In the following, we allow graphs with loops and multiple edges.

\begin{definition}\label{def:sgraph}
    An \emph{S-graph} is a graph, $\Sgh$, with the following additional structure: 1) a partition of the set of vertices into \textit{internal vertices} and \textit{boundary vertices}, 2) for each internal (resp. boundary) vertex $v$, a cyclic (resp. total) order on the set of halfedges meeting $v$ and 3) for each pair $h_1,h_2$ of successive halfedges a number $d(h_1,h_2)\in\ZZ_{>0}$.
\end{definition}

Given an S-graph, it is convenient to define $d(h_1,h_2)$ for more general pairs of halfedges attached to the same vertex by the rule that $d(h_1,h_3)=d(h_1,h_2)+d(h_2,h_3)$ if the sequence $(h_1,h_2,h_3)$ is compatible with the cyclic/total order at $v$.

\begin{definition}
    A mixed-angulated surface $(\sow,\mathbb A)$ determines a \textit{dual S-graph} $\Sgh=\dAS$, embedded in $\sow$ in the following way.
    The vertices of $\Sgh$ are $\W$ and the edges of $\Sgh$ are dual to the internal edges of $\mathbb A$, see Figure~\ref{fig:mixangulation}.
    The cyclic/total order on the set of halfedges meeting a given vertex $x\in \W$ is induced from the counter-clockwise order around $x$ in $\sow$. Additionally, $d(h_1,h_2)-1$ is the number of boundary edges between the interior edges corresponding to $h_1$ and $h_2$ as one goes counter-clockwise around the polygon.
    The grading on the arcs of $\mathbb A$ determines a grading on the edges of $\Sgh$ by the requirement that $i(X,Y)=0$ if $X\in\mathbb A$ and $Y$ is the dual edge.
\end{definition}

\begin{figure}[ht]\centering
\makebox[\textwidth][c]{
 \includegraphics[width=7cm]{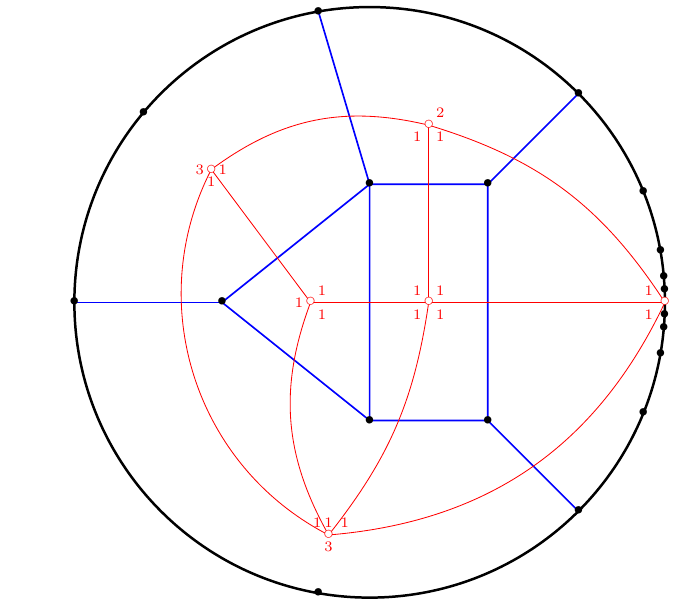}
}
\caption{Example of surface with mixed-angulation (blue) and dual S-graph (red) with its numbers $d(h_1,h_2)$.}
\label{fig:mixangulation}
\end{figure}

The important notion of a \textit{flip} is defined as follows for mixed-angulations.

\begin{definition}\label{def:flip}
Given a mixed-angulation $\AS$ and an arc $\gamma\in\AS$ we define the \emph{forward flip} $\AS^\sharp_\gamma$ to be the mixed-angulation for which $\gamma$ has been replaced by the arc $\gamma^\sharp$ obtained by rotating $\gamma$ clockwise so that each of its endpoints have moved along an adjacent edge of an $\AS$-polygon, see Figures~\ref{fig:flip1} and~\ref{fig:flip2}.
The inverse construction is the \emph{backward flip}, denoted by $\AS\mapsto\AS^\flat_\gamma$.

The grading of the arcs in the forward/backward flip is inherited from the one in the original mixed-angulation. We refer to \cite[Sec.~2.1]{CHQ23} for more information.
%The \emph{exchange graph} $\EG(\sow)$ of the weighted marked surface~$\sow$ is the directed graph whose vertices are mixed-angulations and whose oriented edges correspond to forward flips between them.
\end{definition}

\begin{figure}[ht]\centering
\makebox[\textwidth][c]{
 \includegraphics[width=12cm]{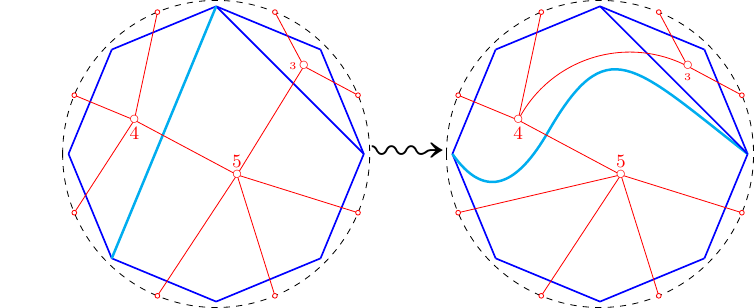}\qquad}
\caption{The forward flip at a usual arc.}
\label{fig:flip1}
\end{figure}

\begin{figure}[ht]\centering
\makebox[\textwidth][c]{
 \includegraphics[width=12cm]{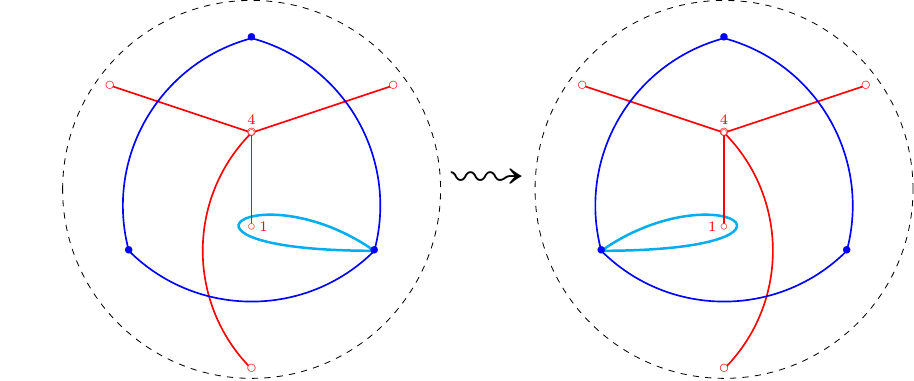}\qquad}
\caption{The forward flip at a monogon arc.}
\label{fig:flip2}
\end{figure}

%=========================================================
\subsection{From S-graphs to RGB algebras}
%=========================================================

Fix a coefficient field $\mathbf k$.
In the following, an integer $n$ is said to be \textit{compatible} with an S-graph $\Sgh$ if $n$ is a positive multiple of the degrees of all internal vertices of $\Sgh$. If there are no internal vertices, then $n$ is arbitrary.

\begin{definition}\label{def_gbga}
The \emph{relative graded Brauer graph algebra (RGB algebra)} of an S-graph $\Sgh$ and a compatible integer $n$ is the dg-algebra $A(\Sgh,n)$ given by the following graded quiver with relations and differential:
\begin{enumerate}
    \item Vertices are the edges of $\Sgh$.
    \item Arrows:
    \begin{itemize}
        \item for each pair $(i,i+1)$ of successive halfedges (``corners'') of $\Sgh$, there is an arrow $a_i$ from $i$ to $i+1$ of degree $|a_i|=d(i,i+1)\in\ZZ$. The arrows thus go in the counterclockwise direction.
        \item for each halfedge $i$ attached to a boundary vertex, there is a loop $\tau_i$ at $i$ of degree $|\tau_i|=n-1$.
    \end{itemize}
    \item Relations:
    \begin{itemize}
        \item $a_{j}a_i=0$ if $i+1\neq j$ are halfedges belonging to the same edge.
        \item for each edge $\{i,j\}$ attached to internal vertices at both ends: $c_i=(-1)^{n-1}c_j$, where
        \[
        c_i\coloneqq \left(a_{i-1}a_{i-2}\cdots a_{i+1}a_{i}\right)^{\frac{n}{m}}
        \]
        is the cycle going $n/m$ times around the vertex $v$ to which $i$ is attached, which starts and ends at $i$, and $m\coloneqq\deg(v)$ (see Figure~\ref{fig:gbga_cycle}),
        \item $\tau_i^2=0$
        \item for each pair $(i,i+1)$ of successive halfedges attached to a boundary vertex: $a_i\tau_i=(-1)^{|a_i|}\tau_{i+1}a_i$
        \item for each edge $\{i,j\}$ attached only to boundary vertices: $\tau_i=(-1)^n\tau_j$.
    \end{itemize}
    \item Differential: for each edge $\{i,j\}$ attached to an internal vertex along $i$ and a boundary vertex along $j$: $d(\tau_j)=(-1)^nc_i$. The differentials of the other generators vanish.
\end{enumerate}
\end{definition}

\begin{figure}[ht]
    \centering
    \begin{tikzpicture}[>=Stealth]
        \foreach \n in {0,1,2}
        {
            \coordinate (v) at (360/3*\n:3);
            \coordinate (w) at (360/3*\n+360/3:3);
            %edge of S-graph
            \draw[thick,red] (0,0) -- (120*\n:3);
            \draw[thick,red,dotted] (120*\n:3) -- (120*\n:3.6);
            \draw[red] (120*\n-5:2.5) node {$\n$};
            %arrow a_i
            \draw[thick,->] (120*\n:.6) arc [start angle=360/3*\n,end angle=360/3*\n+120,radius=.6];
            \draw (120*\n+60:.8) node {$a_{\n}$};
        }
        %cycle
        \draw[thick,->] (1,0) to[out=90,in=0] (0,1.1) to[out=180,in=90] (-1.2,0) to[out=-90,in=180] (0,-1.3) to[out=0,in=-90] (1.4,0) to[out=90,in=0] (0,1.5) to[out=180,in=90] (-1.6,0) to[out=-90,in=180] (0,-1.7) to[out=0,in=-90] (1.8,0);
        \draw (-60:2) node {$c_0$};
        %center
        \draw[red] (0,0) \ww;
    \end{tikzpicture}
    \caption{Schematic view of the cycle $c_0$ in the case of a trivalent vertex of the S-graph with $n/\deg(v)=2$.}
    \label{fig:gbga_cycle}
\end{figure}

\begin{remark}
    The definition of S-graph includes the condition $d(i,i+1)\geq 1$ for any pair of successive halfedges $(i,i+1)$.
    Thus, the degree of a vertex (the sum over $d(i,i+1)$'s for all $i$'s belonging to the vertex)
    is always positive.
    However, the constructions in this section work just as well for any $d(i,i+1)\in\ZZ$
    (in the case of generalized ribbon graphs with boundaries).
    
    The case $n=0$ would however need an extra modification, the exponent $n/m$ in the definition of $c_i$ in \Cref{def_gbga} must be replaced by a positive integer assigned to the vertex.
\end{remark}

\begin{remark}
    Our choice of signs in \Cref{def_gbga} is motivated by the connection with Fukaya categories of surfaces, see Section~\ref{sec_deffuk} below.
    In the case $n=0$, in particular for the special case of ungraded Brauer graph algebras, our signs are opposite to the usual ones, but have been considered previously~\cite{gss14,gnedin19}.
\end{remark}

\begin{figure}[ht]
    \centering
    \includegraphics[width=10cm]{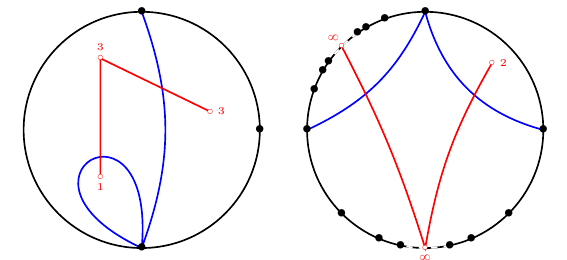}
    \caption{Two examples of decorated marked surfaces with mixed-angulation.}
    \label{fig:2examples}
\end{figure}

\begin{example}
Consider the mixed-angulation of the disk shown on the left in \Cref{fig:2examples}. There are two triangles and a monogon, so we can choose $n=3m$ to be any positive multiple of $3$. 
Since there are no $\infty$-gons, the RGB algebra $A(\Sgh,n)$ is just a graded algebra.
It is described by the quiver
\[
\begin{tikzcd}[column sep=huge, row sep=huge]
    1 \arrow[loop left,"a_{11}"]\arrow[r,swap,bend right=10,"a_{21}"] & 2 \arrow[loop right,"a_{22}"] \arrow[l,swap,bend right=10,"a_{12}"]
\end{tikzcd}
\]
where the arrows have degrees $|a_{11}|=|a_{21}|=1$, $|a_{12}|=2$, $|a_{22}|=3$ and the following relations hold:
\begin{gather*}
a_{11}a_{12}=0,\qquad a_{21}a_{11}=0,\qquad a_{22}a_{21}=0,\qquad a_{12}a_{22}=0, \\
a_{11}^{3m}=\left(a_{12}a_{21}\right)^m,\qquad a_{22}^m=\left(a_{21}a_{12}\right)^m.
\end{gather*}
A basis of $A(\Sgh,3)$ is given by the following elements:
{\renewcommand{\arraystretch}{1.4}\setlength{\tabcolsep}{8pt}
\begin{center}
\begin{tabular}{rcccc}
    degree & 0 & 1 & 2 & 3 \\ \hline
     basis elements & $e_1$, $e_2$ & $a_{11}$, $a_{21}$ & $a_{11}^2$, $a_{12}$ & $a_{11}^3$, $a_{22}$ 
\end{tabular}
\end{center}
}
As we will see below, $A(\Sgh,n)$ is $n$CY in the appropriate sense.
\end{example}

\begin{example}
Consider the mixed-angulation of the disk shown on the right in \Cref{fig:2examples}.
There are two $\infty$-gons and a bigon, so we can choose $n=2m$ to be any positive even number. 
The RGB algebra $A(\Sgh,n)$ is described by the quiver
\begin{center}
\begin{tikzcd}[column sep=huge, row sep=huge]
    1 \arrow[loop above,"\tau_{1}"] & 2 \arrow[loop right,"a_{22}"] \arrow[l,swap,"a_{12}"] \arrow[loop above,"\tau_{2}"]
\end{tikzcd}    
\end{center}
where the arrows have degrees $|a_{12}|=1$, $|a_{22}|=2$, $|\tau_1|=|\tau_2|=n-1$ and the following relations hold:
\[
a_{12}a_{22}=0,\qquad \tau_1^2=\tau_2^2=0,\qquad \tau_1a_{12}=-a_{12}\tau_2.
\]
The differential is given on generators by
\[
da_{12}=da_{22}=d\tau_1=0,\qquad d\tau_2=a_{22}^m.
\]
A basis of $A(\Sgh,2)$ is given by the following elements:
{\renewcommand{\arraystretch}{1.4}\setlength{\tabcolsep}{8pt}
\begin{center}
\begin{tabular}{rcccc}
    degree & 0 & 1 & 2 \\ \hline
     basis elements & $e_1$, $e_2$ & $a_{12}$, $\tau_1$, $\tau_2$ & $a_{22}$, $\tau_1a_{12}$
\end{tabular}
\end{center}
}
\end{example}

Suppose all vertices of $\Sgh$ are interior (there is a \textit{cyclic} order on the set of halfedges meeting any vertex),
then there are no $\tau_i$'s and the differential of $A(\Sgh, n)$ vanishes, so that $A(\Sgh, n)$ defines a graded algebra.
Always in the case of odd $n$ and under an orientability condition in the case of even $n$, this algebra turns out to be $n$-Calabi--Yau in the following sense, see \Cref{prop:nCY}.

\begin{definition}
A finite-dimensional (over $\mathbf k$) graded algebra $A$ is called \textit{$n$-Calabi--Yau} if there exists a linear functional $\mathrm{tr}\colon A^n\to\mathbf k$, where $A^n$ denotes the degree $n$ part of $A$, which is
\begin{enumerate}
    \item Symmetric: $\mathrm{tr}(ab)=(-1)^{|a||b|}\mathrm{tr}(ba)$
    \item Non-degenerate: $(a,b)\mapsto\mathrm{tr}(ab)$ is a non-degenerate pairing on $A$.
\end{enumerate}
Such a functional defines a functional on the cyclic Hochschild complex. In particular, $\per(A)$ is then a proper/right Calabi--Yau dg-category.
\end{definition}

\begin{definition}\label{def:orientable}
    An S-graph $\Sgh$ is called \textit{orientable} if there is an orientation of the edges such for two consecutive halfedges $i$, $i+1$ the parity of $d(i,i+1)$ is even (resp. odd) if $i$ and $i+1$ both point towards or away from the vertex (resp. in different directions). 
    In the case of a 1-valent vertex $v$, $i=i+1$ and $d(i,i+1)=0$ by definition, so the correct condition is instead that such $v$ has even degree.
\end{definition}

\begin{lemma}
Suppose $\Sgh$ arises as the dual S-graph of a mixed-angulation of a weighted marked surface $\sow$ with grading $\nu$. % (any S-graph arises in such a way).    
Then the orientability of $\Sgh$ in the above sense is equivalent to the orientability of $\nu$ as a foliation.
\end{lemma}
\begin{proof}
Suppose $\nu$ is orientable and an orientation has been chosen, thus all leaves of $\nu$, in particular the edges of the mixed-angulation $\sow$ (i.e.~the polygons), are oriented. To see this, note that every leaf of $\nu$ is homotopic to such an edge. This orientation alternates as one goes around the boundary of any polygon.
Since the edges of the S-graph are transverse to the edges of the mixed-angulation, and the surface is oriented, this induces an orientation of the edges of $\Sgh$. The property that the orientation alternates along the boundary of a polygon is equivalent to the required property of the integers $d(i,i+1)$ (which is the number of vertices of the dual polygon between the halfedges $i$ and $i+1$). Reversing the above steps, we also see that an orientation of the S-graph induces an orientation of $\nu$.
\end{proof}

\begin{proposition}\label{prop:nCY}
    Let $\Sgh$ be an S-graph all of whose vertices are interior. Suppose that either $n$ is odd or that $\Sgh$ is orientable in the above sense.
    Then $A(\Sgh,n)$ is $n$-Calabi--Yau.
\end{proposition}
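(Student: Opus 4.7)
The plan is to construct an explicit non-degenerate symmetric trace $\mathrm{tr}\colon A^n\to\mathbf{k}$ for $A=A(\Sgh,n)$. Since all vertices of $\Sgh$ are interior, there are no generators $\tau_i$ and no differential, so $A$ is a finite-dimensional graded algebra and it suffices to establish the Calabi--Yau property at the level of graded algebras.

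The first step is to describe the socle. Using the quadratic relation $a_j a_i=0$ whenever $j\neq i+1$ are halfedges of the same edge, every nonzero path in $A$ stays at a single vertex of $\Sgh$. A parity-of-corner-sum argument then shows that for each halfedge $h$ at an interior vertex $v$ of degree $m=\deg(v)$ and valency $r$, the only path of degree $n$ starting at $h$ is the full cycle $c_h$ of length $r\cdot n/m$. Modulo the relation $c_i=(-1)^{n-1}c_j$, this gives $A^n=\bigoplus_E\mathbf{k}\cdot c_E$, one copy for each edge $E$ of $\Sgh$.

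The trace is then defined by specifying $\mathrm{tr}(c_h)\in\{\pm 1\}$ for each halfedge $h$. If $n$ is odd, set $\mathrm{tr}(c_h)=1$ uniformly; since $(-1)^{n-1}=1$ this respects the edge relation. If $n$ is even, fix an orientation on $\Sgh$ provided by the orientability hypothesis, and set $\mathrm{tr}(c_h)=+1$ when $h$ is the tail of its edge and $-1$ when it is the head; the edge relation then reduces to $\mathrm{tr}(c_i)=-\mathrm{tr}(c_j)$, which holds tautologically. Non-degeneracy of the pairing $(a,b)\mapsto\mathrm{tr}(ab)$ follows by observing that any basis path $p$ of degree $k<n$ is a prefix of a cycle at some vertex, whose complementary suffix $q$ of degree $n-k$ satisfies $pq=\pm c_E$.

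The main task is symmetry, namely $\mathrm{tr}(\alpha\beta)=(-1)^{|\alpha||\beta|}\mathrm{tr}(\beta\alpha)$. A short case analysis, again using $a_j a_i=0$, shows that if $\alpha\beta$ has nonzero socle component and neither factor is an idempotent, then both $\alpha$ and $\beta$ are paths at a common vertex $v$, with $\alpha$ the first $k$ arrows of some cycle $c_{h_0}$ and $\beta$ the remaining arrows. Then $\beta\alpha=c_{h_0}$ and $\alpha\beta=c_{h_k}$, where $h_k$ is the halfedge reached after $k$ corners, so the identity reduces to
\[
\mathrm{tr}(c_{h_k}) = (-1)^{|\alpha|(n-|\alpha|)}\mathrm{tr}(c_{h_0}).
\]
A parity check gives $|\alpha|(n-|\alpha|)\equiv 0\pmod{2}$ when $n$ is odd (so both sides equal $1$) and $|\alpha|(n-|\alpha|)\equiv|\alpha|\pmod{2}$ when $n$ is even. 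In the even case, the orientability hypothesis is tailored precisely so that $\mathrm{tr}(c_{h_{i+1}})/\mathrm{tr}(c_{h_i})=(-1)^{d(h_i,h_{i+1})}$ between successive halfedges; this telescopes to $\mathrm{tr}(c_{h_k})/\mathrm{tr}(c_{h_0})=(-1)^{|\alpha|}$, matching the required sign. The anticipated main obstacle is exactly this simultaneous sign bookkeeping across the defining relations, the trace definition, and the symmetry identity, together with minor adjustments at $1$-valent vertices and loops of $\Sgh$, where $\alpha\beta=\beta\alpha$ makes symmetry automatic.
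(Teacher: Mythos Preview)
Your proof is correct and follows essentially the same route as the paper: the same trace $\mathrm{tr}(c_h)=1$ (odd $n$) respectively $\mathrm{tr}(c_h)=(-1)^{\varepsilon(h)}$ (even $n$, with $\varepsilon(h)\in\{0,1\}$ recording tail/head), the same pairing of a path with its complementary arc of the cycle, and the same parity reduction using $|\alpha|+|\beta|=n$. Your telescoping argument in the even case makes explicit what the paper leaves implicit; one small caution is that at $1$-valent vertices the equality $\alpha\beta=\beta\alpha$ alone does not force symmetry---you still need $(-1)^{|\alpha||\beta|}=1$, which holds because orientability forces such a vertex to have even degree $m$, so $|\alpha|\in m\ZZ$ is even.
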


\begin{proof}
First, suppose $n$ is odd.
Define $\mathrm{tr}\colon A(\Sgh,n)\to\mathbf k$ by $\mathrm{tr}(c_i)\coloneqq 1$ for any halfedge $i$.
Suppose $a$ is a composition of consecutive $a_j$'s and $b$ is the complementary composition of $a_j$'s in the sense that $ab=ba=c_i$.
Then $|a|+|b|=n$, thus $|a||b|=0\mod 2$ by our parity assumption, so $\mathrm{tr}(ab)=1=\mathrm{tr}(ba)=(-1)^{|a||b|}\mathrm{tr}(ba)$.
This shows symmetry.
Non-degeneracy is also clear since for any $a$ which is a concatenation of $a_j$'s we can find $b$ as above and $c_i$ pairs non-trivially with the corresponding idempotent in $A(\Sgh,n)$.

Suppose now that $n$ is even and that a suitable orientation of the edges has been chosen.
For any halfedge $i$ let $\varepsilon(i)=0$ (resp. $1$) if $i$ points away from (resp. towards) its parent vertex. For halfedges $i,j$ at a common vertex, we then find $d(i,j)=\varepsilon(i)+\varepsilon(j)\mod 2$ by our assumption on the orientation.
Define $\mathrm{tr}(c_i)\coloneqq (-1)^{\varepsilon(i)}$.
Then $\mathrm{tr}$ is symmetric, noting that for $a,b$ as in the previous paragraph, $|a|+|b|=n$ is now even.
 \end{proof}

\begin{remark}\label{rem:noCY}
For even $n$, $A(\Sgh,n)$ does not have a Calabi--Yau structure in general, even after possibly modifying the signs in the definition of $A(\Sgh,n)$.
To see this, suppose that $n\neq 0$ is even and $i$ is a halfedge attached to a vertex $v$ of odd degree $m=d(v)$.
Let
\[
s_i\coloneqq a_{i-1}\cdots a_{i+1}a_{i}
\]
be the cycle around the vertex $v$, starting and ending at $i$, and going once around $v$.
Then $c_i=s_i^{n/m}$ by definition and we let $t_i=s_i^{n/m-1}$ so that $c_i=s_it_i=t_is_i$.
Non-degeneracy implies $\mathrm{tr}(c_i)\neq 0$ and symmetry $\mathrm{tr}(s_it_i)=-\mathrm{tr}(t_is_i)$, since both $|s_i|$ and $|t_i|$ are odd by assumption, a contradiction if $\mathrm{char}(\mathbf k)\neq 2$.
\end{remark}

%=========================================================
\subsection{Koszul dual}
\label{subsec_koszuldual}
%=========================================================

In this subsection we give an explicit description of the Koszul dual dg-algebra of $A(\Sgh,n)$.
First, we recall the construction of the Koszul dual of a general finite-dimensional dg-algebra.
Our main references are~\cite{lefevre_hasegawa,keller_koszul,vdb15}.
Fix a field $\mathbf k$ and let $R\coloneqq \mathbf k^q$, considered as a $\mathbf k$-algebra for some positive integer $q$.
Suppose $A=(A^\bullet,\cdot,d)$ is a dg-algebra with the structure of an augmented $R$-algebra: $A=R\oplus \overline{A}$ where $\overline A\subset A$ is a (non-unital) sub-dg-algebra. To simplify the discussion and since this is sufficient for our purposes, we assume that $\dim_{\mathbf k}A<\infty$ and that any element of $\overline{A}$ is nilpotent.

\begin{definition}
    The \textit{Koszul dual} of an augmented dg $R$-algebra $A=R\oplus\overline{A}$ is the augmented dg $R$-algebra
    \[
    A^!\coloneqq\mathrm{Cobar}(\overline A^{\vee})=\bigoplus_{i\geq 0}\underbrace{\overline A^{\vee}[1]\otimes \cdots \otimes \overline A^{\vee}[1]}_{i\text{ factors}}.
    \]
    where $\overline A^{\vee}$ is the R-dual dg-coalgebra, of which we take the standard (augmented) co-bar resolution.
\end{definition}

To describe the Koszul dual more explicitly, choose a $\mathrm k$-basis $e_1,\ldots,e_n$ of $\overline A$ and let $e^1,\ldots,e^n$ be the dual basis of $\overline A^{\vee}$.
Let $d^i_j\coloneqq e^j(de_i)$ and $m^{i,j}_k\coloneqq e^k(e_i\cdot e_j)$ be the structure constants of $A$.
Then $A^!$ is the free associative $R$-algebra with generators $e^i$ of degree $|e^i|\coloneqq 1-|e_i|$ and differential defined on generators by
\[
de^k\coloneqq -\sum_i d^i_ke^i+\sum_{i,j}(-1)^{|e^i|}m^{j,i}_k e^i\otimes e^j
\]
and extended by the graded Leibniz rule.
Here, the flip $(i,j)\to (j,i)$ comes from passing to the coalgebra and the sign comes from the cobar construction (c.f.~\cite[Subsection 1.2.2]{lefevre_hasegawa} or~\cite[Appendix A]{vdb15}).
Note also that a free associative $R$-algebra is isomorphic to the path algebra of a quiver with $q=\dim_{\mathrm{k}}R$-many vertices.

The part of Koszul duality of interest to us is the following result.
It is essentially contained in \cite{lefevre_hasegawa} but not explicitly stated there.

\begin{proposition}
    Let $A$ be a finite-dimensional dg-algebra, augmented over $R$, such that any element of $\overline{A}$ is nilpotent.
    Consider $R$ as an object in the derived dg-category of $A^!$-modules.
    Then $\mathrm{End}_{A^!}(R)$ is quasi-isomorphic to $A$.
\end{proposition}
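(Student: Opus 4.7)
The plan is to follow the standard Koszul duality argument: construct an explicit cofibrant resolution $P\to R$ of $R$ as a right dg-$A^!$-module via a twisted Koszul complex and then identify its endomorphism dg-algebra with $A$. The key input is the canonical twisting cochain $\tau\colon \overline{A}^{\vee}\to A$ corresponding to the identity map $\overline{A}\to \overline{A}$ under the evident duality. The defining formula for the differential on $A^!=\mathrm{Cobar}(\overline{A}^{\vee})$ recalled in the excerpt is precisely the Maurer--Cartan condition $d\tau+\tau\cdot\tau=0$ for $\tau$, viewed as a convolution identity in $\mathrm{Hom}_{\mathbf k}(\overline{A}^{\vee},A)$.

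Using $\tau$, I would form the Koszul complex $K\coloneqq A\otimes A^!$, with its evident right $A^!$-action and twisted differential
\[
d_K(a\otimes x)=d_A(a)\otimes x+(-1)^{|a|}a\otimes d_{A^!}(x)+\sum_i (-1)^{|a|}(a\cdot e_i)\otimes (e^i\cdot x).
\]
The Maurer--Cartan condition for $\tau$ precisely ensures $d_K^2=0$, and $K$ is cofibrant over $A^!$ since it is free as an underlying graded right $A^!$-module with the differential a compatible perturbation. One then verifies that the augmentation $K\twoheadrightarrow R\otimes R\cong R$ is a quasi-isomorphism; the standard trick is to dualize and recognize the augmented $K$ as (a twist of) the bar complex of $\overline{A}$, which is contractible via the obvious extra-degeneracy homotopy. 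Convergence of the associated spectral sequence is where the nilpotency hypothesis on $\overline{A}$ enters, via the dual pro-nilpotency of the augmentation ideal $\overline{A^!}$.

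Finally, to compute $\mathrm{End}_{A^!}(K)$, I would exhibit the canonical dg-algebra map $\varphi\colon A\to \mathrm{End}_{A^!}(K)$ given by left multiplication, $\varphi(b)(a\otimes x)\coloneqq (b\cdot a)\otimes x$. A direct check using the formula for $d_K$ shows that $\varphi$ intertwines $d_A$ and $d_{\mathrm{End}}$, and via the cofibrant replacement $K\xrightarrow{\sim}R$ it realizes the classical Koszul double-dual comparison map $A\to \mathrm{RHom}_{A^!}(R,R)$, which is a quasi-isomorphism under the stated finiteness and nilpotency assumptions. The main obstacle is the acyclicity check of the augmented Koszul complex, where sign conventions and convergence of filtrations must be handled with care; everything needed is carried out in detail in Lef\`evre-Hasegawa~\cite{lefevre_hasegawa}, Keller~\cite{keller_koszul} and Van den Bergh~\cite{vdb15}, so the argument can be referenced rather than fully reproduced.
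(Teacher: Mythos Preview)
Your proposed resolution $K=A\otimes A^!$ with the twisted differential
\[
d_K(a\otimes x)=d_A(a)\otimes x+(-1)^{|a|}a\otimes d_{A^!}(x)+\sum_i(-1)^{|a|}(a\cdot e_i)\otimes(e^i\cdot x)
\]
is not a resolution of $R$. Take $A=\mathbf k[x]/(x^2)$ with $|x|=1$ and $d_A=0$; then $A^!=\mathbf k[\xi]$ with $|\xi|=0$ and $d_{A^!}=0$, and your formula gives $d_K(1\otimes\xi^n)=x\otimes\xi^{n+1}$, $d_K(x\otimes\xi^n)=0$. The cohomology of $K$ is one-dimensional, spanned by $x\otimes 1$ in degree $1$, and the augmentation $K\to R$ kills this class. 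So $K\to R$ is not a quasi-isomorphism, and the subsequent computation of $\mathrm{End}_{A^!}(K)$ does not model $\mathrm{RHom}_{A^!}(R,R)$. The underlying confusion is between two different twisting cochains: the map $\overline{A}^\vee\to A$ you write down governs the bar side (it would let you twist $A\otimes\mathrm{Bar}(A)$), whereas to build a free $A^!$-resolution of $R$ one needs the universal twisting cochain $\iota\colon A^\vee\to A^!=\mathrm{Cobar}(A^\vee)$ and the twisted tensor product $A^\vee\otimes_\iota A^!$, whose differential uses the \emph{coproduct} on $A^\vee$ rather than the product on $A$.

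With the correct resolution $P=A^\vee\otimes_\iota A^!$ your strategy does go through: $P$ is free over $A^!$ on the finite projective $R$-module $A^\vee$, so $\mathrm{End}_{A^!}(P)\cong\mathrm{Hom}_R(A^\vee,P)$, and acyclicity of $P\to R$ (which is where nilpotency of $\overline{A}$, equivalently cocompleteness of $A^\vee$, is used) yields $\mathrm{Hom}_R(A^\vee,P)\simeq\mathrm{Hom}_R(A^\vee,R)=A$. The paper takes a different and shorter route: it invokes the derived equivalence $D(A^!)\simeq D(A^\vee)$ from \cite{lefevre_hasegawa}, under which $R$ is carried to the cofree comodule $A^\vee$, and then reads off $\mathrm{End}_{A^\vee}(A^\vee)=\mathrm{Hom}_R(A^\vee,R)=A$ directly on the comodule side without ever writing down a large resolution.
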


\begin{proof}
    We use definitions and notation from~\cite{lefevre_hasegawa}.
    According to~\cite[Theorem 2.2.2.2]{lefevre_hasegawa}, there is an equivalence of derived categories
    \[
    D(A^!)\to D(A^\vee)
    \]
    where $A^\vee$ is considered as a dg-coalgebra, which is co-complete by our assumption, and $D(A^\vee)$ is the coderived category, i.e. the category of dg-comodules localized along certain weak equivalences.
    This functor sends $R$ to $R\otimes_{\tau}A^\vee=A^\vee$, where $\tau:A^\vee\to A^!$ is the universal twisting cochain.
    We note that $A^\vee$ is fibrant and cofibrant as a dg-comodule over itself in the relevant model structure.
    Thus
    \[
    \mathrm{End}_{A^!}(R)\simeq\mathrm{End}_{A^\vee}(A^\vee)=\mathrm{Hom}_R(A^\vee,R)=A.
    \]
\end{proof}

We return to the example of RGB algebras.
The dg-algebra $A(\Sgh,n)$ is $R=\mathbf k^q$-augmented, with $q$ the number of edges of $\Sgh$. $A(\Sgh,n)$ has the following basis.
\begin{enumerate}
    \item $e_i$ for each edge $i$ of $\Sgh$ (constant path), $|e_i|=0$.
    \item For each pair of halfedges $i,j$ attached to an internal vertex $v$ of $\Sgh$, $0\leq r<\frac{n}{m}$ where $m=\deg(v)$, and $i\neq j$ if $r=0$:
    \[
        a^r_{i,j}\coloneqq a_{i-1}\cdots a_{j+1}a_j\left(a_{j-1}\cdots a_{j+1}a_j\right)^r
    \]
    with $|a^r_{i,j}|=d(j,i)+rm$.
    \item For each edge $\{i,j\}$ attached to an internal vertex: $c_i=(-1)^{n-1}c_j$ of degree $n$ if $i$ and $j$ are both attached to internal vertices, or just $c_i$ if $i$ is attached to an internal vertex and $j$ to a boundary vertex. Note that in the first case we make a choice of orientation of the edge, the Koszul duals below of the two different choices are related by a canonical dg-isomorphism.
    \item For each pair of halfedges $i<j$ attached to a boundary vertex:
    \[
        a_{i,j}\coloneqq a_{i-1}\cdots a_{j+1}a_j
    \]
    with $|a_{i,j}|=d(j,i)$.
    \item For each edge $\{i,j\}$ attached to a boundary vertex: $\tau_i=(-1)^{n}\tau_j$ of degree $n-1$ if $i$ and $j$ are both attached to boundary vertices, or just $\tau_i$ if $i$ is attached to an boundary vertex and $j$ to an internal vertex.
    \item $b_{i,j}\coloneqq a_{i,j}\tau_j$ with $|b_{i,j}|=d(j,i)+n-1$ for each pair of halfedges $i<j$ attached to a boundary vertex.
    For notational purposes we also write $b_{i,i}:=\tau_i$.
\end{enumerate}

The Koszul dual dg-algebra $A(\Sgh,n)^!$ has underlying algebra given by the quiver with the same vertices and arrows corresponding to the following dual generators (going in the clockwise direction).

{\renewcommand{\arraystretch}{1.4}\setlength{\tabcolsep}{8pt}
\begin{center}
\begin{tabular}{cl@{\hskip 5em}cl}
     basis element & degree & dual generator & dual degree \\ \hline
     $a^r_{i,j}$ & $d(j,i)+rm$ & $\alpha^r_{i,j}$ & $1-d(j,i)-rm$ \\
     $c_i$ & $n$ & $\sigma_i$ & $1-n$ \\
     $a_{i,j}$ & $d(j,i)$ & $\alpha_{i,j}$ & $1-d(j,i)$ \\
     $\tau_i$ & $n-1$ & $t_i$ & $2-n$ \\
     $b_{i,j}$ & $d(j,i)+n-1$ & $\beta_{i,j}$ & $2-n-d(j,i)$ \\
\end{tabular}
\end{center}
}

The differential has the following non-zero terms:
\[
d\alpha_{i,j}^r=\sum_{\substack{i\leq k\leq j \\ 0\leq s\leq r}}(-1)^{|\alpha_{k,j}^s|}\alpha_{k,j}^s\otimes \alpha_{i,k}^{r-s}
+\sum_{\substack{j\leq k\leq i \\ 0\leq s< r}}(-1)^{|\alpha_{k,j}^s|}\alpha_{k,j}^s\otimes \alpha_{i,k}^{r-s-1}
\]
where we require $i<k$ if $r-s=0$ and $k<j$ if $s=0$ in both sums,
\[
d\sigma_i=\begin{cases}
    A_i+(-1)^{n-1}A_j & i,j \text{ interior} \\
    A_i+(-1)^{n-1}t_j & i \text{ interior}, j\text{ boundary}
\end{cases}
\]
where $\{i,j\}$ is an edge and
\[
A_i\coloneqq \sum_{\substack{0\leq r< \frac{n}{m} \\ j\neq i}}(-1)^{|\alpha_{j,i}^r|}\alpha_{j,i}^{r}\otimes\alpha_{i,j}^{\frac{n}{m}-r-1}+\sum_{0<r<\frac{n}{m}}(-1)^{|\alpha_{i,i}^r|}\alpha_{i,i}^r\otimes \alpha_{i,i}^{\frac{n}{m}-r}
\]
where $i$ is attached to a vertex of degree $m$,
\[
d\alpha_{i,j}=\sum_{j<k<i}(-1)^{|\alpha_{k,j}|}\alpha_{k,j}\otimes\alpha_{i,k},
\]
\[
d\beta_{i,j}= \sum_{j\leq k< i}(-1)^{|\beta_{k,j}|}\beta_{k,j}\otimes\alpha_{i,k}-\sum_{j< k\leq i}\alpha_{k,j}\otimes\beta_{i,k}
\]
for halfedges $i\neq j$ attached to the same boundary vertex, where we set $\beta_{i,i}=t_i$ for ease of notation. The second equation comes from $b_{i,j}=a_{i,k}b_{k,j}=(-1)^{|a_{k,j}|}b_{i,k}a_{k,j}$ and $(-1)^{|\alpha_{k,j}|+|a_{k,j}|}=-1$.

\begin{example}
We return to the example shown on the left in \Cref{fig:2examples} for $n=3$.
The dual dg-algebra $A(\Sgh,3)^!$ is described by the quiver
\[
\begin{tikzcd}[column sep=huge, row sep=huge]
    1 \arrow[loop below,"\alpha_{11}^1"]\arrow[loop left,"\alpha_{11}^2"]\arrow[loop above,"\alpha_{11}^3=\sigma_1"]\arrow[r,swap,bend right=10,"\alpha_{21}"] & 2 \arrow[loop right,"\alpha_{22}^1=\sigma_2"] \arrow[l,swap,bend right=10,"\alpha_{12}"]
\end{tikzcd}
\]
where the arrows have degrees $|\alpha_{11}|=|\alpha_{21}|=0$, $|\alpha_{11}^2|=|\alpha_{12}|=-1$, $|\sigma_1|=|\sigma_2|=-2$.
The generators with non-zero differential are
\[
d\alpha_{11}^2=\alpha_{11}^1\otimes \alpha_{11}^1,\qquad
d\sigma_1=\alpha_{11}^1\otimes\alpha_{11}^2-\alpha_{11}^2\otimes\alpha_{11}^1-\alpha_{12}\otimes\alpha_{21},\qquad d\sigma_2=\alpha_{21}\otimes\alpha_{12}.
\]
\end{example}

\begin{example}
We return to the example shown on the right in \Cref{fig:2examples} for $n=2$.
The dual dg-algebra $A(\Sgh,2)^!$ is described by the quiver
\begin{center}
\begin{tikzcd}[column sep=huge, row sep=huge]
    1 \arrow[loop above,"t_1"] & 2 \arrow[loop right,"\alpha_{22}=\sigma_2"] \arrow[l,swap,bend right=10,"\alpha_{12}"]\arrow[l,bend left=10,"\beta_{12}"] \arrow[loop above,"t_2"]
\end{tikzcd}    
\end{center}
where arrows have degrees $|\alpha_{12}|=|t_1|=|t_2|=0$, $|\sigma_2|=|\beta_{12}|=-1$.
The generators with non-zero differential are
\[
d\sigma_2=-t_2,\qquad d\beta_{12}=t_1\otimes\alpha_{12}-\alpha_{12}\otimes t_2.
\]
\end{example}

\subsection{Examples and coverings}
We spell out some examples of the dg-algebra $A(\Sgh,n)^!$.

\begin{example}\label{ex:relGinzburg}
We consider the disc as a weighted marked surface with three interior singular points of degree $3$ and four boundary singular points and set $n=3$. A mixed-angulation with dual S-graph $\Sgh$ is depicted on the left of \Cref{fig:f.flip.}. Performing the backward flip at the top left edge of $\Sgh$ yields the S-graph $\Sgh^\flat$ depicted on the right in \Cref{fig:f.flip.}.

\begin{figure}[ht]\centering
\makebox[\textwidth][c]{
\begin{tikzpicture}[scale=.5,rotate=0]
\draw[thick](0,0) circle (5);
\foreach \j in {0,...,4}{
    \draw[dashed,white,very thick]
    (54+72*\j:5)arc (54+72*\j:54+72*\j+18:5)
    (54+72*\j:5)arc (54+72*\j:54+72*\j-18:5);}
\foreach \j in {0,...,4}{
    \draw[font=\small] (90+72*\j:5) coordinate  (w\j) edge[blue, thick] (18+72*\j:5)
    (72*\j+5:5)\nn(72*\j-5:5)\nn(72*\j-36-5:5)\nn(72*\j-36+5:5)\nn;}
\draw[blue, thick](w3)to(w0)to(w2);
\foreach \j in {0,...,4}{\draw(w\j)\nn;}
\draw[red,thick,font=\tiny](0,-5) node[below]{$\infty$}\ww to (0,0) node[above]{$3$}\ww to (90-72:3.2) node[right]{$3$}\ww to (90-36:5) node[above]{$\infty$}\ww (90-72:3.2)\ww to (90-108:5) node[right]{$\infty$}\ww;
\draw[red,thick,font=\tiny](0,0) \ww to (90+72:3.2) node[left]{$3$}\ww to (90+36:5) node[above]{$\infty$}\ww (90+72:3.2)\ww to(90+108:5)node[left]{$\infty$}\ww;
\begin{scope}[shift={(14,0)}]
\draw[thick](0,0) circle (5);
\foreach \j in {0,...,4}{
    \draw[dashed,white,very thick]
    (54+72*\j:5)arc (54+72*\j:54+72*\j+18:5)
    (54+72*\j:5)arc (54+72*\j:54+72*\j-18:5);}
\draw[font=\small] (90+72:5) coordinate  (w1) edge[white] (18+72:5);
\foreach \j in {0,2,3,4}{
    \draw[font=\small] (90+72*\j:5) coordinate  (w\j) edge[blue, thick] (18+72*\j:5)
    (72*\j+5:5)\nn(72*\j-5:5)\nn(72*\j-36-5:5)\nn(72*\j-36+5:5)\nn;}
\draw[blue,thick](72+36-5:5).. controls +(-140:4) and +(108:3) ..(72*2+90:5);
\draw(72+36-5:5)\nn;
\draw[blue, thick](w3)to(w0)to(w2);
\draw[red](0,0) to (90-72:3.2);
\foreach \j in {0,...,4}{\draw(w\j)\nn;}
\draw[red,thick,font=\tiny](90-72:3.2)\ww to (90-36:5)\ww (90+72:3.2) node[below]{$3$}\ww to (90+36:5) node[above]{$\infty$}\ww to[bend left=-10] (90+36+72:5) node[left]{$\infty$}\ww (90+72:3.2)\ww to (0,0)\ww;
\draw[red,thick,font=\tiny](0,-5) node[below]{$\infty$}\ww to (0,0) node[above]{$3$}\ww to (90-72:3.2) node[right]{$3$}\ww to (90-36:5) node[above]{$\infty$}\ww (90-72:3.2)\ww to (90-108:5) node[right]{$\infty$}\ww;
\end{scope}
\end{tikzpicture}}
\caption{A mixed angulation with dual S-graph (left), as well as its backward flip at the top left edge (right). The integers denote the degrees of the singular points.}
\label{fig:f.flip.}
\end{figure}

The quiver for the Koszul dual dg RGB algebras $A(\Sgh,3)^!$ and $A(\Sgh^\flat,3)^!$ are shown in 
Figure~\ref{fig:Q1} and Figure~\ref{fig:Q2}, respectively.

\begin{figure}[ht]\centering
\begin{tikzcd}[column sep=huge, row sep=huge]
1 \arrow[rd, "{\alpha_{1,2}}", bend left=15, shift left] \arrow[dd, "{\alpha_{1,3}}", shift left] &                                                                                                                                                                                                                                                &                                                                                                &                                                                                                                                                                                                                                                & 6 \arrow[ld, "{\alpha_{6,4}}", shift left] \arrow[dd, "{\alpha_{6,7}}", bend left=15, shift left] \\
& 2 \arrow[ld, "{\alpha_{2,3}}", bend left=15, shift left] \arrow[lu, "{\alpha_{2,1}}", shift left] \arrow[rr, "{\alpha_{2,4}}", bend left=15, shift left] \arrow[rd, "{\alpha_{2,5}}", shift left] \arrow["\sigma_2"', loop, distance=4em, in=95, out=50] &                                                                                                & 4 \arrow[ll, "{\alpha_{4,2}}"] \arrow[ld, "{\alpha_{4,5}}", bend left=15, shift left] \arrow[ru, "{\alpha_{4,6}}", bend left=15, shift left] \arrow[rd, "{\alpha_{4,7}}", shift left] \arrow["\sigma_4"', loop, distance=4em, in=130, out=85] &                                                                                                \\
3 \arrow[uu, "{\alpha_{3,1}}", bend left=15, shift left] \arrow[ru, "{\alpha_{3,2}}", shift left] &                                                                                                                                                                                                                                                & 5 \arrow[ru, "{\alpha_{5,4}}", shift left] \arrow[lu, "{\alpha_{5,2}}", bend left=15, shift left] &                                                                                                                                                                                                                                                & 7 \arrow[uu, "{\alpha_{7,6}}", shift left] \arrow[lu, "{\alpha_{7,4}}", bend left=15, shift left]
\end{tikzcd}
\caption{The quiver for $A(\Sgh,3)^!$ corresponding to the left S-graph in Figure~\ref{fig:f.flip.}.}
\label{fig:Q1}
\end{figure}

\begin{figure}[ht]\centering
\begin{tikzcd}[column sep=huge, row sep=huge]
 1 \arrow[rd, "{\alpha_{1,2}'}", bend left=15] \arrow[dd, "{\alpha_{1,3}'}"', shift right] \arrow[dd, "{\beta_{1,3}}", shift left]&                                                                                                                                                                                                                                                &                                                                                                &                                                                                                                                                                                                                                                & 6 \arrow[ld, "{\alpha_{6,4}}", shift left] \arrow[dd, "{\alpha_{6,7}}", bend left=15, shift left] \\
& 2 \arrow[lu, "{\alpha_{2,1}'}", description, bend left=15] \arrow[rr, "{\alpha_{2,4}}", bend left=15, shift left] \arrow[rd, "{\alpha_{2,5}}", shift left] \arrow["\sigma_2"', loop, distance=4em, in=95, out=50] &                                           & 4 \arrow[ll, "{\alpha_{4,2}}"] \arrow[ld, "{\alpha_{4,5}}", bend left=15, shift left] \arrow[ru, "{\alpha_{4,6}}", bend left=15, shift left] \arrow[rd, "{\alpha_{4,7}}", shift left] \arrow["\sigma_4"', loop, distance=4em, in=130, out=85] &                                                                                                \\
3 \arrow["{\beta_{3,3}}"', loop, distance=2em, in=215, out=145]     &                                                                                                                                                                                                                                                & 5 \arrow[ru, "{\alpha_{5,4}}", shift left] \arrow[lu, "{\alpha_{5,2}}", bend left=15, shift left] &                                                                                                                                                                                                                                                & 7 \arrow[uu, "{\alpha_{7,6}}", shift left] \arrow[lu, "{\alpha_{7,4}}", bend left=15, shift left]
\end{tikzcd}
\caption{The quiver for $A(\Sgh,3)^!$ corresponding to the right S-graph in Figure~\ref{fig:f.flip.}.}
\label{fig:Q2}
\end{figure}
\end{example}

\begin{example}\label{ex:quot1}
Consider the disc $\sow$ with six interior singular points of degrees $1,2,4,4,4,4$ and let $n=4$. The weighted marked surface $\sow$ admits a mixed-angulation with dual S-graph $\Sgh$, see the right picture of \Cref{fig:CY4}.

\begin{figure}[ht]
\begin{tikzpicture}[scale=1.5]
%\draw[white,fill=gray!15](-3,3)rectangle(3,-3);
%\draw[white,fill=white](-2,2)rectangle(2,-2);
\begin{scope}[]
\clip(-2.2,2.2)rectangle(2.2,-2.2);
\draw (1,1) rectangle (-1,-1);
\foreach \j in {1,2,3,4}{
\begin{scope}[shift={(90*\j:2)},rotate=90*\j-90]
  \draw[blue,thick]
    (-1,1)\nn to(1,1)\nn
        to[bend left=-50](1,-1)\nn to(-1,-1)\nn
        to[bend left=-50](-1,1)\nn;
  \draw[blue,thick](-45:1.414)to[bend left=45]
        (-45+30:1.414)to[bend left=45]
        (-45+60:1.414)to[bend left=45](-45+90:1.414);
  \draw[thick](-45:1.414)arc(-45:45:1.414);
  \foreach \k in {0,...,9}
    {\draw[](-45+10*\k:1.414)\nn;}
\end{scope}
\begin{scope}[shift={(90*\j:2)},rotate=90*\j+90]
  \foreach \k in {0,...,9}{\draw(-45+10*\k:1.414)\nn;}
  \draw[blue,thick](-45:1.414)to[bend left=45]
    (-45+30:1.414)to[bend left=45]
    (-45+60:1.414)to[bend left=45](-45+90:1.414);
  \draw[thick](-45:1.414)arc(-45:45:1.414);
\end{scope}}
\draw[white,fill=white](2.2,2.2)rectangle(-2.2,2)(-2.2,-2.2)rectangle(2.2,-2)
    (-2.2,2)rectangle(-2,-2.2)(2.2,2.2)rectangle(2,-2.2);
\draw[opacity=.5,orange,line width=1.2mm,->-=.6,>=stealth](-1.414,-2)to(1.414,-2);
\draw[opacity=.5,orange,line width=1.2mm,->-=.6,>=stealth](-1.414,2)to(1.414,2);
\draw[opacity=.5,Emerald,line width=1.2mm,->-=.6,>=stealth](-2,-1.414)to(-2,1.414);
\draw[opacity=.5,Emerald,line width=1.2mm,->-=.6,>=stealth](2,-1.414)to(2,1.414);

\foreach \j in {1,2,3,4}{
\begin{scope}[shift={(0,0)},rotate=90*\j]
  \draw[red,thin]
    (2,1)\ww to[bend left=15](1.35,1.15)\ww (2,-1)\ww to[bend left=15](1.35,-1.15)\ww
    (2,1.3)\ww to(2,-1.3)\ww (2,1)\ww to(2,-1)\ww  (0,0)\ww to(2,0)\ww;
\end{scope}}
\draw[gray,thick,-stealth](-45:.5)arc(-45:135:.5);
\draw[](45:.6)node[rotate=-45,font=\tiny]{$\pi/2$};
\end{scope}
\draw(2.8,0)node{$\xrightarrow[\text{4-covering}]{\text{branched}}$};
\begin{scope}[shift={(5.5,0)},scale=.4]
\foreach \j in {0,...,8}{\draw (40*\j:5) coordinate  (w\j) edge[very thick] (40+40*\j:5);}
\draw[blue,thick](w0)to(w0)to[bend left=-15](w3)to[bend left=-15](w6)to[bend left=-15](w0)
    (5,0).. controls +(160:3) and +(90:.5) ..(1.25,0).. controls +(-90:.5) and +(200:3) ..(5,0)
    (5,0).. controls +(140:5) and +(90:1) ..(-1.25,0).. controls +(-90:1) and +(220:5) ..(5,0);
\foreach \j in {0,...,8}{\draw(w\j)\nn;}
\draw[red,font=\tiny](-4,0)\ww node[above]{$4$}to(0,0)\ww node[above]{$2$}to(2.5,0)\ww node[right]{$1$}
    (45:4)\ww node[above]{$4$}to[bend left=-30](-2.5,0)\ww node[above]{$4$}to[bend left=-30](-45:4)\ww node[below]{$4$};
\end{scope}
\end{tikzpicture}
\caption{A mixed-angulation with dual S-graph of the disc $\sow$ (on the right) arising from folding a 4-angulation $\widetilde{\mathbb A}$ of a tours $\widetilde{\sow}$ with one boundary component (on the left, where the opposite orange/green edges are glued together)}\label{fig:CY4}
\end{figure}

%The quiver $Q(\Sgh,4)^{\on{rd}}$ describing the dg-category $G(\Sgh,4)^{\on{rd}}$ is depicted in \Cref{fig:Q3}.
The quiver for the dual RGB algebra $A(\Sgh,4)^!$ is shown in \Cref{fig:Q3}.

\begin{figure}[!ht]\centering
\begin{tikzcd}[column sep=huge, row sep=huge]
& 3 \arrow[ld, "{\alpha_{3,2}^0}" description] \arrow[rd, "{\alpha_{3,4}^0}", bend left] \arrow[dd, "{\alpha_{3,1}^0}", bend left=15] \arrow["\sigma_3"', loop, distance=2em, in=125, out=55]  &                                                                                                                                                                                                                                                                &  &                                                                                                                                                                                                                                                                                                                                              \\
2 \arrow[ru, "{\alpha_{2,3}^0}", bend left] \arrow[rd, "{\alpha_{2,1}^0}" description] \arrow[rr, "{\alpha_{2,4}^0}", bend left=15] \arrow["\sigma_2"', loop, distance=2em, in=215, out=145] &                                                                                                                                                                          & 4 \arrow["{\alpha_{4,4}^1}"', loop, distance=4em, in=305, out=250] \arrow[lu, "{\alpha_{4,3}^0}" description] \arrow[ld, "{\alpha_{4,1}^0}", bend left] \arrow[ll, "{\alpha_{4,2}^0}", bend left=15] \arrow[rr, "{\alpha_{4,5}^0}", bend left=15] \arrow[rr, "{\alpha_{4,5}^1}", bend left=45] \arrow["\sigma_4"', loop, distance=4em, in=95, out=50] &  & 5 \arrow[ll, "{\alpha_{5,4}^0}", bend left=15] \arrow[ll, "{\alpha_{5,4}^1}", bend left=45] \arrow["{\alpha_{5,5}^1}"', loop, distance=3em, in=295, out=235] \arrow["{\alpha_{5,5}^2}"', loop, distance=3em, in=0, out=305] \arrow["{\alpha_{5,5}^3}"', loop, distance=3em, in=65, out=10] \arrow["\sigma_5"', loop, distance=3em, in=125, out=70] \\
& 1 \arrow[ru, "{\alpha_{1,4}^0}" description] \arrow[lu, "{\alpha_{1,2}^0}", bend left] \arrow[uu, "{\alpha_{1,3}^0}", bend left=15] \arrow["\sigma_1"', loop, distance=2em, in=305, out=235] &                                                                                                                                                                                                                                                                &  &
\end{tikzcd}
\caption{The quiver for $A(\Sgh,4)^!$ corresponding to the right picture of Figure~\ref{fig:CY4}.}
\label{fig:Q3}
\end{figure}
\end{example}

\begin{remark}
In \Cref{ex:quot1}, there is a branched 4-covering of the mix-angulation $\mathbb A$ of $\sow$
as shown in \Cref{fig:CY4}.
This is also true in general, cf. the proposition below.
\end{remark}

\def\gp{K}
\begin{proposition}\label{pp:covering}
For any mixed-angulation $\mathbb{A}$ (with dual S-graph $\Sgh$) of $\sow$,
there is a branching covering $p\colon\widetilde{\sow}\to\sow$ such that $\mathbb{A}$ lifts to an $n$-angulation\footnote{By which we mean a mixed-angulation consisting of $n$-gons and $\infty$-gons.} $\widetilde{\mathbb A}$ (with dual S-graph $\widetilde{\Sgh}$), where the quotient group is a finite group. On the level of associated dg-algebras,
$A(\Sgh,n)$ and the dual $A(\Sgh,n)^!$ are quotients of %the quiver $Q(\widetilde{\Sgh},n)$/generalized relative Ginzburg dg-algebra $G(\widetilde{\Sgh},n)$ associated to $\widetilde{\mathbb A}$.
$A(\widetilde{\Sgh},n)$ and the dual $A(\widetilde{\Sgh},n)^!$, respectively.
\end{proposition}
\begin{proof}
Let $\Delta^\circ$ be the subset of $\Delta$, consisting of interior singular points, i.e. 
points with finite weights/degrees. Consider the fundamental group $\pi_1(\sow\setminus\Delta^\circ)$ for any chosen base point $X$ in the interior of $\sow\setminus(\M\cup\Delta^\circ)$.
This group contains a free subgroup generated by the loops $l_x$ around the interior singular points $x\in\Delta^\circ$. Take the subgroup of $\pi_1(\sow, X)$, generated by $l_x^{\frac{n}{d(x)}}$, which determines a regular covering of $\sow\setminus\Delta^\circ$ with covering group $\gp=\prod_{x\in\Delta^\circ}\ZZ/{\frac{n}{d(x)}}$.
Such a $\gp$-covering extends to a branched $\gp$-covering 
\[
    p\colon\widetilde{\sow}\to\sow,
\]
branching at points in $\Delta^\circ$. 
The mixed-angulation $\mathbb A$ lifts to a mixed-angulation $\widetilde{\mathbb A}$.
By examining the degree of lifts of points in $\Delta^\circ$ in $\widetilde{\mathbb A}$, 
one sees that they all equal $n$. This gives the desired $n$-angulation $\widetilde{\mathbb A}$. The stated relation between the quivers and dg-algebras is now straightforward to see.
\end{proof}

%=========================================================
\section{RGB algebras from $A_\infty$-deformations}%Deformation of the Fukaya category of a surface}
\label{sec_deffuk}
%=========================================================

This section is organized as follows.
In \Cref{subsec_construction} we define a deformation of the Fukaya category of a surface over $\mathbf k[[t]]$ in the formalism of curved $A_\infty$-categories.
In \Cref{subsec_transfer} we determine a component of the space of stability conditions of the newly constructed categories in terms of moduli spaces of quadratic differentials. This is based on transfer of stability conditions along an adjunction with the Fukaya category of the surface.
Finally, in \Cref{subsec_sgraphs} we restrict to the case of quadratic differentials with infinite flat area and establish the relation with RGB algebras and thus the schober construction from \Cref{subsec:BGAschober}.

\subsection{Construction of the category}
\label{subsec_construction}

As a starting point, we review the construction of the (partially wrapped) Fukaya category of a surface following~\cite{HKK17}.
The input data is a \textit{graded marked surface} (in the sense below) and a choice of ground field $\mathbf k$.

\begin{definition}\label{def:gradedmarkedsurface}
    A \emph{graded marked surface} is a triple $(S,M,\nu)$ where
    \begin{enumerate}
    \item $S$ is a compact, oriented surface, possibly with boundary $\partial S$.
    \item $M\subset S$ is a finite subset of marked points. These can be both on the boundary and/or the interior of $S$. We require $M$ to be non-empty and every component of $\partial S$ to contain at least one marked point.
    \item $\nu$ is a grading structure on $S\setminus M$, i.e.~a section of the projectivized tangent bundle $\mathbb P(TS)$ of $S$ over $S\setminus M$.
\end{enumerate}
\end{definition}

The first step in the construction involves cutting $S$ along \textit{arcs} into polygons whose vertices belong to $M$.
This is an auxiliary choice, in the sense that the Fukaya category is independent of it, up to quasi-equivalence.

\begin{definition}
    An \emph{arc system} on a graded marked surface $(S,M,\nu)$ is given by a finite collection, $\mathbb X$, of immersed compact intervals $X\colon [0,1]\to S$ so that
    \begin{enumerate}
        \item Endpoints of arcs belong to $M$.
        \item Arcs can intersect themselves and each other only in the endpoints.
        \item Each $p\in M$ is the endpoint of at least one arc and all arcs starting at $p$ should point in different directions in $T_pS$.
        \item The collections of all arcs in $\mathbb X$ cuts $S$ into polygons.
    \end{enumerate}
    Furthermore, we assume that a grading has been chosen for each arc.
\end{definition}

In order to define the $A_\infty$-structure, it is convenient to consider the \textit{real blow-up} of $S$ in $M$, which is a surface with corners $\widehat{S}$, together with a map $\pi\colon \widehat S\to S$.
The surface $\widehat S$ is constructed by replacing each $p\in M\cap \partial S$ by an interval connecting a pair of corners, and replacing each $p\in M\cap \mathrm{int}(S)$ by a new boundary circle.
We refer to $\pi^{-1}(M)\subset \partial\widehat S$, i.e.~the ``new'' part of the boundary, as the \textit{marked boundary}.
We can extend $\pi^{*}\nu$ to $\widehat S$ after possibly perturbing $\nu$ near $M$.
The arcs in a given arc system $\mathbb X$ then lift to disjoint embedded intervals in $\widehat S$.

\begin{definition}
    A \emph{boundary path} in $\widehat S$ is an immersed path $a\colon [0,1]\to \pi^{-1}(M)\subset \partial\widehat S$, up to reparametrization, which follows the boundary in the direction opposite to its induced orientation, i.e.~so that the surface lies to the right.
\end{definition}

A boundary path $a$ which starts at an arc $X\in\mathbb X$ and ends at an arc $Y\in\mathbb X$ has an integer degree $|a|\coloneqq i(X,a)-i(Y,a)\in\ZZ$, where an arbitrary grading on $a$ has been chosen.
This is independent on the choice of grading on $a$ and additive under concatenation.

While boundary paths will be used in the definition of morphisms, structure constants come from counting immersed $2d$-gons in the following sense.

\begin{definition}
    For $d\in\mathbb Z_{>0}$ an \emph{immersed $2d$-gon} is an immersion $\psi\colon D\to\widehat S$ from a $2d$-gon $D$ so that the edges of $D$ are mapped, alternatingly, to arcs in $\mathbb X$ and boundary paths between these arcs.
    The resulting cyclic sequence of boundary paths, $a_d$, $a_{d-1}$, \ldots, $a_1$, as one goes counter-clockwise around the boundary of the polygon, is called the \emph{associated cyclic sequence}.
\end{definition}

The associated cyclic sequence $a_d,a_{d-1},\ldots,a_1$ in fact determines the immersed $2d$-gon, up to reparametrization. Moreover, $|a_d|+\ldots+|a_1|=d-2$, as shown in \cite{HKK17}.
Next, define an $A_\infty$-category whose objects are the chosen arcs. This will be a full subcategory of the Fukaya category $\mathcal F(S,M,\nu)$ which generates it.

\begin{definition}
    Let $(S,M,\nu)$ be a graded marked surface with arc system $\mathbb X$.
    Define a strictly unital $A_\infty$-category $\mathcal F_{\mathbb X}=\mathcal F_{\mathbb X}(S,M,\nu)$ over $\mathbf k$ with
    \begin{itemize}
        \item $\mathrm{Ob}(\mathcal F_{\mathbb X})=\mathbb X$
        \item $\mathrm{Hom}(X,Y)$ is the vector space with basis consisting of boundary paths starting at $X$ and ending at $Y$, and in addition, if $X=Y$, the identity morphism. The $\mathbb Z$-grading on $\mathrm{Hom}(X,Y)$ comes from the degree of boundary paths.
        \item There are three types of terms which contribute to the structure maps, $\mathfrak m_d$:
        \begin{enumerate}
            \item If $a_d,\ldots,a_1$ is a cyclic sequence of boundary paths associated with an immersed $2d$-gon, then this contributes a term $b$ to $\mathfrak m_d(ba_d,a_{d-1},\ldots,a_1)$ and a term $(-1)^{|b|}b$ to $\mathfrak m_d(a_d,\ldots,a_2,a_1b)$.
            \item If $a,b$ are boundary paths such that $a$ starts where $b$ ends, then $\mathfrak m_2(a,b)$ has a term $(-1)^{|b|}ab$ where $ab$ is the concatenated path.
            \item $\mathfrak m_2(a,1)=a=(-1)^{|a|}\mathfrak m_2(1,a)$
        \end{enumerate}
    \end{itemize}
\end{definition}

It is shown in~\cite{HKK17} that $\mathcal F_{\mathbb X}$ is an $A_\infty$-category over $\mathbf k$.
Moreover, the category $\mathrm{Tw}^+(\mathcal F_{\mathbb X})$ of one-sided twisted complexes over $\mathcal F_{\mathbb X}$ --- which represents the closure under shifts, direct sums, and cones --- is up to quasi-equivalence independent of the choice of $\mathbb X$.
More precisely, the classifying space of arc systems is contractible, and $\mathrm{Tw}^+(\mathcal F_{\mathbb X})$ are the fibers of a local system of (pre-)triangulated $A_\infty$-categories on this space.

\begin{definition}
The \emph{Fukaya category of a graded marked surface} $(S,M,\nu)$ is defined as
\begin{equation}
\mathcal F(S)=\mathcal F(S,M,\nu)\coloneqq \mathrm{Tw}^+(\mathcal F_{\mathbb X})\,,
\end{equation}
for any choice of arc system $\mathbb X$.
\end{definition}

The next step, following~\cite{Hai21}, is to enhance the $\mathbb Z$-grading on morphisms in $\mathcal F(S)$ to a $\ZZ\times \ZZ/2$-grading such that structure maps are even with respect to the additional grading.
To this end, let $\Sigma\to S\setminus M$ be the double cover on which $\nu$ becomes an oriented foliation, i.e.~the fiber over a given $p\in S\setminus M$ is the set of orientations of the line $\nu(p)\subset T_pS$.
When choosing an arc system $\mathbb X$, we now also assume that a lift of the interior of each arc to $\Sigma$ has been chosen.
Then one can define the \emph{parity}, $\pi(a)\in\ZZ/2$, of a boundary path $a\colon [0,1]\to\widehat S$ from an arc $X$ to an arc $Y$ to be even (resp. odd) if the endpoints of the lifts of the two arcs are connected by a lift of $a$ to $\Sigma$ (resp. not connected by such a lift).
This defines an additional $\ZZ/2$-grading on $\Hom(X,Y)$ which is compatible with the structure maps.

For later purposes, we note that the choice of grading and lift to $\Sigma$ of an arc $X$ determine an orientation of $X$ as follows: On the one hand the foliation $\nu$ is oriented along $X$ by the choice of lift to $\Sigma$, on the other hand, the grading provides a homotopy class of paths in $\mathbb P(T_pS)$ between $\nu(p)$ and the tangent space $T_pX$ to $X$ for a given point $p$ on $X$, which thus receives an orientation.

If $a$ is a boundary path from an arc $X_-$ to an arc $X_+$, both given their induced orientations as above, then we define:
\begin{equation}
    \varepsilon_\pm(a)\coloneqq \begin{cases}
        0 & X_\pm \text{ points away from } a \\
        1 & X_\pm \text{ points towards } a
    \end{cases}
\end{equation}
Then $|a|+\pi(a)=\varepsilon_-(a)+\varepsilon_+(a)\mod 2$.

There are a-priori two different categories of twisted complexes attached to the $\ZZ\times\ZZ/2$-graded version of $\mathcal F_{\mathbb X}$.
The first, denoted $\mathrm{Tw}^+_{\ZZ/2}(\mathcal F_{\mathbb X})$, is again $\ZZ\times\ZZ/2$-graded, but closed under cones over \textit{homogeneous} (either even or odd) closed morphisms only.
The second, denoted $\mathrm{Tw}^+(\mathcal F_{\mathbb X})$, is the usual construction of one-sided twisted complexes, ignoring the additional grading, and thus has cones of closed morphisms with both an even and odd component.
Equivalently, viewing the additional $\ZZ/2$-grading as an involutive functor, $I$, on $\mathcal F_{\mathbb X}$ which fixes objects and acts by $(-1)^{\pi(a)}$ on morphisms, $\mathrm{Tw}^+_{\ZZ/2}(\mathcal F_{\mathbb X})$ is the subcategory of $\mathrm{Tw}^+(\mathcal F_{\mathbb X})$ of objects which are fixed under the functor which is the natural extension of $I$ to the category of twisted complexes.
Nevertheless, it turns out that the two categories are quasi-equivalent.

\begin{lemma}
The inclusion of $\mathrm{Tw}^+_{\ZZ/2}(\mathcal F_{\mathbb X})$ into $\mathrm{Tw}^+(\mathcal F_{\mathbb X})$ is a quasi-equivalence of $A_\infty$-categories.
Thus, the involutive functor defined by the additional $\ZZ/2$-grading fixes all isomorphism classes of objects in $\mathcal F(S,M,\nu)$.
\end{lemma}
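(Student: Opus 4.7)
The plan is to verify the two parts of being a quasi-equivalence. Quasi-fully-faithfulness is automatic: the morphism complexes in $\mathrm{Tw}^+_{\ZZ/2}(\mathcal F_{\mathbb X})$ are those in $\mathrm{Tw}^+(\mathcal F_{\mathbb X})$ equipped with an extra $\ZZ/2$-grading that disappears on restriction, so the inclusion is fully faithful on morphism complexes. The bulk of the work is essential surjectivity on the homotopy category.

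The central tool is a doubling construction. Given $X = (A,\delta) \in \mathrm{Tw}^+(\mathcal F_{\mathbb X})$, split $\delta = \delta_0 + \delta_1$ into its $\ZZ/2$-even and odd parts. Parsing the Maurer--Cartan equation for $\delta$ by $\ZZ/2$-parity, which is possible because every structure map $\mathfrak m_d$ is $\ZZ/2$-even, yields two separate equations which together imply
\[
\tilde\delta^2 = 0 \qquad \text{for} \qquad \tilde\delta \coloneqq \begin{pmatrix}\delta_0 & \delta_1 \\ \delta_1 & \delta_0\end{pmatrix}\colon A\oplus A\longrightarrow A\oplus A\,.
\]
Equipping the two copies of $A$ with $\ZZ/2$-grades $0$ and $1$ makes the $\ZZ/2$-parities of the matrix entries match the source--target grade differences, so the twisted complex $\tilde X \coloneqq (A\oplus A,\tilde\delta)$ lies in $\mathrm{Tw}^+_{\ZZ/2}(\mathcal F_{\mathbb X})$. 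Diagonalizing $\tilde\delta$ via $\tfrac{1}{\sqrt 2}\bigl(\begin{smallmatrix}1 & 1\\ 1 & -1\end{smallmatrix}\bigr)$, which requires $\mathrm{char}\,\mathbf k\neq 2$, exhibits an isomorphism $\tilde X \simeq X\oplus I(X)$ in $\mathrm{Tw}^+(\mathcal F_{\mathbb X})$, where $I(X) = (A,\delta_0-\delta_1)$ is the image of $X$ under the involutive functor $I$ extended to twisted complexes. This shows that $X\oplus I(X)$ is quasi-isomorphic in $\mathrm{Tw}^+$ to an object of $\mathrm{Tw}^+_{\ZZ/2}$.

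The main obstacle is strengthening this ``doubled'' statement to essential surjectivity for $X$ alone. I would approach this by placing $X$ directly inside $\mathrm{Tw}^+_{\ZZ/2}$ via an appropriate assignment of $\ZZ/2$-grades $\eta_i\in \ZZ/2$ to the summands $A_i[k_i]$ of $A$, chosen so that every nonzero matrix entry of $\delta$ has the required $\ZZ/2$-parity $\eta_i-\eta_j$. The formula $|a|+\pi(a)\equiv \varepsilon_-(a)+\varepsilon_+(a)\pmod{2}$ determines the parity of each boundary path from its $\ZZ$-degree and the orientations of its endpoint arcs, so a natural candidate is to take $\eta_i$ as a function of $k_i$ together with orientation data at a chosen endpoint of $A_i$; the strict upper-triangularity of $\delta$ makes its underlying directed graph acyclic, allowing the $\eta_i$ to be defined recursively, and the $\ZZ/2$-parity decomposition of the Maurer--Cartan equation should enforce consistency across distinct directed paths. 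Summands whose incoming differentials combine boundary paths starting at different corners of the underlying arc may require a local splitting into two copies indexed by corner, a modification that changes the representative but not the quasi-isomorphism class in $\mathrm{Tw}^+$. Verifying this consistency in full generality constitutes the technical heart of the argument. The second statement of the lemma then follows formally: for any $X\in \mathcal F(S,M,\nu)$ and quasi-isomorphism $X\simeq Y$ with $Y\in\mathrm{Tw}^+_{\ZZ/2}$, the object $Y$ is strictly fixed by $I$, whence $I(X)\simeq I(Y) = Y\simeq X$.
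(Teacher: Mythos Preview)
The paper does not give an argument; it simply cites \cite{Hai21} and remarks that the boundary case needs no new ideas. So the relevant comparison is with the argument in \cite{Hai21}, which proceeds via the geometric classification of indecomposable objects in $\mathcal F(S,M,\nu)$: every indecomposable is represented by a graded immersed curve (arc or loop) with an indecomposable local system, and for each such curve one writes down an explicit twisted complex over $\mathcal F_{\mathbb X}$ whose differential is $\ZZ/2$-homogeneous (the summands are the arcs crossed by the curve, and each entry of $\delta$ is a single boundary path, so one can propagate a $\ZZ/2$-grade along the curve consistently). Essential surjectivity then follows because every object is a finite direct sum of such indecomposables.

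Your doubling construction is correct and elegant, and it immediately yields that $X\oplus I(X)$ lies in the essential image; but this alone does not give essential surjectivity, since $\mathrm{Tw}^+_{\ZZ/2}(\mathcal F_{\mathbb X})$ is not a priori idempotent complete. You recognise this and propose to assign $\ZZ/2$-grades $\eta_i$ to the summands of an arbitrary twisted complex so that every nonzero entry of $\delta$ becomes homogeneous. This is exactly where the gap lies. A single summand $A_i$ has two ends, and entries of $\delta$ can attach at either end; the formula $|a|+\pi(a)\equiv\varepsilon_-(a)+\varepsilon_+(a)\pmod 2$ then forces incompatible values of $\eta_i$ as soon as two entries meet $A_i$ at opposite ends with the ``wrong'' orientation pattern. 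Your suggested fix, splitting such a summand into two copies indexed by end, changes the underlying graded object and does not in general produce a quasi-isomorphic twisted complex: you would need to redistribute all differentials through the split copy and check the Maurer--Cartan equation survives, which is essentially rebuilding the classification of indecomposables by hand. In short, the recursive assignment of $\eta_i$ works cleanly precisely when the twisted complex already has the ``string'' or ``band'' shape coming from a single curve, and the route to that shape is the classification theorem that \cite{Hai21} invokes.
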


\begin{proof}
This is shown in \cite{Hai21} in the case when $\partial S=\emptyset$, however that assumption is unnecessary and the same proof works in the case with boundary.
\end{proof}

The definition of the Fukaya category $\mathcal F(S,M,\nu)$ is based on counting immersed polygons in $\widehat S$.
Instead, we could also count immersed polygons in $S$, or equivalently immersed polygons ``with holes'' in $\widehat S$, where each hole corresponds to a point in the polygon in $S$ which maps to a point in $M$.
Algebraically, this turns out to give a deformation of $\mathcal F(S,M,\nu)$ over $\mathbf k[[t]]$, where the degree of $t$ has to be a positive multiple of an integer which depends on the behaviour of $\nu$ near $M$.
In the following we discuss all this in more detail.

\begin{definition}
Let $(S,M,\nu)$ be a graded marked surface, $M'\subset M\cap\mathrm{int}(S)$, and $n\in\ZZ$.
The pair $(M',n)$ is \emph{compatible} if for every $p\in M'$ there is a positive integer $m$ with $n=\mathrm{ind}_\nu(p)m$.
(The index $\mathrm{ind}_\nu(p)$ is the winding number of $\nu$ around $p$.)
\end{definition}

Note that in the situation of the definition above, $n/\mathrm{ind}(p)$ is a well-defined positive integer unless $\mathrm{ind}(p)=0$ in which case $n=0$ also. In that case we define $n/\mathrm{ind}(p)\coloneqq 1$.

Fix a graded marked surface $(S,M,\nu)$ and a compatible pair $(M',n)$ in the above sense.
The next step is to construct a curved, $\ZZ\times\ZZ/2$-graded $A_\infty$-category over $\mathbf k[[t]]$, $|t|=2-n$, $\pi(t)=n\mod 2$, whose base change to $\mathbf k$ is $\mathcal F_{\mathbb X}$.
Here, \textit{curved} means that besides the $A_\infty$-operations $\mathfrak m_d$, $d\geq 1$, there are also elements $\mathfrak m_0\in\mathrm{Hom}^2(X,X)$ for every object $X$, satisfying a generalization of the usual $A_\infty$-category equations. Morphism spaces are required to be topologically free $\mathbf k[[t]]$-modules.
We emphasize also that $\mathbf k[[t]]$ is complete and commutative in the $\ZZ\times\ZZ/2$-graded sense, and thus its underlying ungraded algebra is the algebra of polynomials if $|t|\neq 0$, or the algebra of formal power series if $|t|=0$.
We refer the reader to~\cite{Hai21} for the details on the definition of such $A_\infty$-categories.

The following definition generalizes the kinds of immersed polygons in the definition of $\mathcal F_{\mathbb X}$ --- the special case $k=0$.

\begin{definition}
Let $D$ be a $2d$-gon with $k\geq 0$ open disks removed from its interior. Denote by $e_1,f_1,e_2,f_2,\ldots,e_d,f_d$ the edges of $D$ in clockwise order, and by $c_1,\ldots,c_k$ the boundaries of the removed disks.
An \emph{immersed $2d$-gon with $k$ holes} is an immersion $\psi\colon D\to \widehat S$ so that each $e_i$ is mapped to an arc in $\mathbb X$, each $f_i$ is mapped to a boundary path, $a_i$, in $\widehat S$, and each $c_i$ is mapped to a component $\pi^{-1}(p)$ of $\partial\widehat S$ for some $p\in M'$ in a $d/\mathrm{ind}(p)$-to-1 covering.
The \emph{associated cyclic sequence} of boundary paths is $a_d,a_{d-1},\ldots,a_1$.
\end{definition}

\begin{lemma}
If $a_d,a_{d-1},\ldots,a_1$ is the associate cyclic sequence of some immersed $2d$-gon with $k$ holes $\psi\colon D\to\widehat S$, then
\begin{equation}
    |a_1|+\ldots+|a_d|=d-2+(2-n)k, \qquad \pi(a_1)+\ldots+\pi(a_d)=nk\mod 2.
\end{equation}
\end{lemma}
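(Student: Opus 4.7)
The plan is to interpret both identities as winding-number invariants of the immersion $\psi\colon D\to\widehat S$ and to compute them by a Poincaré--Hopf / Gauss--Bonnet-style argument applied to the pulled-back line field $\psi^{\ast}\nu$ on $D$. This generalises the standard argument (see~\cite{HKK17}) for the case $k=0$, in which one obtains $\sum|a_i|=d-2$.

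First I would pull $\nu$ back along $\psi$ to a line field on $D$, after a small perturbation of $\nu$ near $M$ so that $\psi^{\ast}\nu$ extends continuously across the parts of $\partial D$ mapping to the marked boundary of $\widehat S$. Since $\psi$ is an immersion and $\nu$ is non-singular on $S\setminus M$, the pulled-back field is a non-singular line field on the whole of $D$. By Poincaré--Hopf on the oriented surface with boundary $D$, the total signed winding of $\psi^{\ast}\nu$ along $\partial D$, measured relative to a fixed reference framing, is determined by the Euler characteristic $\chi(D)=1-k$.

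Next I would decompose this total winding over the components of $\partial D$. On the outer $2d$-gon: each of the $d$ arc-edges contributes zero winding, since by definition an arc in $\mathbb{X}$ together with its chosen grading is tangent to $\nu$; each boundary-path-edge $f_i$ contributes exactly $|a_i|$, by the definition $|a|=i(X,a)-i(Y,a)$ of the degree; and the $2d$ corners contribute a universal offset independent of $\psi$. Each inner boundary $c_i$, being an $n/\mathrm{ind}(p_i)$-fold cover of the blow-up circle $\pi^{-1}(p_i)$ around which $\nu$ has index $\mathrm{ind}(p_i)$, contributes a winding equal to $\pm n$, independently of the outer boundary data. Comparing the $k=0$ case (which fixes the universal corner offset and reproduces $\sum|a_i|=d-2$) with the $k>0$ case (where $\chi(D)$ drops by $k$ and the inner boundaries contribute a total of $-nk$) forces $\sum|a_i|=d-2+(2-n)k$.

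For the parity statement I would run the same computation modulo $2$ on the orientation double cover $\Sigma\to S\setminus M$ of $\nu$. The $\pi(a_i)$ record whether the chosen lifts of the source and target arcs of $a_i$ are joined by a lift of $a_i$ to $\Sigma$; the arcs and corners contribute $0\bmod 2$, the outer boundary paths contribute $\sum\pi(a_i)\bmod 2$, and each inner boundary $c_i$ contributes $n\bmod 2$, since an $n/\mathrm{ind}(p_i)$-fold cover of a loop around a point at which $\nu$ has index $\mathrm{ind}(p_i)$ lifts to $\Sigma$ if and only if $n$ is even. This yields $\sum\pi(a_i)\equiv nk\bmod 2$. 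The main obstacle is to pin down signs and the corner correction in the Poincaré--Hopf computation precisely; the well-known $k=0$ case together with simple cross-checks (e.g.~a bigon with one hole, or a $2d$-gon enclosing a single puncture of index $n$) will be used to fix these conventions.
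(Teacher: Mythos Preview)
Your proposal is correct and follows essentially the same approach as the paper: pull back $\nu$ along $\psi$, observe that each inner boundary circle has index $n$ with respect to $\psi^{\ast}\nu$ (since it is an $n/\mathrm{ind}(p)$-fold cover of the blow-up circle at a point of index $\mathrm{ind}(p)$), and then reduce to the known $k=0$ computation. The paper phrases the reduction slightly differently---computing that a simple loop in $D$ enclosing all $k$ holes has index $2+(n-2)k$, which feeds directly into the $k=0$ formula---whereas you phrase it as a Poincar\'e--Hopf balance using $\chi(D)=1-k$; these are equivalent bookkeeping devices for the same index computation. One small inaccuracy: the arc edges are not literally tangent to $\nu$; rather, their gradings furnish the reference framing relative to which the winding is measured, so their contribution is zero by definition of $|a_i|$. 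This does not affect your argument.
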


\begin{proof}
This is a slight generalization of \cite[Lemma 3.14]{Hai21}.
The key point is that the foliation $\psi^*\nu$ on $D$ satisfies $\mathrm{ind}(c)=n$ if $c$ is the boundary of one of the $k$ removed disks.
In the case $n=2$ we get no correction from the holes, since $\psi^*\nu$ extends to a smooth foliation on the entire $2d$-gon (i.e.~without the holes) in that case.
In general, the Poicar\'e--Hopf theorem yields $\mathrm{ind}(c)=2+(n-2)k$ for a simple loop $c$ which goes around all the $k$ holes in $D$ in counterclockwise direction.
\end{proof}

\begin{lemma}
Fix a cyclic sequence $a_d,a_{d-1},\ldots,a_1$ of boundary paths and $k\geq 0$, then there are finitely many $2d$-gons with $k$ holes, up to reparametrization, with associated cyclic sequence the given one.
\end{lemma}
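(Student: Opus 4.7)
My plan is to reduce the finiteness statement to the $k=0$ case (which is implicit in the well-definedness of the structure maps $\mathfrak m_d$ in the construction of $\mathcal F_{\mathbb X}$ following~\cite{HKK17}) by passing to a local branched cover of $\widehat S$.

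First, I observe that fixing the cyclic sequence $a_d,\ldots,a_1$ determines the restriction of $\psi$ to the outer $2d$-gon boundary of $D$ up to reparametrization. Each of the $k$ inner boundary circles of $D$ must map to some circle $\pi^{-1}(p_j)$ with $p_j\in M'$ as a $d/\mathrm{ind}(p_j)$-fold cover, so the assignment $j\mapsto p_j$ lies in the finite set $(M')^k$. It therefore suffices, for each fixed such tuple $(p_1,\ldots,p_k)$, to establish finiteness of immersions $\psi$ with this prescribed boundary behavior.

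For a fixed tuple, I will construct a branched cover $q\colon \widetilde U\to U$ of a small neighborhood $U\subset\widehat S$ of $\psi(D)$, branched at $p_1,\ldots,p_k$ with ramification index $d/\mathrm{ind}(p_j)$ at $p_j$. Because each inner boundary of $D$ wraps around $\pi^{-1}(p_j)$ as precisely a $d/\mathrm{ind}(p_j)$-fold cover, the standard lifting criterion is satisfied and $\psi$ lifts uniquely (after a choice of basepoint) to an immersion $\tilde\psi\colon D\to\widetilde U$ sending each inner boundary to an embedded loop around the unique preimage $\tilde p_j$. Capping off the $k$ holes of $D$ by disks produces a $2d$-gon $D'$, and $\tilde\psi$ extends smoothly across each capping disk (since near the inner boundary, $\tilde\psi$ is already a local diffeomorphism onto an annular neighborhood of $\tilde p_j$) to an immersion $\psi'\colon D'\to\widetilde U$ mapping $k$ marked interior points to $\tilde p_1,\ldots,\tilde p_k$. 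The associated cyclic sequence of $\psi'$ equals $a_d,\ldots,a_1$.

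By the $k=0$ finiteness of immersed $2d$-gons in $\widetilde U$ with prescribed cyclic sequence, only finitely many such $\psi'$ exist up to reparametrization; conversely, removing the capping disks and composing with $q$ recovers $\psi$ uniquely from $\psi'$. Combined with the finiteness of boundary-point tuples, this yields the claim. The main technical subtlety is to set up the local branched cover cleanly and to verify that the assignment $\psi\mapsto\psi'$ is a bijection onto pairs consisting of a $k=0$ immersion $\psi'$ together with a labeling of $k$ of its interior marked points by the $\tilde p_j$; both hinge on the fact that the ramification index $d/\mathrm{ind}(p_j)$ is precisely engineered to match the prescribed winding degree of each hole boundary.
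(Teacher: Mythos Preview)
Your approach is genuinely different from the paper's, which proceeds by a direct combinatorial count: pull back the tessellation of $\widehat S$ by the $\mathbb X$-polygons to $D$; the fixed cyclic sequence and the number $k$ of holes together bound the total number of $\mathbb X$-polygon edges appearing on $\partial D$ (counted with multiplicity), which in turn bounds how many $\mathbb X$-polygons can tile $D$, and hence bounds the number of immersions.

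Your branched-cover reduction is an attractive idea but, as written, has two genuine gaps.

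First, the neighborhood $U$ is taken to be ``a small neighborhood of $\psi(D)$'', which depends on the very immersion $\psi$ you are trying to count. Different $\psi$'s with the same cyclic sequence can have arbitrarily large images (indeed $\psi(D)$ may be all of $\widehat S$), so there is no single $U$, and the $k=0$ finiteness ``in $\widetilde U$'' does not aggregate into finiteness of the original problem. This is fixable: replace $U$ by all of $S$ and use a \emph{global} finite branched cover, branched over the points of $M'$ with local degree $n/\mathrm{ind}(p)$; such a cover exists (compare the paper's \Cref{pp:covering}).

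Second, even after lifting to a global cover and capping off the holes, the extended immersion $\psi'\colon D'\to\widetilde S$ passes through the branch points $\tilde p_j$ in its interior. These are still singular points of the lifted grading structure (they have index $n$), hence marked points of $(\widetilde S,\widetilde M)$. Consequently $\psi'$ does \emph{not} define an immersed $2d$-gon in the blow-up $\widehat{\widetilde S}$ in the sense used for $\mathcal F_{\widetilde{\mathbb X}}$, and the ``$k=0$ finiteness'' from \cite{HKK17} does not literally apply. What you actually need is the more basic topological fact that an immersion of a \emph{disk} into a surface is determined by the restriction to its boundary; this is true (lift to the universal cover and use uniqueness there), but it is a different statement from the Fukaya-categorical one you invoke, and you should state and justify it explicitly.

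With both fixes in place your argument would go through. The paper's counting argument is shorter and avoids constructing any auxiliary cover; your approach, once repaired, has the conceptual appeal of explaining the holes as an artifact of the quotient by the covering group.
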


Note that in the case $n\neq 2$, $k$ is already determined by the $a_i$'s by the previous lemma, so does not need to be fixed.

\begin{proof}
The proof is essentially the same as in \cite[Lemma 3.15]{Hai21}, with the only difference that the factor ``3'' appearing there needs to be replaced by $n$ if $n\neq 0$ and 1 if $n=0$.
The basic idea is as follows: Let $D$ be some $2d$-gon with $k$ holes and associated cyclic sequence the given one. Our assumptions fix the number of edges of $\mathbb X$-polygons, counted with multiplicity, which appear on the boundary of $D$. But this implies a bound on the number of $\mathbb X$-polygons tessellating $D$.
\end{proof}

\begin{definition}
\label{def_curvedainfty}
Let $(S,M,\nu)$ be a graded marked surface, $\mathbb X$ an arc system, $M'\subset M$, and $n\in\ZZ$ such that $(M',n)$ is compatible.
Define a curved, $\ZZ\times\ZZ/2$-graded $A_\infty$-category $\mathcal A_{\mathbb X}$ over $\mathbf k[[t]]$, $|t|=2-n$, $\pi(t)=n\mod 2$, as follows.
\begin{itemize}
    \item $\mathrm{Ob}(\mathcal A_{\mathbb X})\coloneqq \mathrm{Ob}(\mathcal F_{\mathbb X})=\mathbb X$
    \item $\mathrm{Hom}_{\mathcal A_{\mathbb X}}(X,Y)\coloneqq \mathrm{Hom}_{\mathcal F_{\mathbb X}}(X,Y)[[t]]$
    \item Structure maps: Modulo $t$, these are just the structure maps for $\mathcal F_{\mathbb X}$. There are the following additional terms in higher powers of $t$:
    \begin{enumerate}
        \item For each $X\in\mathbb X$ and endpoint $p$ of $X$ with $p\in M'$ there  is a term $\pm c_{p,X}t$ of $\mathfrak m_0\in\mathrm{Hom}^2(X,X)$, where $c_{p,X}$ is the closed boundary path which starts and ends at $X$ and winds $n/\mathrm{ind}(p)$ times around the component of $\partial \widehat S$ corresponding to $p\in M'$.
        The sign is $+1$ (resp. $(-1)^{n}$) if $X$ points towards (resp. away from) $p$ with respect to the natural orientation coming from the grading of $X$ and lift to $\Sigma$.
        \item Each immersed $2d$-gon with $k$ holes and associated cyclic sequence $a_d$, $a_{d-1}$, \ldots, $a_1$ contributes: a) a term $(-1)^{nk\varepsilon_-(a_1)}1_Xt^k$  to $\mathfrak m_d(a_d,\ldots,a_1)$, where $X$ is the arc where $a_1$ starts, b) a term $(-1)^{nk\varepsilon_-(a_1)}bt^k$ to $\mathfrak m_d(ba_d,\ldots,a_1)$, where $b$ is a boundary path which stars where $a_d$ ends, and c) a term $(-1)^{|b|+nk\varepsilon_-(b)}bt^k$ to $\mathfrak m_d(a_d,\ldots,a_1b)$, where $b$ is a boundary path which ends where $a_1$ starts.
    \end{enumerate}
\end{itemize}
\end{definition}

\begin{proposition}
$\mathcal A_{\mathbb X}=\mathcal A_{\mathbb X}(S,M,\nu,M',n)$ is a curved $A_\infty$-category over $\mathbf k[[t]]$.
\end{proposition}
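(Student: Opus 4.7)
The plan is to verify the curved $A_\infty$-relations for $\mathcal{A}_{\mathbb{X}}$ order by order in the deformation parameter $t$. Modulo $t$ the curvature $\mathfrak{m}_0$ vanishes and the structure reduces to the $\mathbb{Z}\times\mathbb{Z}/2$-graded enhancement of $\mathcal{F}_{\mathbb{X}}$, so the $t^0$-part of the relations is exactly the ordinary $A_\infty$-relation for $\mathcal{F}_{\mathbb{X}}$. This is verified in~\cite{HKK17} for the underlying $\mathbb{Z}$-graded category, and the $\mathbb{Z}/2$-enhancement is carried along as in~\cite{Hai21}; the closed-surface proof there extends without essential change to the present situation with non-empty boundary.

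For $t^k$ with $k\geq 1$, I would interpret each non-vanishing contribution to the $A_\infty$-relation as a pair $(P_{\mathrm{in}}, P_{\mathrm{out}})$ of immersed polygons with holes glued along a common boundary-path edge, under the convention that a curvature insertion $\pm c_{p,X}t$ is represented by a ``degenerate $2$-gon with one hole'' at $p$. Such a glued pair can equivalently be viewed as a single immersed $2(d+1)$-gon with $k$ holes equipped with a distinguished interior chord. Every such decorated configuration arises from exactly two pairs $(P_{\mathrm{in}}, P_{\mathrm{out}})$, corresponding to the two possible decompositions along the chord, and these two pairs contribute to two different terms of the $A_\infty$-relation. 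Cancellation of all $t^k$-contributions thus reduces to showing that, for every decorated glued configuration, the two resulting terms carry opposite overall signs.

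The main obstacle is this sign computation. It requires combining four sources of signs: (a) the Koszul signs of the $A_\infty$-convention for the total grading $|a|+\pi(a)\in\mathbb{Z}\oplus\mathbb{Z}/2$; (b) the parity of $t^k$, which equals $nk\bmod 2$; (c) the explicit prefactors $(-1)^{nk\varepsilon_-(\cdot)}$ appearing in \Cref{def_curvedainfty}; and (d) the $+1$ versus $(-1)^n$ distinction for the curvature term depending on whether an arc points towards or away from the puncture. Using the parity identity $|a|+\pi(a)\equiv\varepsilon_-(a)+\varepsilon_+(a)\pmod{2}$ together with the counting relation $|a_1|+\cdots+|a_d|=d-2+(2-n)k$ for cyclic sequences associated to $2d$-gons with $k$ holes, a careful bookkeeping reduces the sign comparison on each decorated configuration to the corresponding comparison in the $k=0$ case already established in~\cite{Hai21,HKK17}. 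The sign convention chosen for the curvature elements $c_{p,X}t$ in \Cref{def_curvedainfty} is precisely what is needed to make them cancel against immersed polygons with an extra hole around $p$, so that the curvature and ``holes'' contributions are treated uniformly in the geometric gluing argument.
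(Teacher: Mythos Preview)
Your proposal is correct and follows essentially the same approach as the paper: reduce to the argument in~\cite{Hai21} (gluing of immersed polygons with holes along a shared edge, with curvature terms treated as degenerate $2$-gons), and then verify the signs. The paper's own proof is much terser---it simply states that the argument is identical to~\cite{Hai21} except for the signs, which coincide with those of~\cite{Hai21} when $n$ is odd and simplify when $n$ is even, and omits the explicit sign check---whereas you spell out the geometric cancellation mechanism and the ingredients of the sign bookkeeping in more detail.
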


\begin{proof}
The proof is the same as in~\cite{Hai21} except for the signs.
More precisely, the signs are the same if $n$ is odd and simplify in the case where $n$ is even, so we will omit the (straightforward) checking of signs.
\end{proof}

We briefly recall some algebraic constructions from~\cite{Hai21}.
Given any curved $A_\infty$-category $\mathcal A$ over $\mathbf k[[t]]$ we obtain an uncurved one ($\mathfrak m_0=0$) by passing to the $A_\infty$-category of (not necessarily one-sided) twisted complexes $\mathrm{Tw}(\mathcal A)$.
An object of $\mathrm{Tw}(\mathcal A)$ is a pair $(X,\delta)$, where $X$ is an object in the closure of $\mathcal A$ under finite direct sums and shift, and $\delta\in\mathrm{Hom}^1(X,X)$ has $t^0$ term which is strictly upper-triangular and $\delta$ satisfies the $A_\infty$ Maurer--Cartan equation.
Base change from $\mathbf k[[t]]$ to $\mathbf k$ gives a functor
\begin{equation*}
    F\colon \mathrm{Tw}(\mathcal A)\to\mathrm{Tw}^+\left(\mathcal A\otimes_{\mathbf k[[t]]}\mathbf k\right)
\end{equation*}
which has a right adjoint, $G$, see \cite[Prop.~2.13]{Hai21}.
Moreover, the counit of the adjunction, $FG\to 1$, fits into an exact triangle of functors and natural transformations
\begin{equation}
\label{spherical_is_shift}
    [n-1,n]\longrightarrow FG\longrightarrow 1\longrightarrow [n,n]
\end{equation}
where $[n,k]$ is the shift in bi-degree $(n,k)\in\mathbb Z\times\mathbb Z/2$.
We denote by $\mathrm{Tors}(\mathcal A)$ the full triangulated subcategory of $\mathrm{Tw}(\mathcal A)$ which is generated by the image of $G$.
By restricting $F$ to $\mathrm{Tors}(\mathcal A)$, we obtain an adjunction between $\mathrm{Tw}^+\left(\mathcal A\otimes_{\mathbf k[[t]]}\mathbf k\right)$ and $\mathrm{Tors}(\mathcal A)$.

\begin{remark}
Using more geometric language, $\mathcal A$ is a family of $A_\infty$-categories over a formal disk (with coordinate in degree $2-n$).
Moreover, $\mathcal A_0\coloneqq \mathcal A\otimes_{\mathbf k[[t]]}\mathbf k$ is the central fiber of this family, $\mathrm{Tw}^+(\mathcal A_0)$ are objects living on the central fiber, and $\mathrm{Tors}(\mathcal A)$ are objects supported on the central fiber.
\end{remark}

We apply these algebraic constructions to the curved $A_\infty$-category $\mathcal A_{\mathbb X}$.
It turns out that $\mathrm{Tors}(\mathcal A_{\mathbb X})$ does not depend, up to coherent quasi-equivalence, on the choice of arc system $\mathbb X$.
This can be proven in the same way as in~\cite{HKK17,Hai21}.
Thus, we omit $\mathbb X$ from the notation in the following definition.

\begin{definition}
    $\mathcal C(S,n)=\mathcal C(S,M,\nu,M',n)\coloneqq \mathrm{Tors}(\mathcal A_{\mathbb X}(S,M,\nu,M',n))$ is the triangulated $A_\infty$-category associated with the quintuple $(S,M,\nu,M',n)$.
\end{definition}

By definition, $\mathcal A_{\mathbb X}\otimes_{\mathbf k[[t]]}\mathbf k=\mathcal F_\mathbb X$, so the right adjoint of the base change functor is a functor
\begin{equation}\label{G_functor}
    G\colon \mathcal F(S,M,\nu)=\mathrm{Tw}^+(\mathcal F_{\mathbb X})\to\mathrm{Tors}(\mathcal A_{\mathbb X})=\mathcal C(S,n).
\end{equation}

The following proposition gives an explicit description of $\mathrm{Ext}^i$'s between those objects in $\mathcal C(S,n)$ which correspond to arcs.

\begin{proposition}
\label{prop_extcomplex}
    Let $\mathbb X$ be an arc system for $(S,M)$ which does not cut out any 1-gons, and $X,Y\in\mathbb X$.
    Then $\mathrm{Ext}^\bullet_{\mathrm{Tw}(\mathcal A_{\mathbb X})}(X,Y)$ is the cohomology of the complex
    \begin{equation*}
        \mathrm{Hom}_{\mathcal F_{\mathbb X}}(X,Y)\otimes \on{H}^\bullet(S^{n-1};\mathbf k),\qquad d(a+b\tau)=(-1)^n\mathfrak m_2(b,\mathfrak m_0t^{-1})
    \end{equation*}
    where $\tau\in \on{H}^{n-1}(S^{n-1};\mathbf k)$ is the fundamental class.
\end{proposition}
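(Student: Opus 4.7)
The plan is to interpret $\mathrm{Ext}^\bullet_{\mathrm{Tw}(\mathcal A_{\mathbb X})}(X, Y)$ via the spherical adjunction $F \dashv G$ between $\mathcal F(S)$ and $\mathrm{Tors}(\mathcal A_{\mathbb X}) \subset \mathrm{Tw}(\mathcal A_{\mathbb X})$ constructed in~\cite{Hai21}, reading $X$ and $Y$ as their right-adjoint lifts $G(X), G(Y)$. By the adjunction,
\[
  \mathrm{Hom}_{\mathrm{Tw}(\mathcal A_{\mathbb X})}(G(X), G(Y)) \simeq \mathrm{Hom}_{\mathcal F(S)}(FG(X), Y),
\]
so the task reduces to computing the right-hand side inside $\mathcal F(S)$.

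Evaluating the exact triangle of endofunctors~\eqref{spherical_is_shift} at $X$ yields a distinguished triangle $X[n-1, n] \to FG(X) \to X \xrightarrow{\delta} X[n, n]$ in $\mathcal F(S)$. Applying $\mathrm{Hom}_{\mathcal F(S)}(-, Y)$ and unfolding the resulting long exact sequence identifies $\mathrm{Ext}^\bullet_{\mathcal F(S)}(FG(X), Y)$ with the cohomology of the two-term complex
\[
  \mathrm{Hom}_{\mathcal F(S)}(X, Y)\cdot\tau \xrightarrow{\delta^*} \mathrm{Hom}_{\mathcal F(S)}(X, Y),
\]
where $\tau$ is a formal generator of degree $n-1$ and parity $n\bmod 2$, and $\delta^*$ is pullback along $\delta$. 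Thus the underlying bigraded module is canonically $\mathrm{Hom}_{\mathcal F(S)}(X, Y) \otimes_{\mathbf k} H^\bullet(S^{n-1}; \mathbf k)$.

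The heart of the argument is to identify $\delta^*$ with the differential in the statement. The natural transformation $1 \to [n, n]$ in~\eqref{spherical_is_shift}, evaluated at an object, is a Kodaira--Spencer class of the curved deformation $\mathcal A_{\mathbb X}$. By Definition~\ref{def_curvedainfty} the curvature $\mathfrak m_0$ lies in $t \cdot \mathrm{Hom}^2_{\mathcal A_{\mathbb X}}(X, X)$, and since $|t| = 2 - n$ the quotient $\mathfrak m_0 t^{-1}$ is a well-defined element of $\mathrm{Hom}^n_{\mathcal F_{\mathbb X}}(X, X)$. Unpacking the construction of the counit $FG \to 1$ from~\cite{Hai21} will show that $\delta$ is represented precisely by this class; hence $\delta^*$ acts as right composition with $\mathfrak m_0 t^{-1}$, namely $b \mapsto (-1)^n \mathfrak m_2(b, \mathfrak m_0 t^{-1})$, where $(-1)^n$ is the standard $A_\infty$ Koszul sign for post-composition by a degree-$n$ morphism.

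Finally, the no-1-gon hypothesis enters as follows. A 1-gon of the decomposition cut out by $\mathbb X$ is exactly what is needed to produce an immersed 2-gon in $\widehat S$ whose arc-side lies in $\mathbb X$; ruling these out makes the $t^0$-component of $\mathfrak m_1$ vanish on $\mathrm{Hom}_{\mathcal F_{\mathbb X}}(X, Y)$ for $X, Y \in \mathbb X$, so that $\mathrm{Ext}^\bullet_{\mathcal F(S)}(X, Y) = \mathrm{Hom}^\bullet_{\mathcal F_{\mathbb X}}(X, Y)$. Substituting this into the two-term complex produces the complex in the statement. The main obstacle is the identification of $\delta$ with $\mathfrak m_0 t^{-1}$ together with the sign $(-1)^n$; this is a bookkeeping-heavy but essentially direct application of the spherical adjunction machinery of~\cite{Hai21}.
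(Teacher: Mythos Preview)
Your proposal is correct and follows essentially the same route as the paper: use the adjunction $F\dashv G$, the exact triangle~\eqref{spherical_is_shift} for $FG(X)$, and the no-1-gon hypothesis to kill $\mathfrak m_1$ on $\mathrm{Hom}_{\mathcal F_{\mathbb X}}(X,Y)$. The only difference is one of emphasis: where you frame the key step as identifying the connecting morphism $\delta$ with the Kodaira--Spencer class $\mathfrak m_0 t^{-1}$ and call this ``bookkeeping-heavy,'' the paper simply quotes the explicit twisted-complex model
\[
F(G(X))=\left(X\oplus X[n-1,n],\begin{pmatrix} 0 & 0 \\ (-1)^n\mathfrak m_0 t^{-1} & 0 \end{pmatrix}\right)
\]
from \cite[Subsection~2.3]{Hai21} and reads off the differential directly from the off-diagonal entry. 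This makes the sign and the form of the differential immediate rather than something to be extracted from the counit.
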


\begin{proof}
By the formula for the cone over the counit~\eqref{spherical_is_shift}, and adjunction, there is an exact triangle of complexes
\begin{equation}
    \mathrm{Hom}_{\mathcal F_{\mathbb X}}(X,Y)\to \mathrm{Hom}_{\mathrm{Tw}(\mathcal A_{\mathbb X})}(GX,GY)\to \mathrm{Hom}_{\mathcal F_{\mathbb X}}(X[n-1],Y).
\end{equation}
The differential on $\mathrm{Hom}_{\mathcal F_{\mathbb X}}(X,Y)$ is trivial by the assumption on $\mathbb X$.
The formula for the differential can be read off from
\begin{equation*}
    F(G(X))=\left(X\oplus X[n-1,n],\begin{pmatrix} 0 & 0 \\ (-1)^n\mathfrak m_0t^{-1} & 0 \end{pmatrix}\right)
\end{equation*}
which can be found in \cite[Subsection 2.3]{Hai21}.
\end{proof}

\begin{remark}
If $\partial S=\emptyset$, $M'=M$, and either $n$ is odd or $\nu$ orientable, then one can show that $\mathcal C(S,n)$ is proper and $n$-Calabi--Yau in the sense of proper $A_\infty$-categories over $\mathbf k$. The proof is similar to the one in the special case $n=3$ found in~\cite{Hai21}.
\end{remark}

%=========================================================
\subsection{Relation to RGB algebras}
\label{subsec_sgraphs}
%=========================================================

In this subsection we restrict to the case of quadratic differentials of infinite area and show that a full subcategory of $\mathcal C_{\mathrm{len}}(S,n)$ (often equal to the whole category) has a generator whose Yoneda algebra is an RGB algebra in the sense of Section~\ref{sec_gbga}.

In~\cite[Subsection 2.3]{CHQ23} we described the mixed-angulation and dual S-graph of a generic quadratic differential of infinite area, generalizing the ``WKB triangulation'' in the case of simple zeros.
We can consider each edge, $e$, of the S-graph as an object, $X_e\in \mathcal F_{\mathrm{len}}(S,M,\nu)$ in the Fukaya category.
There is a canonical choice of grading (branch of $\mathrm{Arg}$) such that $\mathrm{Arg}(\sqrt{\varphi})|_e\in(0,\pi)$.

\begin{definition}
    Let $\mathcal F_{\mathrm{core}}(S,M,\nu)\subset \mathcal F_{\mathrm{len}}(S,M,\nu)$ be the full triangulated subcategory generated by the edges of an S-graph.
\end{definition}

If $\varphi$ has no second order pole, then $\mathcal F_{\mathrm{core}}(S,M,\nu)=\mathcal F_{\mathrm{len}}(S,M,\nu)$, i.e.~edges generate the entire category, see \cite[Proposition 6.2]{HKK17}. If $\varphi$ has at least one second order pole, then $\mathcal F_{\mathrm{core}}(S,M,\nu)$ is an orthogonal summand of $\mathcal F_{\mathrm{len}}(S,M,\nu)$ whose orthogonal complement is a direct sum of copies of the category $D^b(\mathrm{Rep}(\mathbf k\mathbb Z))$, the bounded derived category of finite-dimensional representations of the group $\mathbb Z$ over $\mathbf k$, with one such copy for each second order pole.
This follows from the classification of objects in $\mathcal F(S,M,\nu)$, see~\cite[Theorem 4.3]{HKK17}, where an orthogonal summand of the form $D^b(\mathrm{Rep}(\mathbf k\mathbb Z))$ corresponds to objects supported on a simple loop around the second order pole.
Orthogonality follows because a sufficiently small loop around a higher order pole does not intersect any edge of the S-graph, as they have finite length, but the distance to a higher order pole is infinite with respect to the flat metric.
Here we are using the correspondence between intersection points of curves and morphisms in Fukaya categories of surfaces, see e.g.~\cite{IQZ20}.

We note that $\mathcal F_{\mathrm{len}}(S,M,\nu)$ and $\mathcal F_{\mathrm{core}}(S,M,\nu)$ have the same spaces of stability conditions, provided we choose $\A$ as a first homology group as in~\cite{HKK17}, since the objects in the any orthogonal summand corresponding to a double pole are necessarily semistable.

The point of the previous paragraph is that we might as well restrict to $\mathcal F_{\mathrm{core}}(S,M,\nu)$, which does have a classical generator.
The endomorphism algebra of the generator $E\coloneqq \bigoplus_e X_e$ turns out to have a very simple structure: It is formal and determined by a graded quiver with quadratic relations, see the proposition below.
This is shown in \cite[Section 6.1]{HKK17}.

\begin{proposition}
Let $E\coloneqq \bigoplus_e X_e$ be the generator of $\mathcal F_{\mathrm{core}}(S,M,\nu)$ given by the direct sum of all edges of a fixed S-graph $\Sgh$.
Then the $A_\infty$-structure maps $\mathfrak m_d$ of $\mathrm{End}(E)$ vanish for $d\neq 2$, i.e.~$\mathrm{End}(E)$ is essentially a graded algebra.
Moreover, this algebra has the following description in terms of a graded quiver $Q$ with relations:
\begin{enumerate}
    \item Vertices are the edges of $\Sgh$.
    \item Arrows $a_i$ correspond to pairs $(i,i+1)$ of consecutive halfedges (``corners'') of $\Sgh$ with degree given by $d(i,i+1)\in\ZZ$.
    \item Relations are quadratic of the form $a_ja_i=0$ if $i+1\neq j$ are halfedges belonging to the same edge.
\end{enumerate}
\end{proposition}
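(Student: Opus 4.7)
The plan is to compute the endomorphism $A_\infty$-algebra of $E$ by choosing an arc system for $(S, M, \nu)$ that contains the edges of $\Sgh$ among its arcs, and then applying the combinatorial description of $\mathcal F_{\mathbb X}$ recalled at the start of this section. Since the vertices of $\Sgh$ all lie in $M$ (interior ones being the singular points of $\sow$, boundary ones sitting on $\partial S$), the edges of $\Sgh$ are honest graded arcs on $(S, M, \nu)$. To extend them to a full arc system $\mathbb X$, I would adjoin, for each corner of each $\AS$-polygon dual to $\Sgh$, a short arc from the corresponding $\M$-vertex to the adjacent singular point. The result is an arc system that cuts $S$ into small sectors, one per corner of each $\AS$-polygon, with every edge of $\Sgh$ appearing as an element of $\mathbb X$.

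With this arc system fixed, $\Hom_{\mathcal F_{\mathbb X}}(X_e, X_f)$ between two edges $e, f$ of $\Sgh$ has a basis of boundary paths in $\widehat S$ from $X_e$ to $X_f$. Because each such path is confined to a single component of $\pi^{-1}(M)$, and the added arcs of $\mathbb X$ subdivide the components over $\M$-vertices into intervals each meeting at most one edge of $\Sgh$, the relevant boundary paths all travel along a component $\pi^{-1}(v)$ for some vertex $v$ of $\Sgh$. Any such path is determined by a starting halfedge at $v$, a nonnegative number of wraps around $v$ (if $v$ is interior), and a terminating halfedge; its length-one instances are the consecutive pairs $(i, i+1)$, yielding the arrows $a_i$. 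The degree is $|a_i| = d(i, i+1)$ by the general formula $|a| = i(X,a) - i(Y,a)$ together with the definition of $d(\cdot, \cdot)$ as counting the interspersed boundary arcs.

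For the formality claim $\mathfrak m_d = 0$ for $d \neq 2$, I would verify that for $d \geq 3$ no immersed $2d$-gon in $\widehat S$ exists whose sides alternate edges of $\Sgh$ and boundary paths of the above form. Any such polygon would chain its boundary paths through edges of $\Sgh$, but since each boundary path is confined to a single $\pi^{-1}(v)$, all corners of the polygon would localise at vertices of $\Sgh$; the combinatorics of the polygon together with the fact that distinct halfedges at a fixed $v$ occupy distinct directions in $T_v S$ then rules out $d \geq 3$. The quadratic relations $a_j a_i = 0$ when $i+1 \neq j$ are halfedges of a common edge $e$ follow by direct inspection of $\mathfrak m_2$: the end of $a_i$ lies on $\pi^{-1}(v_1)$ while the start of $a_j$ lies on $\pi^{-1}(v_2)$, where $v_1, v_2$ are the two distinct endpoints of $e$, so the two paths live on different components of $\pi^{-1}(M)$ and cannot be concatenated.

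The main obstacle will be setting up the arc system and the identification of basis elements uniformly enough to handle all the geometric special cases---$\infty$-gons, boundary vertices, and $1$-valent vertices---and simultaneously tracking the signs and $\ZZ \times \ZZ/2$-grading so as to pin down the multiplicative structure unambiguously. A secondary subtlety is the converse claim that \emph{every} basis boundary path between two edges of $\Sgh$ arises as a (possibly multi-wrap) concatenation of the corners $a_i$, so that the $a_i$'s with the stated relations indeed exhaust the algebra.
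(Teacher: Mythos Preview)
Your computation of the $\Hom$-spaces via boundary paths around the components $\pi^{-1}(v)$, the identification of the arrows $a_i$ with corners, and the quadratic relations $a_ja_i=0$ are all correct and match the paper's description. There are, however, two genuine gaps.

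\textbf{The arc-system extension is not well-posed.} In the Fukaya setup of Section~4, arcs have both endpoints in the Fukaya marked set $M$, and this $M$ is the set of singular points $\Delta$ of the weighted marked surface (zeros, simple poles, exponential-type boundary points) --- \emph{not} the marked points $\mathbf M$ of $\sow$. The vertices of the mixed-angulation polygons lie in $\mathbf M$, so your proposed ``short arcs from the $\mathbf M$-vertex to the adjacent singular point'' have one endpoint outside the allowed set and are not arcs in the sense required. Fortunately this is not fatal: the full subcategory on the S-graph arcs $\mathbb G$ and its structure constants are independent of how one extends $\mathbb G$ to a genuine arc system $\mathbb X$, so any extension will do and you need not construct a specific one.

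\textbf{The formality argument is incomplete and misses the key topological point.} Your claim that ``distinct halfedges at a fixed $v$ occupy distinct directions in $T_vS$'' rules out immersed $2d$-gons with $d\geq 3$ is not an argument; immersed polygons can wrap and there is no local obstruction of this kind. The actual reason --- used in the paper (see the proof of Theorem~\ref{thm:nonformalgen}) and in \cite[Section~6.1]{HKK17} --- is that the S-graph is a deformation retract of the surface. Consequently the arcs in $\mathbb G$ together with the marked boundary $\pi^{-1}(M)$ cut out no disk regions in $\widehat S$ at all: the boundary loop of any putative immersed $2d$-gon would map into $\Sgh\cup M$, hence be null-homotopic in $\Sgh$ (since $\Sgh\hookrightarrow S$ is a homotopy equivalence), forcing it to backtrack in the graph and the polygon to degenerate. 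This single observation kills all immersed $2d$-gons for every $d\geq 1$ simultaneously, giving $\mathfrak m_d=0$ for $d\neq 2$ and leaving only concatenation for $\mathfrak m_2$. Once you have this, the quiver-with-relations description follows exactly as you wrote.
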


We assume from now on that $n\geq 1$ and that $M'$ includes all the zeros and simple poles of $\varphi$, i.e.~$M'\subset M$ is maximal.
This is to ensure that we get a proper dg-algebra.

The following theorem describes the endomorphism algebra of the corresponding generator of $\mathcal C_{\mathrm{core}}(S,n)=\mathcal C_{\mathrm{core}}(S,M,\nu,M',n)$ which is by definition the triangulated closure of the image of $\mathcal F_{\mathrm{core}}(S,M,\nu)$ in $\mathcal C(S,M,\nu,M',n)$.
Recall that if there are no second order poles, then $\mathcal F_{\mathrm{core}}(S,M,\nu)=\mathcal F_{\mathrm{len}}(S,M,\nu)$
and thus we also get $\mathcal C_{\mathrm{core}}(S,n)=\mathcal C_{\mathrm{len}}(S,n)$.

\begin{theorem}\label{thm:nonformalgen}
Let $E\coloneqq \bigoplus_e X_e$ be the generator of $\mathcal C_{\mathrm{core}}(S,n)$ given by the direct sum of all edges of a fixed S-graph $\Sgh$.
Then $\mathrm{End}(E)$ is quasi-isomorphic to the finite-dimensional dg-algebra $A(\Sgh,n)$ defined in  Section~\ref{sec_gbga}.
\end{theorem}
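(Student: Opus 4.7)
The plan is to construct a dg-algebra quasi-isomorphism $\phi\colon A(\Sgh,n)\to\mathrm{End}_{\mathcal{C}(S,n)}(G(E))$. Take the arc system $\mathbb{X}=\mathbb{A}$ to be the mixed-angulation dual to $\Sgh$, so that each edge $e$ of $\Sgh$ corresponds to an arc $X_e\in\mathbb{A}$ and $G(E)=G(\bigoplus_e X_e)$. By the preceding proposition, the Fukaya algebra $R\coloneqq\mathrm{End}_{\mathcal{F}_{\mathbb{A}}}(E)$ is formal and presented by corner arrows $a_i$ (of degree $d(i,i+1)$) subject to the quadratic relations $a_j a_i=0$ for $i+1\neq j$ halfedges of the same $\Sgh$-edge. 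These match exactly the arrows and quadratic relations in $A(\Sgh,n)$, so $\phi$ sends each generator $a_i$ of $A(\Sgh,n)$ to the corresponding element of $R$.

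By Proposition~\ref{prop_extcomplex}, applied after (if needed) refining $\mathbb{A}$ to remove any $1$-gons, the underlying complex of $\mathrm{End}_{\mathrm{Tw}(\mathcal{A}_{\mathbb{A}})}(G(E))$ is $R\otimes_{\mathbf{k}} H^\bullet(S^{n-1};\mathbf{k})$ with differential $d(x+y\tau)=(-1)^n\mathfrak{m}_2(y,\mathfrak{m}_0 t^{-1})$, and $\mathfrak{m}_0|_{X_e}=\sum_{p\in\partial X_e\cap M'}\varepsilon_p\,c_{p,X_e}\,t$. Set $\phi(\tau_i)\coloneqq\epsilon_i\,\tau\cdot e_{X_e}$ for each halfedge $i$ at a boundary $\Sgh$-vertex, choosing the signs $\epsilon_i\in\{\pm 1\}$ compatibly with the parity factors $\varepsilon_\pm$ of Definition~\ref{def_curvedainfty} so that the relations $\tau_i=(-1)^n\tau_j$ and $a_i\tau_i=(-1)^{|a_i|}\tau_{i+1}a_i$ of Definition~\ref{def_gbga} are respected. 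The relation $\tau_i^2=0$ then holds because $\tau^2=0$ in $H^\bullet(S^{n-1})$. The differential relation $d(\tau_j)=(-1)^n c_i$ at an interior--boundary edge reduces to identifying the Fukaya curvature contribution $\varepsilon_p c_{p,X_e}$ at the interior endpoint $p$ with $\phi(c_i)=(a_{i-1}\cdots a_i)^{n/m}$ inside $R$, and the cycle identity $c_i=(-1)^{n-1}c_j$ at an interior--interior edge appears analogously via $d(\tau\cdot e_{X_e})=(-1)^n(\varepsilon_p c_{p,X_e}+\varepsilon_{p'} c_{p',X_e})$.

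The main technical step is precisely this identification of the Fukaya curvature cycle $c_{p,X_e}\in R$ with the S-graph cycle $\phi(c_i)$, up to a sign determined by orientability: although $c_{p,X_e}$ (the boundary loop winding around the marked-point endpoint $p$ on $\partial\widehat{S}$) and $c_i$ (winding around the adjacent singular-point vertex $v_i$ along $\partial P_{v_i}$) are a priori different boundary paths, as elements of the formal algebra $R$, or more precisely of the $\mathfrak{m}_2$-cohomology of $\mathcal{A}_{\mathbb{A}}$, the quadratic relations together with the grading constraint from Definition~\ref{def:mixedangulation} force $c_{p,X_e}\equiv\pm c_i$, with the sign fixed by the orientation data. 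The sign conventions of Definition~\ref{def_gbga} were designed precisely so that this match comes out correctly in each case. Once the identification is in place, that $\phi$ is a quasi-isomorphism follows by comparing the explicit basis of $A(\Sgh,n)$ from Subsection~\ref{subsec_koszuldual} with a basis of $H^\bullet(R\otimes H^\bullet(S^{n-1};\mathbf{k}))$ in each idempotent and each degree.
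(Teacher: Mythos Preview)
Your proposal has two genuine problems.

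\textbf{Wrong arc system.} You take the arc system to be the mixed-angulation $\mathbb{A}$, with $X_e\in\mathbb{A}$ the arc dual to the S-graph edge $e$. But in Section~\ref{sec_deffuk} the set $M$ consists of what were called \emph{singular} points $\W$ earlier, so arcs must have endpoints in $\W$, not in the marked set $\M$. The objects $X_e$ in the theorem are the S-graph edges themselves; the paper takes $\mathbb{G}$ to be this set of edges (extended to a full arc system). Your subsequent ``main technical step'' of identifying the curvature loop $c_{p,X_e}$ around a marked endpoint with the cycle $c_i$ around a singular vertex is an artifact of this confusion: once one uses the correct arcs, $c_{p,X_e}$ and $c_i$ are literally the same boundary path, and there is nothing to prove.

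\textbf{The map $\phi$ is not well-defined.} Even with the correct arcs, there is no strict dg-algebra map $\phi\colon A(\Sgh,n)\to\mathrm{End}(G(E))$ sending each $a_i$ to the corresponding corner path. For an edge $\{i,j\}$ with both ends at interior vertices, $A(\Sgh,n)$ has the \emph{relation} $c_i=(-1)^{n-1}c_j$, so you would need $\phi(c_i)=(-1)^{n-1}\phi(c_j)$ on the nose. But $\phi(c_i)$ and $\phi(c_j)$ are the two distinct boundary cycles around the two endpoints of $X_e$; in the model $\mathcal{F}_{\mathbb{G}}[\tau]/\tau^2$ for $\mathrm{End}(G(E))$ their difference is a nonzero exact element (the boundary of $\tau\cdot e_{X_e}$), not zero. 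You noticed this exactness but did not notice that it obstructs $\phi$ from being a strict map of dg-algebras.

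The paper circumvents this by going in the other direction. It first realizes $\mathcal{F}_{\mathbb{G}}[\tau]/\tau^2$ as a sub-dg-algebra of $\mathrm{End}(G(E))$ via $a+b\tau\mapsto\left(\begin{smallmatrix}a&0\\b&a\end{smallmatrix}\right)$, which is a quasi-isomorphism. This intermediate algebra retains elements $\tau_i$ at \emph{all} halfedges, not just boundary ones. Then one quotients by the acyclic ideal generated by the pairs $(\tau_i,\,d\tau_i)$ and $(a_i\tau_i,\,d(a_i\tau_i))$ at interior halfedges; the resulting quotient is exactly $A(\Sgh,n)$, and the quotient map is a quasi-isomorphism because the kernel is contractible. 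Thus the comparison is a zigzag $\mathrm{End}(G(E))\hookleftarrow \mathcal{F}_{\mathbb{G}}[\tau]/\tau^2\twoheadrightarrow A(\Sgh,n)$, not a single map in either direction.
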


\begin{proof}
Let $\mathbb G$ be the collection of arcs which are edges of the S-graph and extend this to an arc system $\mathbb X$ of $(S,M)$.
The curved $A_\infty$-category $\mathcal A_\mathbb X$, whose objects are the arcs in $\mathbb X$, has a full subcategory, denoted $\mathcal A_\mathbb G$, whose objects are the arcs in $\mathbb G$, i.e.~edges of the S-graph.
Since the S-graph is a deformation retract of the surface, the arcs in $\mathbb G$ do not cut out any polygons, and thus there are no contributions to the structure constants of $\mathcal A_\mathbb G$ from immersed $2d$-gons.
In particular, $\mathfrak m_d=0$ for $d\neq 0,2$, so $\mathcal A_\mathbb G$ is essentially a curved algebra.
By construction, $E\in \mathrm{Tw}(\mathcal A_\mathbb G)=\mathcal C_{\mathrm{core}}(S,M,\nu)$ is the twisted complex
\begin{equation}\label{sgraphcomplex}
G(E')=\left(\mathcal A_\mathbb G\oplus \mathcal A_\mathbb G[1-n,n],\begin{pmatrix} 0 & t \\ (-1)^n\mathfrak m_0t^{-1} & 0\end{pmatrix}\right)
\end{equation}
where $E'\in\mathcal F_\mathbb X$ is the object given by the direct sum of edges in $\mathbb G$.
Thus $\mathrm{End}(E)$ is the dg-algebra of 2-by-2 matrices in $\mathcal A_\mathbb G$ with differential given by the commutator with the matrix appearing in \eqref{sgraphcomplex}.

We already know from Proposition~\ref{prop_extcomplex} that the chain complex underlying $\mathrm{End}(E)$ is quasi-isomorphic to the chain complex
\[
\mathcal F_{\mathbb G}[\tau]/\tau^2,\qquad d(a+b\tau)=(-1)^n\mathfrak m_2(b,\mathfrak m_0t^{-1}).
\]
This chain complex has a natural dg-algebra structure, where $\tau$ is given bi-degree $(|\tau|,\pi(\tau))=(n-1,n)\in\mathbb Z\times\mathbb Z/2$ and required to be central in the $\ZZ\times\ZZ/2$-graded sense, i.e.
\[
\tau a=(-1)^{(n-1)|a|+n\pi(a)}a\tau
\]
which forces $\tau^2=0$.
Moreover, it is naturally a sub-dg-algebra of $\mathrm{End}(E)$ via the inclusion map
\[
a+b\tau\mapsto \begin{pmatrix} a & 0 \\ b & a \end{pmatrix}
\]
which is a quasi-isomorphism of dg-algebras.

Given a halfedge $i$ belonging to an edge $e$ of the S-graph, let $\tau_i\coloneqq (-1)^{n\varepsilon(i)}1_e\tau$, where $1_e$ is the idempotent corresponding to $e$ and $\varepsilon(i)$ is $0$ (resp. $1$) if $e$ points away from (resp. towards) $i$.
Because of $|a|+\pi(a)=\varepsilon_-(a)+\varepsilon_+(a)\mod 2$ we then get $a_i\tau_i=(-1)^{|a_i|}\tau_{i+1}a_i$.
Also, $\tau_i=(-1)^n\tau_j$ if $i\neq j$ belong to the same edge.

For any halfedge $i$ define $c_i$ as in \Cref{def_gbga}, then $c_i$ is equal, up to sign, to $c_{p,X}$ from \Cref{def_curvedainfty} where $i$ belongs to the edge $X$ and ends at $p$.
Consider the ideal $I\subset \mathcal F_{\mathbb G}[\tau]/\tau^2$ generated by
\begin{itemize}
\item
$\tau_i$ and $d(\tau_i)=c_i+(-1)^nc_j$ for those halfedges $i$ belonging to an edge $\{i,j\}$ of the S-graph attached to internal vertices only,
\item
$a_i\tau_i$ and $d(a_i\tau_i)=\pm a_{i}c_i$ if $i$ is a halfedge attached to an internal vertex.
\end{itemize}
Since $I$ consists of pairs of basis elements and their images under $d$, it is acyclic and therefore $\mathcal F_{\mathbb G}[\tau]/\tau^2$ is quasi-isomorphic to the quotient dg-algebra $\left(\mathcal F_{\mathbb G}[\tau]/\tau^2\right)/I$, which is precisely $A(\Sgh,n)$.
\end{proof}

\begin{remark}
    As a consequence of the above theorem and the independence of the $A_\infty$-category $\mathcal C_{\mathrm{core}}(S,n)$ on the choice of arc system, we find that the derived Morita equivalence class of $A(\Sgh,n)$ only depends on $S$, $M$, $\nu$, and $n$ ($M'$ was assumed to be maximal).
    In the case of usual (ungraded) Brauer graph algebras, an analogous result was established in~\cite{oz22} likewise using $A_\infty$-structures and arc systems.
\end{remark}

%=========================================================
\subsection{Transfer of stability conditions}
\label{subsec_transfer}
%=========================================================

In this subsection we recall the main result about stability conditions from~\cite{HKK17} and a theorem from~\cite{Hai21} which, put together, allow us to show that certain components of the space of stability conditions of a full subcategory of $\mathcal C(S,n)$ are identified with moduli spaces of quadratic differentials.

Let us first recall the main result of~\cite{HKK17}.
Let $C$ be a compact Riemann surface and $\varphi$ a quadratic differential which is holomorphic and non-vanishing away from a finite set $D\subset C$ where $\varphi$ is allowed to have zeros, poles and exponential singularities.
Let us discuss how to obtain a triple $(S,M,\nu)$ from $(C,\varphi)$. In the case where $\varphi$ has no exponential singularities, we can simply take $S=C$, $M=D$, and $\nu$ to be the horizontal foliation of $\varphi$.
In general, $S$ will be a compact surface with boundary embedded in $C$, which is the complement of a small open disk around each exponential singularity.
Moreover, for each exponential singularity $z\in C$ with corresponding component $B\subset \partial S$, which is the boundary of the open disk around $z$, and set $E\subset \overline{C\setminus D}$ of infinite-angle conical points (the completion $\overline{C\setminus D}$ is taken with respect to the metric $|\varphi|$), we have a set of non-intersecting paths in $\overline{C\setminus D}$, one from each point in $E$ to a point in $M\cap B$.

If $\varphi$ has no higher order ($\geq 2$) poles, then it is shown in~\cite{HKK17} that there is a stability condition on $\mathcal F(S,M,\nu)$ so that, roughly speaking, $Z$ is given by integrating $\sqrt{\varphi}$ and semistable objects correspond to geodesics on the flat surface $\overline{C\setminus D}$.
In the presence of higher order poles this still works, but one first needs to pass to a full subcategory $\mathcal F_{\mathrm{len}}(S,M,\nu)\subset \mathcal F(S,M,\nu)$ of those objects which are supported away from the higher order poles (so that all central charges, $Z(E)$, are finite).
Here we use the correspondence of objects in $\mathcal F(S,M,\nu)$ and curves on $S$ with extra data.

Instead of fixing $(C,\varphi)$ we can consider the moduli space of such pairs which give rise to equivalent marked surfaces $(S,M,\nu)$.
Thus, as in~\cite{HKK17}, we let $\mathcal M(S,M,\nu)$ be the space of pairs $(C,\varphi)$ corresponding to a marked surface $(S',M',\nu')$ as above, together with a diffeomorphism $f\colon S'\to S$ with $f(M')=M$ and a homotopy class of paths between $f_*\nu'$ and $\nu$ as sections of $\mathbb P(TS)|_{S\setminus M}$.
Passing from $\varphi$ to the cohomology class of $\sqrt{\varphi}$ gives a local homeomorphism
\begin{equation}
\mathcal M(S,M,\nu)   \longrightarrow\mathrm{Hom}(\A_{S,M,\nu},\mathbb C)
\end{equation}
where
\begin{equation}
    \A_{S,M,\nu}\coloneqq H_1(S,M;\mathbb Z\otimes_{\mathbb Z/2}\Sigma)
\end{equation}
and $\Sigma\to S\setminus M$ is the double cover of orientations of $\nu=\mathrm{hor}(\varphi)$ as before.

The main result of \cite{HKK17} is the following.

\begin{theorem}
\label{thm_hkk}
Fix a triple $(S,M,\nu)$. Then there is a canonical map
\begin{equation}
    \mathcal M(S,M,\nu)\longrightarrow \mathrm{Stab}\left(\mathcal F_{\mathrm{len}}(S,M,\nu)\right)
\end{equation}
which is biholomorphic onto a union of connected components.
\end{theorem}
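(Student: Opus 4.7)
The plan is to construct, for each pair $(C,\varphi)\in\mathcal M(S,M,\nu)$, a stability condition on $\mathcal F_{\mathrm{len}}(S,M,\nu)$ whose central charge is the period map $\gamma\mapsto\int_\gamma\sqrt{\varphi}$ on $\A_{S,M,\nu}$ and whose semistable objects of phase $\phi$ correspond to saddle connections and closed geodesics on the flat surface $(C,|\varphi|)$ of constant horizontal phase $\phi$. Biholomorphicity onto a union of components will then follow by comparing the period map $\mathcal M(S,M,\nu)\to\mathrm{Hom}(\A_{S,M,\nu},\mathbb C)$ with Bridgeland's central charge map $\mathcal Z\colon\mathrm{Stab}\to\mathrm{Hom}(K_0,\mathbb C)$.

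First I would construct the map. Fix $(C,\varphi)$ with no horizontal saddle connection, which is a dense open condition in $\mathcal M(S,M,\nu)$. The horizontal foliation decomposes $C$, away from the critical graph and small disks around higher-order poles and exponential singularities, into finitely many horizontal strips together with half-plane and spiral domains coming from singularities of $\varphi$. Each horizontal strip contains a canonical transverse saddle connection which I interpret as a graded arc on $(S,M,\nu)$: the grading is induced by $\nu=\mathrm{hor}(\varphi)$ and the lift to $\Sigma$ by the orientation provided by $\mathrm{Im}\sqrt{\varphi}$. The resulting collection $\mathbb X_\varphi$ is an arc system on $(S,M,\nu)$ which avoids the higher-order poles, and its associated arc-objects form the simples of the heart $\mathcal A\subset \mathcal F_{\mathrm{len}}(S,M,\nu)$ of a bounded $t$-structure. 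I then put $Z([X])\coloneqq \int_X\sqrt{\varphi}$, which lies in the open upper half-plane by the saddle-free assumption, and extend linearly to the Grothendieck group of $\mathcal A$.

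The hard part is showing that these data satisfy the Harder--Narasimhan and support properties. My approach would be to set up, directly from the flat geometry, a correspondence between indecomposable semistable objects of phase $\phi$ and geodesic segments of phase $\phi$ on $(C,|\varphi|)$. The HN filtration of a general object then arises from decomposing the geodesic representative of its underlying graded curve into maximal constant-phase sub-segments, the existence and uniqueness of such a representative being the key flat-geometric input. Continuity under small rotations of the horizontal direction, combined with the observation that rotating past a saddle connection corresponds to a simple tilt of $\mathcal A$, would establish independence from the initial saddle-free choice and would also imply that different choices land in the same component of $\mathrm{Stab}$. The support property follows from the fact that, for any fixed arc system, the mass of a semistable object is bounded below by a uniform multiple of the length of its geodesic representative, which in turn dominates $|Z|$.

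Finally, the composition
\[
\mathcal M(S,M,\nu)\longrightarrow\mathrm{Stab}(\mathcal F_{\mathrm{len}}(S,M,\nu))\xrightarrow{\;\mathcal Z\;}\mathrm{Hom}(\A_{S,M,\nu},\mathbb C)
\]
is, tautologically, the period map, which is a local homeomorphism. By Bridgeland's deformation theorem $\mathcal Z$ is itself a local homeomorphism on each connected component of $\mathrm{Stab}$, so the first arrow is automatically a local biholomorphism onto its image. Openness of the image in any component it meets is immediate from the deformation theorem. For closedness in its component, I would argue that the property of admitting geodesic representatives for all semistable objects is preserved under small perturbations, using continuity of horizontal strip decompositions away from walls where a saddle connection becomes horizontal; this would confine any limit stability condition in the same component to again come from a differential in $\mathcal M(S,M,\nu)$. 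The main obstacle throughout is the semistable--geodesic dictionary in the third paragraph, which is what makes the entire construction geometric rather than purely categorical.
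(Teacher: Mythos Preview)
The paper does not give a proof of this theorem: it is simply recalled as ``the main result of~\cite{HKK17}'' and cited without argument. There is therefore nothing in the present paper to compare your proposal against.

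That said, your sketch is broadly faithful to the strategy actually carried out in~\cite{HKK17}: construct a heart from the horizontal strip decomposition of a saddle-free differential, define $Z$ by periods of $\sqrt{\varphi}$, identify semistable objects with finite-length geodesics, and deduce that the map is a local biholomorphism by factoring through the period map. Two points where your outline is thinner than the actual proof: first, the Harder--Narasimhan property in~\cite{HKK17} is obtained not by decomposing a geodesic representative of an arbitrary object into constant-phase pieces (such representatives need not exist for general objects), but by showing directly that the heart built from the strip decomposition is a finite-length abelian category, which automatically gives HN filtrations. Second, your closedness argument (``the property of admitting geodesic representatives is preserved under small perturbations'') is the genuinely delicate step, and in~\cite{HKK17} it requires a careful analysis of how the strip decomposition and the associated $t$-structure degenerate as one approaches a wall, together with control over what can happen to stable objects in the limit; this is where most of the work lies and your sketch does not yet supply the mechanism.
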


We will combine the above theorem with the following result from~\cite{Hai21}.

\begin{theorem}
\label{thm_transfer}
Let $G\colon \mathcal C\to\mathcal D$ be an exact functor between triangulated categories such that:
\begin{enumerate}
    \item $G$ has a left adjoint $F\colon \mathcal D\to\mathcal C$ and there exists an $n\geq 3$, such that
    \begin{equation}
        \mathrm{Cone}\left(\varepsilon_X\colon FGX\to X\right)\cong X[n]\,,
    \end{equation}
    for any $X\in\mathcal C$, where $\varepsilon$ denotes the counit of the adjunction.
    \item The image of $G$ generates $\mathcal D$ under direct sums, shifts, and cones.
    \item $\mathcal C$ admits a bounded t-structure.
\end{enumerate}
Then $K_0(G)$ induces an isomorphism $K_0(\mathcal C)\to K_0(\mathcal D)$ and there is a pushforward map
\begin{equation}
    G_*\colon \mathrm{Stab}(\mathcal C,\A,\mathrm{cl})\longrightarrow\mathrm{Stab}\left(\mathcal D,\A,\mathrm{cl}\circ \left(K_0(G)\right)^{-1}\right)
\end{equation}
which is a biholomorphism onto its image which is a union of connected components.
Given a stability condition on $\mathcal C$ its image under $G_*$ has the same central charge and $\mathcal D_\phi=G(\mathcal C_\phi)$.
\end{theorem}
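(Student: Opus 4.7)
The plan is to build the pushforward as $G_*(Z,\mathcal P) = (Z,\mathcal P')$, keeping the central charge functional $Z$ and defining $\mathcal P'(\phi) \coloneqq G(\mathcal P(\phi))$. I would then check in order: (i) $K_0(G)$ is an isomorphism, so that the classification map $\mathrm{cl}\circ K_0(G)^{-1}$ is well-defined; (ii) $\mathcal P'$ is a slicing with $Z$-phases matching those of $\mathcal P$; (iii) the support property transfers; (iv) $G_*$ is a local biholomorphism with clopen image. The key technical tool, used throughout, is the long exact sequence obtained by applying $\Hom_{\mathcal C}(-,Y)$ to the triangle $FGX\to X\to X[n]\to FGX[1]$ and invoking the adjunction $\Hom_{\mathcal C}(FGX,Y)\cong\Hom_{\mathcal D}(GX,GY)$; for all $X,Y\in\mathcal C$ and $i\in\mathbb Z$ it produces
\[
\Ext_{\mathcal C}^{i-n}(X,Y)\to \Ext_{\mathcal C}^i(X,Y)\to \Ext_{\mathcal D}^i(GX,GY)\to \Ext_{\mathcal C}^{i+1-n}(X,Y)\to \Ext_{\mathcal C}^{i+1}(X,Y).
\]
The gap $n\geq 3$ will make the outer $\Ext$-terms vanish in the cases of interest.

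For the $K_0$-isomorphism, surjectivity is immediate from hypothesis (2). For injectivity I would identify $K_0(\mathcal C)=K_0(\mathcal H)$ via the bounded t-structure, then apply the long exact sequence with $X,Y\in\mathcal H$ at $i=0$: the outer $\Ext$-groups vanish, yielding $\Hom_{\mathcal D}(GX,GY)\cong \Hom_{\mathcal C}(X,Y)$. Hence $G|_{\mathcal H}$ is fully faithful and exact, and induces an injection $K_0(\mathcal H)\hookrightarrow K_0(\mathcal D)$. For the semiorthogonality of $\mathcal P'$, if $A\in\mathcal P(\phi_1)$ and $B\in\mathcal P(\phi_2)$ with $\phi_1>\phi_2$, phase-ordering together with $n\geq 3$ annihilates both $\Hom_{\mathcal C}(A,B)$ and $\Ext_{\mathcal C}^{1-n}(A,B)$, forcing $\Hom_{\mathcal D}(GA,GB)=0$; the phase condition on $\mathcal P'$ is automatic because $Z$ is unchanged.

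The cleanest way to obtain HN filtrations in $\mathcal D$ is to promote $G(\mathcal H)$ to the heart of a bounded t-structure on $\mathcal D$: extension-closure follows from the Hom-vanishing just established, while boundedness and coverage are produced by iterated use of the triangle $FGY\to Y\to Y[n]$ together with hypothesis (2) to express any $Y\in\mathcal D$ as an extension of $G$-images with bounded cohomological amplitude. Transporting $(Z,\mathcal H)$ to $(Z,G(\mathcal H))$ via the equivalence $G|_{\mathcal H}$ and unwinding the usual HN construction in a bounded heart then produces the filtration of $Y$. The support property transfers automatically because $K_0(G)$ identifies $\mathcal P$-semistable classes with $\mathcal P'$-semistable classes carrying the same $Z$-values.

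Finally, $G_*$ is continuous and a local biholomorphism because both $\Stab(\mathcal C,\A,\mathrm{cl})$ and $\Stab(\mathcal D,\A,\mathrm{cl}\circ K_0(G)^{-1})$ are complex manifolds étale over $\Hom(\A,\mathbb C)$ via the central-charge projection, and $G_*$ lifts the identity on that base; openness of the image is immediate, and closedness follows because the condition ``each slice is a $G$-image'' is preserved under limits of slicings. I expect the main obstacle to lie within step (ii): specifically, showing that $G(\mathcal H)$ really is the heart of a \emph{bounded} t-structure on $\mathcal D$, i.e.\ that every object has only finitely many nonzero $G(\mathcal H)$-cohomologies. This is the one place where the generation hypothesis, the cone identity $\mathrm{Cone}(\varepsilon_X)\cong X[n]$, and the gap $n\geq 3$ must all be used simultaneously and in a coordinated way, as opposed to the formal or local arguments that suffice elsewhere.
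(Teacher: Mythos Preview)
The paper does not prove this theorem at all: it is quoted verbatim as ``the following result from~\cite{Hai21}'' and then immediately applied. There is therefore no proof in the paper to compare your proposal against.

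That said, your outline is essentially the correct strategy and is close to how such transfer results are established. A few points deserve tightening. First, the triangle $FGY\to Y\to Y[n]$ lives in $\mathcal C$ and is only given for $Y\in\mathcal C$; for an arbitrary $Y\in\mathcal D$ you do not have such a triangle. What you actually use for boundedness is hypothesis~(2): every $Y\in\mathcal D$ is a finite iterated extension of objects of the form $GX$, each of which lies in some $G(\mathcal H)[a]*\cdots*G(\mathcal H)[b]$ because $X$ has bounded $\mathcal H$-cohomology. Second, your injectivity argument for $K_0(G)$ is premature: full faithfulness of $G|_{\mathcal H}$ does not by itself give an injection $K_0(\mathcal H)\hookrightarrow K_0(\mathcal D)$, since $K_0(\mathcal D)$ could in principle have more relations. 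The clean argument is to first establish that $G(\mathcal H)$ is the heart of a bounded $t$-structure on $\mathcal D$ (this is exactly the step you flag as the crux), after which $K_0(\mathcal D)\cong K_0(G(\mathcal H))\cong K_0(\mathcal H)$ follows from the equivalence of abelian categories. Third, your closedness argument (``each slice is a $G$-image is preserved under limits'') is too informal; one usually argues via the local homeomorphism to $\Hom(\Gamma,\mathbb C)$ on both sides and shows that a convergent sequence $G_*\tau_n\to\sigma$ forces the $\tau_n$ to converge in $\mathrm{Stab}(\mathcal C)$.
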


We apply the above theorem to the case where $\mathcal C=\mathcal F_{\mathrm{len}}(S,M,\nu)$ and $\mathcal D$ is the full triangulated subcategory, denoted $\mathcal C_{\mathrm{len}}(S,n)$, of $\mathcal C(S,n)$ generated by the image of $\mathcal C$ under the functor $G$ from~\eqref{G_functor}.
If there exist no $p\in M\cap \mathrm{int}(S)$ with $\mathrm{ind}(p)\leq 0$, then $\mathcal C_{\mathrm{len}}(S,n)=\mathcal C(S,n)$.

\begin{theorem}
\label{thm_defstab}
Fix a quintuple $(S,M,\nu,M',n)$ with $n\geq 3$, then there is a canonical map
\begin{equation*}
    \mathcal M(S,M,\nu)\longrightarrow \mathrm{Stab}\left(\mathcal C_{\mathrm{len}}(S,n)\right)
\end{equation*}
which defines a biholomorphism with a union of connected components.
\end{theorem}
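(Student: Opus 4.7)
The plan is to deduce \Cref{thm_defstab} by combining \Cref{thm_hkk} with the transfer principle \Cref{thm_transfer}. More precisely, I would factor the claimed canonical map as
\[
\mathcal M(S,M,\nu) \xrightarrow{\text{Thm \ref{thm_hkk}}} \mathrm{Stab}(\mathcal F_{\mathrm{len}}(S,M,\nu)) \xrightarrow{G_*} \mathrm{Stab}(\mathcal C_{\mathrm{len}}(S,n)),
\]
where $G$ denotes the restriction of the right adjoint functor from~\eqref{G_functor} to $\mathcal F_{\mathrm{len}}$, landing in $\mathcal C_{\mathrm{len}}(S,n)$ by the very definition of the latter. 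Since each of the two maps is a biholomorphism onto a union of connected components (of the respective stability manifold), their composite will have the same property, proving the theorem.

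To apply \Cref{thm_transfer}, I would verify its three hypotheses in turn for the functor $G\colon \mathcal F_{\mathrm{len}}(S,M,\nu)\to \mathcal C_{\mathrm{len}}(S,n)$. For hypothesis (1), the left adjoint $F$ is the base change functor of \Cref{subsec_construction}; I would check that it restricts to $\mathcal C_{\mathrm{len}}(S,n)\to \mathcal F_{\mathrm{len}}(S,M,\nu)$. Since $\mathcal C_{\mathrm{len}}(S,n)$ is generated under shifts, sums, and cones by $G(\mathcal F_{\mathrm{len}})$ and $F$ is exact, this reduces to showing $FG(X)\in\mathcal F_{\mathrm{len}}$ for $X\in\mathcal F_{\mathrm{len}}$, which is immediate from the triangle~\eqref{spherical_is_shift} since $FG(X)$ is an extension of $X$ by a shift of $X$. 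The same triangle furnishes an isomorphism $\mathrm{Cone}(\varepsilon_X)\cong X[n]$ after forgetting the $\mathbb Z/2$-grading, giving exactly the cone condition of \Cref{thm_transfer}; the assumption $n\geq 3$ is respected. Hypothesis (2), that the image of $G$ generates $\mathcal C_{\mathrm{len}}(S,n)$, is tautological from the definition of $\mathcal C_{\mathrm{len}}(S,n)$. Hypothesis (3), the existence of a bounded $t$-structure on $\mathcal F_{\mathrm{len}}(S,M,\nu)$, follows from the nonemptiness of $\mathrm{Stab}(\mathcal F_{\mathrm{len}}(S,M,\nu))$, which is supplied by \Cref{thm_hkk} (any stability condition determines such a $t$-structure).

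Once the hypotheses are verified, \Cref{thm_transfer} directly produces the desired pushforward $G_*$ as a biholomorphism onto a union of connected components of $\mathrm{Stab}(\mathcal C_{\mathrm{len}}(S,n))$, and canonicity of the resulting composite map follows from the canonicity of each constituent. Under this map, the central charge attached to a quadratic differential $\varphi$ is transferred unchanged (only the identification of lattices via $K_0(G)$ intervenes), and the heart on $\mathcal C_{\mathrm{len}}(S,n)$ is obtained by applying $G$ to the heart on $\mathcal F_{\mathrm{len}}(S,M,\nu)$.

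The main subtlety I anticipate lies in hypothesis (1), specifically in reconciling the $\mathbb Z\times\mathbb Z/2$-grading present on $\mathcal C(S,n)$ (and implicit in the shift $[n,n]$ appearing in~\eqref{spherical_is_shift}) with the $\mathbb Z$-grading required by \Cref{thm_transfer}: one must carefully confirm that the underlying triangulated category, obtained by forgetting the $\mathbb Z/2$-component of the grading, still satisfies $\mathrm{Cone}(\varepsilon_X)\cong X[n]$ in the ordinary sense. A secondary point worth making explicit is that $F$ genuinely restricts to $\mathcal C_{\mathrm{len}}(S,n)\to\mathcal F_{\mathrm{len}}(S,M,\nu)$, so that the adjunction $F\dashv G$ between the full categories descends to an adjunction between the relevant subcategories; this is needed because \Cref{thm_transfer} applies to an adjunction between the ambient triangulated categories in which one works throughout.
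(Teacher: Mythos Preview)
Your proposal is correct and follows exactly the same approach as the paper: the paper's proof simply states that the result follows from \Cref{thm_hkk} and \Cref{thm_transfer}, with the hypotheses of the latter verified via~\eqref{spherical_is_shift}. You have spelled out in more detail than the paper why each hypothesis of \Cref{thm_transfer} holds (and correctly flagged the minor bookkeeping around the $\ZZ/2$-grading and the restriction of the adjunction), but the strategy is identical.
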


\begin{proof}
    Follows from Theorem~\ref{thm_hkk} and Theorem~\ref{thm_transfer}.
    The assumptions of the latter theorem are satisfied by~\eqref{spherical_is_shift}.
\end{proof}

%=========================================================
\section{RGB algebras from perverse schobers}\label{subsec:BGAschober}
%=========================================================

Consider a weighted marked surface $\sow$ and an integer $n\geq 2$, which we require to be a common multiple of the degrees of the interior singular points of $\sow$. We further choose a mixed angulation of $\sow$ with dual S-graph $\Sgh$. The perverse schober $\psG$ locally arises from finite group quotients of the perverse schobers associated with $n$-gons in~\cite{Chr21b}. Its local sections thus describe finite group quotients of local relative Ginzburg algebras. 

The two main results of this section are as follows: Firstly, we describe the $\infty$-category of global sections of $\psG$ as the derived $\infty$-category of the Koszul dual $\GSn$ of the RGB algebra, see \Cref{thm:BGAschober}. This Koszul dual $\GSn$ further describes a global finite group quotient of a (slightly generalized) relative Ginzburg algebra associated with an $n$-angulated surface, see \Cref{pp:covering}. Secondly, we show that $\psG$ admits a positve arc-system kit, see \Cref{prop:BGAschoberarcsystem}, so that the main result of \cite{CHQ23} applies to describe its space of stability conditions.

The construction of the perverse schober $\psG$ splits into four steps. The final, fourth step consists of gluing together local perverse schobers on the polygons of the mixed-angulation. The first three steps perform the construction of these local perverse schobers in increasing generality: we first focus on the case of degree $n$ vertices of $\Sgh$ of (maximal) valency $n$ and secondly on the case of of vertices of $\Sgh$ of finite degree $m\leq n$ and of (maximal) valency $m$. The passage from the first case to the second amounts to passing to a $\ZZ/\frac{n}{m}$-quotient, making use of the rotational symmetry of the local perverse schober on the $n$-spider/the relative Ginzburg algebra associated with an $n$-gon. Thirdly, we explain how to deal with vertices of $\Sgh$ of arbitrary weight and valency, including the weight $\infty$ case.\\

Before that, we recall some details from the Morita theory for dg-categories.

%============================================================================
\subsection{Morita theory for dg-categories}
%============================================================================
We denote by $\on{dgCat}$ the $1$-category of $k$-linear dg-categories and dg-functors. We denote by $\on{dgAlg}\subset \on{dgCat}$ the subcategory spanned by dg-categories with a single object, these are called dg-algebras. The $1$-category $\on{dgCat}$ admits the quasi-equivalence model structure, whose weak equivalences $W$ are the dg-functors which induce an equivalence on the level of homotopy categories and quasi-isomorphisms on the morphism chain complexes. We denote the $\infty$-category underlying this model category by $\on{dgCat}[W^{-1}]$. Note that colimits in $\on{dgCat}[W^{-1}]$ can be computed as homotopy colimits in $\on{dgCat}$, see \cite[Section 7.9]{Cis}. There is a colimit preserving functor of $\infty$-categories
\[ \mathcal{D}(\mhyphen):\on{dgCat}[W^{-1}]\rightarrow \on{LinCat}_k\,,\]
mapping a dg-category to its derived $\infty$-category of right dg-modules, see for instance \cite[Section~2.5]{Chr22}.
Note that for a dg-algebra $A$, the image $\mathcal{D}(A)$ is the $\infty$-categorical version of the usual unbounded derived category of the dg-algebra, which justifies the notation $\mathcal{D}(\mhyphen)$. Two dg-categories are called Morita equivalent, if their dg-categories of perfect modules are quasi-equivalent. The functor $\mathcal{D}(\mhyphen)$ sends Morita equivalences to equivalences of $\infty$-categories. Given a dg-category $C$ with finitely many objects $x_1,\dots,x_m$, there is a Morita equivalent dg-algebra $C^{\on{alg}}=\bigoplus_{1\leq i,j\leq m}C(x_i,x_j)$, arising as the sum of all morphism chain complexes in $C$.

We say that a $(\mathbb{Z}$-)graded $k$-linear $1$-category $C$ with finitely many objects is the path category of a ($\mathbb{Z}$-)graded quiver $Q$, if the set of objects of $C$ is given by the set of vertices of $Q$ and the morphisms in $C$ are freely generated over $k$ by the (allowed) composites of the graded arrows of $Q$. In this case, to lift $C$ to a dg-category $C$, it suffices to specify the differentials of the generators, which are the arrows of $Q$. The corresponding Morita equivalent dg-algebra $C^{\on{alg}}$ is given by the path algebra of the quiver $Q$ with differential determined on generators as in $C$.

Given a dg-category $C$, we denote by $\on{dgMod}(C)$ the dg-category of right dg-modules. The action of $C$ on a right dg-module $M$ satisfies the Koszul sign rule
\[ d_M(m.c)=d_M(m).c+(-1)^{\on{deg}(m)}m.d_C(c)\]
for all $c\in C$, $m\in M$.

%=========================================================
\subsection{Conventions on perverse schobers}
%=========================================================

We briefly state the definition of a parametrized perverse schober. We refer to \cite[Section 3]{CHQ23} and \cite[Sections 3,4]{Chr22} for more background.

For $n\in\mathbb{N}_{\geq 1}$, we let $\rgraph_{n}$ be the ribbon graph with a single vertex $v$ and $n$ incident external edges. We also call $\rgraph_n$ the $n$-spider.  Let $R$ be an $\mathbb{E}_\infty$-ring spectrum, for example $R=k$ a commutative ring. An $R$-linear $\infty$-category is a module over $\on{LMod}_R$ in the $\infty$-category $\mathcal{P}r^L_{\on{St}}$ of stable, presentable $\infty$-categories. We denote the $\infty$-category of $R$-linear $\infty$-categories by $\on{LinCat}_R$.

\begin{definition}\label{def:schobernspider}
Let $n\geq 1$. An $R$-linear perverse schober parametrized by the $n$-spider, or on the $n$-spider for short, consists of the following data:
\begin{enumerate}
\item[(1)] If $n=1$, an $R$-linear spherical adjunction
\[ F\colon \mathcal{V}\longleftrightarrow \mathcal{N}\cocolon G\,,\]
i.e.~an adjunction whose twist functor $T_{\mathcal{V}}=\on{cof}(\on{id}_{\mathcal{V}}\xrightarrow{\on{unit}}GF)\in \on{Fun}(\mathcal{V},\mathcal{V})$ and cotwist functor $T_{\mathcal{N}}=\on{fib}(FG\xrightarrow{\on{counit}}\on{id}_{\mathcal{N}})\in \on{Fun}(\mathcal{N},\mathcal{N})$ are equivalences. The functors $F,G$ are also called spherical functors \cite{AL17}.
\item[(2)] If $n\geq 2$, a collection of $R$-linear adjunctions
\[ (F_i\colon \mathcal{V}^n\longleftrightarrow \mathcal{N}_i\cocolon G_i)_{i\in \mathbb{Z}/n\mathbb{Z}}\]
satisfying that
\begin{enumerate}
    \item $G_i$ is fully faithful, i.e.~$F_iG_i\simeq \on{id}_{\mathcal{N}_i}$ via the counit,
    \item $F_{i}\circ G_{i+1}$ is an equivalence of $\infty$-categories,
    \item $F_i\circ G_j\simeq 0$ if $j\neq i,i+1$,
    \item $G_i$ admits a right adjoint $\on{radj}(G_i)$ and $F_i$ admits a left adjoint $\on{ladj}(F_i)$ and
    \item $\on{fib}(\on{radj}(G_{i+1}))=\on{fib}(F_{i})$ as full subcategories of $\mathcal{V}^n$.
\end{enumerate}
\end{enumerate}
Given a collection of functors $(F_i\colon \mathcal{V}^n\rightarrow \mathcal{N}_i)_{i\in \mathbb{Z}/n}$, we will consider it as a perverse schober on the $n$-spider if there exist adjunctions $(F_i\dashv \on{radj}(F_i))_{i\in \mathbb{Z}/n\mathbb{Z}}$ which define a perverse schober on the $n$-spider.
\end{definition}

Given a ribbon graph $\rgraph$, we denote by $\on{Exit}(\rgraph)$ its exit-path category, whose objects are the vertices and edges of $\rgraph$ and non-identity morphisms go form the vertices to the edges according to incidence. Given a collection of functors $(F_i)_{i\in \mathbb{Z}/n}$ as in \Cref{def:schobernspider}, we can equivalently encode these (without their adjoints) as a functor $\on{Exit}(\rgraph_n)\to \on{LinCat}_R$ as follows: we choose a total order on the cyclically ordered $n$ edges incident to $v$, labeling them by $e_1,\dots,e_n$ accordingly. The functor $\on{Exit}(\rgraph_n)\to \on{LinCat}_R$ is then defined by mapping the morphism $v\to e_i$ to the functor $F_i$. 

\begin{definition}\label{def:schober}
Let $\rgraph$ be a ribbon graph. A functor $\mathcal{F}\colon \on{Exit}(\rgraph)\rightarrow \on{LinCat}_R$ is called an $R$-linear $\rgraph$-parametrized perverse schober if for each vertex $v$ of $\rgraph$, the restriction of $\mathcal{F}$ to $\on{Exit}(\rgraph)_{v/}$ determines a perverse schober parametrized by the $n$-spider in the sense of \Cref{def:schobernspider}.
\end{definition}

It is of course also possible to consider perverse schobers valued the $\infty$-category of all stable $\infty$-categories, as in \Cref{cor:intro}. We restrict in the main text to linear and presentable $\infty$-categories for technical convenience. 

\begin{definition}
Let $\mathcal{F}\colon \on{Exit}(\rgraph)\rightarrow \on{LinCat}_R$ be an $R$-linear $\rgraph$-parametrized perverse schober. The $\infty$-category $\glsec(\rgraph,\mathcal{F})\in \on{LinCat}_R$ of global sections of $\mathcal{F}$ is defined as the limit of $\mathcal{F}$.
\end{definition}

%============================================================================
\subsection{The construction of the perverse schober}\hfill\par
\medskip
%============================================================================
\noindent  \underline{\bf Step 1:} Definition for the $n$-spider with vertex of degree $n$.

\begin{definition}\label{def:Gn1}
Let $n\geq 2$.
\begin{enumerate}
    \item[(1)]We denote by $Q_{n,n}$ the graded quiver with vertices $1,\dots,n$ and arrows $\alpha_{i,j}\colon i\rightarrow j$ for all $1\leq i,j \leq n$, $i\neq j$, as well as loops $l_i,L_i\colon i\to i$ for all $1\leq i\leq n$. The degree of $\alpha_{i,j}$ is given by
    \[ \on{deg}(\alpha_{i,j})=\begin{cases} j-i+1 & \text{if } j<i\,,\\ j-i+1-n & \text{if }j> i\,.\end{cases}\]
    The degrees of $L_i$ and $l_i$ are given by
    \[ \on{deg}(L_i)=\on{deg}(l_i)-1=1-n\,.\]
    \item[(2)] We let $G_{n,n}$, be the dg-category arising from the graded quiver $Q_{n,n}$, with the differentials determined on the generators by
\[ d(\alpha_{i,j})= \begin{cases} \sum_{j\leq k\leq i}(-1)^{j-k-1} \alpha_{k,j}\alpha_{i,k}& \text{if }j<i\\
\sum_{1\leq k \leq i}(-1)^{j-k+n-1} \alpha_{k,j}\alpha_{i,k}+ \sum_{j\leq k\leq n}(-1)^{j-k-1} \alpha_{k,j}\alpha_{i,k}& \text{if }j>i
\,,\end{cases}\]
\[ d(l_i)=0\,,\]
and
\[ d(L_i)=-l_i+\sum_{j<i}(-1)^{i-j+1-n} \alpha_{j,i}\alpha_{i,j}+\sum_{j>i}(-1)^{i-j-1} \alpha_{j,i}\alpha_{i,j}\,.\]
\item[(3)] Let $k[t_{2-n}]$ be the graded polynomial algebra with $|t_{2-n}|=2-n$.
We specify the dg-functor
\[ \psi_{n,n}\colon k[t_{2-n}]^{\amalg n}\longrightarrow G_{n,n}\,,\]
by mapping the generator $t_{2-n}$ in the $i$-th component $k[t_{2-n}]$ to $l_i$. It is straightforward to check that this is a cofibration between cofibrant dg-categories, with respect to the quasi-equivalence model structure on $\on{dgCat}$, by verifying the left lifting property with respect to acyclic fibrations.
\end{enumerate}
\end{definition}

For $1\leq i\leq n$, we denote by $\pi_i\colon \mathcal{D}(k[t_{2-n}]^{\amalg n})\simeq \mathcal{D}(k[t_{2-n}])^{\oplus n}\rightarrow \mathcal{D}(k[t_{2-n}])$ the projection to the $i$-th direct summand and by $\iota_i$ the two-sided adjoint of $\pi_i$, given by the inclusion of the $i$-th direct summand.

\begin{proposition}\label{prop:quotschobm=1}
The collection of adjunctions
\[ ((\psi_{n,n})_!\circ \iota_i\colon \mathcal{D}(k[t_{2-n}])\longleftrightarrow \mathcal{D}(G_{n,n})\cocolon \pi_i\circ \psi_{n,n}^*)_{1\leq i\leq n}\]
defines a perverse schober on the $n$-spider.
\end{proposition}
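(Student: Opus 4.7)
My plan is to verify the conditions of Definition~\ref{def:schobernspider} for the given collection of adjunctions. The first step is to observe that each pair is a genuine adjunction, since $(\psi_{n,n})_! \dashv \psi_{n,n}^*$ (base change) composes with $\iota_i \dashv \pi_i$ (inclusion/projection of a direct summand) to give $(\psi_{n,n})_! \iota_i \dashv \pi_i \psi_{n,n}^*$. The existence of the further adjoints required by Definition~\ref{def:schobernspider}(2)(d) is automatic: $\iota_i$ is two-sided adjoint to $\pi_i$, and $\psi_{n,n}^*$ admits a right adjoint $(\psi_{n,n})_*$ in the presentable setting, so every functor in sight is part of a longer chain of adjoints.

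The substantive part is the verification of conditions (a)--(c). Via the projection formula for base-change of dg-modules, the composite $\pi_i \circ \psi_{n,n}^* \circ (\psi_{n,n})_! \circ \iota_j$ is naturally equivalent to the functor $N \mapsto N \otimes_{k[l_j]} G_{n,n}(j, i)$, where $G_{n,n}(j, i)$ is the morphism complex from the object $j$ to the object $i$, viewed as a $k[l_j]$-$k[l_i]$-bimodule. The three conditions thus reduce to three quasi-isomorphism statements about these morphism complexes: (i) $G_{n,n}(i,i) \simeq k[l_i]$ as a $k[l_i]$-bimodule; (ii) for the pair $(j,i)$ prescribed by Definition~\ref{def:schobernspider}(2)(b), $G_{n,n}(j, i)$ is an invertible shifted bimodule; and (iii) $G_{n,n}(j, i) \simeq 0$ otherwise.

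Step (i) is direct: the relation $d(L_i) = -l_i + \sum_{k \neq i}(\pm)\alpha_{k,i}\alpha_{i,k}$ identifies $l_i$ with the signed sum of length-two cycles through neighbors of $i$ in cohomology, and a short induction collapses all higher cycles at vertex $i$ onto the subalgebra generated by $l_i$. Step (ii) is also direct: the single arrow realizing the bimodule is a cocycle (the terms in its defining differential vanish since they would involve a non-existent $\alpha_{k,k}$), and its powers with $l_i$ exhaust the morphism complex up to quasi-isomorphism.

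The main obstacle will be step (iii), acyclicity of $G_{n,n}(j, i)$ for non-adjacent $i, j$. The plan here is to use the formulas in Definition~\ref{def:Gn1} for $d(\alpha_{j, i})$, which express each non-adjacent arrow as a signed sum of length-two compositions through intermediate vertices, and to construct an explicit contracting homotopy by induction on the cyclic distance between $i$ and $j$. The degree and sign conventions of Definition~\ref{def:Gn1} are arranged precisely so that the telescoping cancellations work out and the resulting complex is contractible. Once (i)--(iii) are in hand, the fiber compatibility condition~(e) follows from the evident cyclic $\mathbb{Z}/n$-symmetry of $G_{n,n}$ rotating the vertices, combined with the semi-orthogonal picture emerging from the preceding steps.
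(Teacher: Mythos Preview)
Your approach is different from the paper's: the paper simply cites prior work (the local model of \cite[Prop.~3.7]{CHQ23} together with \cite[Prop.~4.25]{Chr21b}, which already identifies these functors with that model for the Morita-equivalent dg-category $D_n$), whereas you attempt a direct verification of Definition~\ref{def:schobernspider} via the morphism complexes of $G_{n,n}$. That is a reasonable strategy, and the cohomology facts you aim for are essentially \cite[Lemma~4.27]{Chr21b} (compare Lemma~\ref{lem:homologyGnm} here).

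There is, however, a genuine gap in how you match your computation to the axioms. In Definition~\ref{def:schobernspider} the functor $G_i$ is the \emph{right} adjoint of $F_i$; with $\mathcal{V}^n=\mathcal{D}(G_{n,n})$ and $F_i=\pi_i\circ\psi_{n,n}^*$ this means $G_i=(\psi_{n,n})_*\circ\iota_i$. The functor $(\psi_{n,n})_!\circ\iota_i$ you work with is the \emph{left} adjoint of $F_i$. Consequently the composite you compute, $\pi_i\psi_{n,n}^*(\psi_{n,n})_!\iota_j$, is $F_i\circ\on{ladj}(F_j)$ rather than $F_i\circ G_j$, and your (ii)--(iii) do not literally verify (b)--(c). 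For (a) the discrepancy is harmless, since for an adjoint triple $L\dashv F\dashv R$ one has $L$ fully faithful iff $R$ fully faithful; but for (b)--(c) the non-vanishing occurs at $j=i-1$ rather than $j=i+1$, and bridging this requires knowing that $G_i\simeq(\psi_{n,n})_!\iota_{i-1}$ up to an involutive autoequivalence. That identification is precisely the extra input used in the paper's proof of Proposition~\ref{prop:quotschob} (via \cite[Cor.~2.5.16]{DKSS19} and the twist computation), and you do not supply it.

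Your handling of condition~(e) is also too thin: cyclic symmetry reduces the check to a single index but does not by itself identify $\on{fib}(\on{radj}(G_{i+1}))$ with $\on{fib}(F_i)$. This again hinges on the missing description of $G_i$. The paper's route avoids both issues by matching with a local model where all five conditions hold by construction.
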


\begin{proof}
To prove that these adjunctions form a perverse schober, we check that they describe (up to composition with an involutive autoequivalence) the local model of a perverse schober of \cite[Prop.~3.7]{CHQ23}. This description in terms of the local model is the content of \cite[Prop.~4.25]{Chr21b}, using that $G_{n,n}$ is Morita equivalent to the dg-category denoted $D_n$ in loc.~cit. The dg-category $D_n$ is obtained from $G_{n,n}$ by simply discarding the loops $L_i$ and $l_i$, as in \Cref{rem:reducedGinzburg}.
\end{proof}

The dg-category $G_{n,n}$ is Morita equivalent to the relative (higher) Ginzburg algebra associated with an $n$-gon, as considered in \cite{Chr21b}. The advantage of considering $G_{n,n}$ over this dg-algebra are the cofibrancy properties mentioned in \Cref{def:Gn1}.

Its rotational symmetry provides $G_{n,n}$ with a group action of $\ZZ/n$, i.e.~a functor $\xi_n\colon B\ZZ/n\rightarrow \on{dgCat}$ from the classifying space of $\ZZ/n$ into the the $1$-category of dg-categories, mapping $\ast\in B\ZZ/n$ to $G_{n,n}$. For any factor $m$ of $n$, we can restrict the group action to $\xi_{n,m}\colon B\ZZ/{\frac{n}{m}}\rightarrow \on{dgCat}$. We denote by $G_{n,n}^{\on{alg}}\in \on{dgAlg}$ the dg-algebra of morphisms in $G_{n,n}$, which is Morita equivalent to $G_{n,n}$. The group action $\xi_{n,m}$ induces a group action $\xi_{n,m}^{\on{alg}}\colon B\ZZ/{\frac{n}{m}}\rightarrow \on{dgAlg}$, mapping $\ast \in B\ZZ/{\frac{n}{m}}$ to $G_{n,n}^{\on{alg}}$. In the next step, we construct a perverse schober on the $m$-spider from the derived $\infty$-category of the dg-algebra of fixed points of $\xi_{n,m}^{\on{alg}}$.

\medskip
\noindent \underline{\bf Step 2:} Construction for the $m$-spider with vertex of degree $m$ dividing $n$.

\begin{definition}
The dg-algebra $G_{n,m}^{\on{alg}}$ is defined as the fixed points (also called invariants)
\[ G_{n,m}^{\on{alg}}\coloneqq \lim \xi_{n,m}^{\on{alg}}\in \on{dgAlg}\,.\]
\end{definition}

We emphasize that the above limit is taken in the $1$-category of dg-algebras. We note that this limit is also equivalent to the colimit in the $1$-category of dg-algebras, i.e.~to the quotient dg-algebra. Concretely, $G_{n,m}^{\on{alg}}$ is the dg-algebra arising from the graded quotient quiver $Q_{n,m}=Q_{n,n}/\ZZ/{\frac{n}{m}}$,
(with the quotient differential).

We denote by $G_{n,m}$ the dg-category arising from the graded quiver $Q_{n,m}$, such that the differentials are as in $G_{n,m}^{\on{alg}}$. We will sometimes consider the paths in $Q_{n,n}$ via the quotient map on quivers as morphisms in $G_{n,m}$.

We proceed by showing that $G_{n,m}^{\on{alg}}$ also describes the homotopy fixed points of $\xi_{n,m}^{\on{alg}}$, with respect to the quasi-isomorphism model structure on $\on{dgAlg}$ (this model structure is defined for instance in \cite[7.1.4.5]{HA}).

\begin{lemma}\label{lem:BZholim}
Suppose that $\on{char}(k)\neq a$. Let $C$ be a $k$-linear model category such that $\on{Fun}(B\ZZ/a,\C)$ admits the injective model structure. Consider a diagram \[\xi\colon B\ZZ/a\rightarrow C\] valued in fibrant objects and fibrations. Then any limit cone of $\xi$ is also a homotopy limit cone.
\end{lemma}

\begin{proof}
It suffices to show that the diagram $\xi$ defines a fibrant object with respect to the injective model structure on the $1$-category of functors $\on{Fun}(B\ZZ/{a},C)$, whose weak equivalences and cofibrations are the pointwise weak equivalences and cofibrations, respectively. For this, we verify the right lifting property with respect to acyclic cofibration. Let $\alpha\rightarrow \beta$ be an acyclic cofibration in $\on{Fun}(B\ZZ/{a},C)$ together with a natural transformation $\alpha\to \xi$. Evaluating at the unique object $\ast\in B\ZZ/{a}$, we can find a lift in $C$ \[
\begin{tikzcd}
\alpha(\ast) \arrow[d] \arrow[r]         & \xi(\ast) \\
\beta(\ast) \arrow[ru, "\kappa", dashed] &
\end{tikzcd}
\]
since $\xi(\ast)$ is fibrant. Let $q\colon \ast\rightarrow \ast\in \ZZ/{a}$ be the endomorphism corresponding to the generator $1\in \ZZ/{a}$. Using that $\on{char}(k)\neq a$, we can define $\kappa'=\frac{1}{a}\sum_{i=0}^{a-1} \xi(q)^{a-i}\kappa \beta(q)^i$, satisfying $\kappa'\circ \beta(q)=\xi(q)\circ \kappa'$. The map $\kappa'$ hence defines the desired lift in $\on{Fun}(\on{B}\ZZ/{a},C)$
\[
\begin{tikzcd}
\alpha \arrow[d] \arrow[r]          & \xi\,, \\
\beta \arrow[ru, "\kappa'", dashed] &
\end{tikzcd}
\]
concluding the proof.
\end{proof}

\begin{corollary}\label{cor:Gnmhtplim}
Suppose that $\on{char}(k)\neq \frac{n}{m}$. Then $G_{n,m}^{\on{alg}}$ is both the homotopy limit and homotopy colimit of $\xi_{n,m}^{\on{alg}}$ with respect to the quasi-isomorphism model structure on $\on{dgAlg}$.
\end{corollary}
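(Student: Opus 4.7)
The homotopy limit part follows directly by applying Lemma~\ref{lem:BZholim} to the diagram $\xi_{n,m}^{\on{alg}}$ and the quasi-isomorphism model structure on $\on{dgAlg}$. One checks the hypotheses: $\on{char}(k)\neq n/m$ is the standing assumption; in this model structure fibrations are surjections on the underlying chain complexes, so every dg-algebra is fibrant, and in particular so is $\xi_{n,m}^{\on{alg}}(*)=G_{n,n}^{\on{alg}}$; finally, the generator of $\ZZ/\frac{n}{m}$ acts by the rotation automorphism of $G_{n,n}^{\on{alg}}$, which is an isomorphism and hence a fibration. The lemma then identifies the $1$-categorical invariants $G_{n,m}^{\on{alg}}=\lim \xi_{n,m}^{\on{alg}}$ with $\on{holim}\xi_{n,m}^{\on{alg}}$.

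For the colimit, my plan is to realize $\on{hocolim}\xi_{n,m}^{\on{alg}}$ as the derived orbits $G_{n,n}^{\on{alg}}\otimes^{L}_{k[\ZZ/\frac{n}{m}]}k$, interpreted inside the $\infty$-category of $E_{1}$-algebras in chain complexes, which is equivalent to $\on{dgAlg}[W^{-1}]$. The invertibility of $n/m$ in $k$ then renders $k[\ZZ/\frac{n}{m}]$ semisimple, so this derived tensor product agrees with the underived one $G_{n,n}^{\on{alg}}\otimes_{k[\ZZ/\frac{n}{m}]}k$. The underived tensor product is precisely the $1$-categorical coinvariants, identified in the paragraph preceding the statement with $G_{n,m}^{\on{alg}}$.

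The main obstacle is that a direct dualization of the averaging argument of Lemma~\ref{lem:BZholim} to the cofibrancy side is not available: cofibrations in $\on{dgAlg}$ are not degreewise injections, and the non-additivity of $\on{dgAlg}$ prevents us from averaging dg-algebra morphisms along the source. I would bypass this either by carrying out the averaging at the level of underlying $k$-modules, where the argument of Lemma~\ref{lem:BZholim} transfers verbatim, and then checking that the resulting quasi-isomorphism of complexes is compatible with the algebra structure inherited from $G_{n,n}^{\on{alg}}$; or equivalently by invoking vanishing of the Tate construction for the $\ZZ/\frac{n}{m}$-action on $G_{n,n}^{\on{alg}}$ under the characteristic hypothesis, which is the classical formulation of the fact that the norm map from homotopy orbits to homotopy fixed points is an equivalence.
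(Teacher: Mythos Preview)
Your treatment of the homotopy limit is exactly the paper's: apply Lemma~\ref{lem:BZholim} using that every dg-algebra is fibrant and that automorphisms are fibrations.

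For the homotopy colimit the paper takes a much shorter route than you do: it simply invokes the formal dual of Lemma~\ref{lem:BZholim}. Since $G_{n,n}^{\on{alg}}$ is also \emph{cofibrant} and every isomorphism is a \emph{cofibration}, the dual argument exhibits $\xi_{n,m}^{\on{alg}}$ as projectively cofibrant, so its $1$-categorical colimit computes the homotopy colimit. No passage through derived tensor products, semisimplicity of $k[\ZZ/\frac{n}{m}]$, or Tate vanishing is invoked.

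Your objection to this dualization---that $\on{dgAlg}$ is not additive, so one cannot average dg-algebra homomorphisms---is a reasonable worry, but notice that it applies \emph{equally} to the limit case you accepted without comment. The proof of Lemma~\ref{lem:BZholim} constructs the equivariant lift $\kappa'$ by summing morphisms $\beta(\ast)\to\xi(\ast)$ in $C$, and for $C=\on{dgAlg}$ these are algebra maps, whose $k$-linear combinations need not be multiplicative. So you are being inconsistent: either this concern is not fatal (for instance because the averaging can be carried out in the underlying $k$-linear model category $\on{Ch}(k)$, where the model structure on $\on{dgAlg}$ is transferred from and where weak equivalences and homotopy (co)limits are detected), in which case the dual argument for the colimit works just as well and your detour is unnecessary; or the concern is genuine, in which case your limit argument is incomplete too, and your proposed repairs (chain-level averaging, or the norm/Tate equivalence) should be applied symmetrically to both halves of the statement.
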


\begin{proof}
This follows from \Cref{lem:BZholim} since $G_{n,n}$ is fibrant and cofibrant, and any isomorphism is a (co)fibration.
\end{proof}

\begin{lemma}\label{lem:homologyGnm}~
Suppose that $\on{char}(k)\neq \frac{n}{m}$.
\begin{enumerate}
    \item[(1)] The cohomology of $G_{n,m}^{\on{alg}}$ is equivalent to $k[t_{2-n}]^{\oplus 2 m}$ and generated over $k$ by \[ \{l_i^q,l_{i}^q\alpha_{i+1,i}\}_{q\geq 0,\,1\leq i\leq m}\,.\]
    \item[(2)] The path $\alpha_{i+1,i}l_{i+1}$ is cohomologous to $(-1)^{n}l_{i}\alpha_{i+1,i}$.
\end{enumerate}
\end{lemma}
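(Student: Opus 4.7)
The proof reduces to a computation on the ``untwisted'' algebra $G_{n,n}^{\on{alg}}$ followed by extraction of the invariants. By construction, $G_{n,m}^{\on{alg}}$ is the $1$-categorical limit of $\xi_{n,m}^{\on{alg}}$, hence the subalgebra of $\ZZ/(n/m)$-invariants of $G_{n,n}^{\on{alg}}$ under the rotation action. Under the hypothesis $\on{char}(k)\neq n/m$, the averaging operator is a $k$-linear retraction commuting with the differential, so the functor of invariants is exact and
\[ H^*(G_{n,m}^{\on{alg}}) \cong H^*(G_{n,n}^{\on{alg}})^{\ZZ/(n/m)}. \]
It therefore suffices to compute $H^*(G_{n,n}^{\on{alg}})$ and then to identify its invariant part.

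The plan for computing $H^*(G_{n,n}^{\on{alg}})$ is to establish $\{l_i^q,\ l_i^q\alpha_{i+1,i}\}_{q\geq 0,\,1\leq i\leq n}$ (indices modulo $n$) as a $k$-basis. Cocycle status is immediate: $dl_i=0$ by definition, while applying the formula for $d\alpha_{i,j}$ to $\alpha_{i+1,i}$ produces only terms involving the non-existent generators $\alpha_{p,p}$, forcing $d\alpha_{i+1,i}=0$; Leibniz then closes off $l_i^q$ and $l_i^q\alpha_{i+1,i}$. For spanning, the key input is a direct inspection showing that for $k\geq 2$, the differential $d\alpha_{i+k,i}$ contains (up to sign) the $k$-step backward concatenation $\alpha_{i+1,i}\alpha_{i+2,i+1}\cdots\alpha_{i+k,i+k-1}$ plus strictly shorter paths. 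Induction on length then makes every composition of two or more consecutive backward arrows a coboundary, and the relation $dL_i=-l_i+\sum_{j\neq i}\pm\alpha_{j,i}\alpha_{i,j}$ identifies $l_i$ in cohomology with the ``round-trip'' sum at $i$, iteratively producing the powers $l_i^q$. Linear independence follows by filtering $G_{n,n}^{\on{alg}}$ by the number of $l$-factors and observing that the associated graded is a free module over $k[l_1,\ldots,l_n]$ on the displayed basis. Taking $\ZZ/(n/m)$-invariants groups the $2n$ classes into $2m$ orbits of size $n/m$, whose normalized orbit sums recover the listed basis of $G_{n,m}^{\on{alg}}$, proving~(1).

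For~(2), the strategy is a dimension argument refined by an explicit primitive. By~(1), the subspace of $H^{2-n}(G_{n,m}^{\on{alg}})$ consisting of classes from vertex $i+1$ to vertex $i$ is one-dimensional, spanned by $[l_i\alpha_{i+1,i}]$; consequently $\alpha_{i+1,i}l_{i+1}$ and $l_i\alpha_{i+1,i}$ are proportional in cohomology, so only the scalar needs to be pinned down. To do so, I would consider
\[ X=\alpha_{i+1,i}\,L_{i+1}+(-1)^{n+1}\,L_i\,\alpha_{i+1,i} \]
in $G_{n,n}^{\on{alg}}$. A graded Leibniz computation, using $d\alpha_{i+1,i}=0$ and the formulas for $dL_{i+1}$, $dL_i$, yields
\[ dX=-\bigl(\alpha_{i+1,i}l_{i+1}-(-1)^n l_i\alpha_{i+1,i}\bigr)+R, \]
with $R$ an explicit sum of length-$3$ monomials in the $\alpha$'s. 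A direct check using the reductions from~(1) and the signs in $dL_p$ shows that $R$ is itself a coboundary, so the entire displayed class is a coboundary; since $X$ is $\ZZ/(n/m)$-invariant up to coboundary, the identity descends to $G_{n,m}^{\on{alg}}$.

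The main technical obstacle is the careful sign and length bookkeeping in part~(1): verifying that $d\alpha_{i+k,i}$ genuinely has the stated $k$-step concatenation as its leading term, with consistent signs, for all $k\geq 2$, including the ``wrap-around'' case where the summation range forces a switch between the ``$j<i$'' and ``$j>i$'' branches of the differential's definition. Once that reduction is in place, the coboundary argument in~(2) becomes a matter of explicit sign tracking, with the factor $(-1)^n$ emerging naturally from the grading shift $|L_p|=1-n$ entering the Leibniz rule.
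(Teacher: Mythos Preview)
Your overall strategy matches the paper's: reduce to $G_{n,n}^{\on{alg}}$ and take $\ZZ/(n/m)$-invariants. The paper phrases the reduction in $\infty$-categorical language (homotopy fixed points commute with the forgetful functor $\on{Alg}^{\on{dg}}(k)\to\mathcal{D}(k)$, invoking \cite[7.1.4.6, 3.2.2.5]{HA}), while you use the equivalent elementary averaging argument; both are fine under the characteristic hypothesis. The real divergence is that the paper does not compute $H^*(G_{n,n}^{\on{alg}})$ at all: it simply cites \cite[Lemma~4.27]{Chr21b} for the result $k[t_{2-n}]^{\oplus 2n}$ and for the explicit generators, and then observes that the cyclic action permutes summands. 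For part~(2) the paper likewise writes only ``follows from a direct computation''.

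Your attempt to supply the missing computation of $H^*(G_{n,n}^{\on{alg}})$ contains a genuine error. You claim that $d\alpha_{i+k,i}$ contains the $k$-step concatenation $\alpha_{i+1,i}\alpha_{i+2,i+1}\cdots\alpha_{i+k,i+k-1}$ as a leading term. But the differential on generators is \emph{quadratic}: each term of $d\alpha_{i,j}$ is a product of exactly two $\alpha$'s, so $d\alpha_{i+k,i}$ contains no monomial of length $k$ once $k\geq 3$. The inductive reduction you want does exist, but it goes via Leibniz applied to products such as $\alpha_{i+k-1,i}\,\alpha_{i+k,i+k-1}$ (whose differential produces a length-$k$ monomial from a length-$(k-1)$ one), not via the differential of a single generator. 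Your spanning argument is also incomplete as stated: it treats only backward concatenations and the round-trip identities coming from $dL_i$, but says nothing about monomials involving arrows $\alpha_{i,j}$ with $j>i$, mixed words, or words containing several $L$'s; the actual computation in \cite{Chr21b} handles all of these. So the plan for part~(1) is the right shape but would need substantial repair before it stands on its own.
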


\begin{proof}
We only prove part (1), part (2) follows from a direct computation.

The homotopy limit $G_{n,m}^{\on{alg}}$ of $\xi_{n,m}^{\on{alg}}$ describes the $\infty$-categorical limit of $\xi_{n,m}^{\on{alg}}$, considered as a diagram valued in the $\infty$-category $\on{Alg}^{\on{dg}}(k)$ underlying the model category $\on{dgAlg}$. By \cite[7.1.4.6, 3.2.2.5]{HA}, we find that taking the $\infty$-categorical limit commutes with the forgetful functor $\on{Alg}^{\on{dg}}(k)\rightarrow \mathcal{D}(k)$. The cohomology of $G_{n,n}^{\on{alg}}$ was shown in \cite[Lemma 4.27]{Chr21b} to be given by $k[t_{2-n}]^{\oplus 2n}$. The group $\ZZ/\frac{n}{m}$ acts on $k[t_{2-n}]^{\oplus 2n}\in \mathcal{D}(k)$ by cyclically permuting summands, its homotopy fixed points are hence given by $k[t_{2-n}]^{\oplus 2 m}$, as desired. The description of the generators of the cohomology follows from the corresponding description of the generators of $\on{H}^\ast(G_{n,n}^{\on{alg}})$ in the proof of Lemma 4.27 in \cite{Chr21b}.
\end{proof}

Consider the dg-functor
\[  \psi_{n,m}\colon k[t_{2-n}]^{\amalg m}\to G_{n,m}\,,\]
mapping the $i$-th generator $t_{2-n}$ to the loop $l_i$.

\begin{proposition}\label{prop:sphGnm}
Suppose that $\on{char}(k)\neq  \frac{n}{m}$. The adjunction
\[(\psi_{n,m})_!\colon \mathcal{D}(k[t_{2-n}]^{\amalg m})\longleftrightarrow \mathcal{D}(G_{n,m})\cocolon \psi_{n,m}^*\]
is spherical.
\end{proposition}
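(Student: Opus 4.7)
Set $A = k[t_{2-n}]^{\amalg m}$ and $B = G_{n,m}^{\on{alg}}$. I would verify the two sphericality conditions by direct analysis of the relevant bimodules. Under the Morita-theoretic dictionary, the adjunction $(\psi_{n,m})_! \dashv \psi_{n,m}^*$ corresponds to the dg-bimodule $B$ viewed as an $(A,B)$-bimodule via $\psi_{n,m}$, so the twist is tensor over $A$ with $C = \on{cof}(A \to B) \in \mathcal{D}(A \otimes_k A^{\on{op}})$, and the cotwist is tensor over $B$ with $D = \on{fib}(B \otimes_A B \to B) \in \mathcal{D}(B \otimes_k B^{\on{op}})$.

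For the twist, Lemma \ref{lem:homologyGnm}(1) yields $H^*(C) \cong k[t_{2-n}]^{\oplus m}$, with generators represented by the classes $\alpha_{i+1,i}$ lying in bidegree $(e_i, e_{i+1})$. Part (2) of the same lemma, $\alpha_{i+1,i} l_{i+1} \sim (-1)^n l_i \alpha_{i+1,i}$, identifies the left $k[l_i]$-action with the right $k[l_{i+1}]$-action on each generator up to the sign $(-1)^n$. This exhibits $C$ as an invertible $A$-bimodule implementing a sign-twisted cyclic shift of the $m$ idempotents, so tensor with $C$ is an auto-equivalence of $\mathcal{D}(A)$.

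For the cotwist, I would reduce to the case $m = n$ via the $\mathbb{Z}/(n/m)$-equivariance: $\psi_{n,n}$ is equivariant for the cyclic permutation on its source and the rotation action $\xi_{n,m}$ on its target, and by Corollary \ref{cor:Gnmhtplim} the homotopy fixed points recover $\psi_{n,m}$. Applying $\mathcal{D}(-)$ yields an equivariant adjunction, and under $\on{char}(k) \neq n/m$ an averaging argument analogous to Lemma \ref{lem:BZholim} commutes homotopy fixed points with the formation of $D$. In the case $m = n$, the perverse schober structure of Proposition \ref{prop:quotschobm=1}, in particular the fully faithfulness of each $G_i$ together with the remaining axioms of Definition \ref{def:schobernspider}, pins down $D$ as an invertible $B$-bimodule. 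The main obstacle is this last compatibility: assembling the axioms for the individual $F_i$'s and $G_i$'s into an equivariant description of the total cotwist requires care with signs, shifts, and the interplay between the cyclic automorphism and the fully-faithful embeddings.
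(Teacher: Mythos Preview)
Your treatment of the twist is essentially the paper's: both arguments pass to the Morita-equivalent algebra map $\psi_{n,m}^{\on{alg}}$, invoke \Cref{lem:homologyGnm} to identify the cohomology of the cofiber as $k[t_{2-n}]^{\oplus m}$ on the generators $\alpha_{i+1,i}$, and read off from part~(2) that the resulting $A$-bimodule implements a sign-twisted cyclic permutation of the idempotents. The paper records the explicit left and right actions of $t_{2-n}$ on the basis $\{l_j^q,\,l_j^q\alpha_{j+1,j}\}$, but the content is the same.

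For the cotwist the approaches diverge. The paper does not reduce to $m=n$; it simply remarks that ``a similar computation shows that the cotwist functor is also invertible,'' meaning one computes $B\otimes_A B$ as a $B$-bimodule directly from the same explicit generators and actions used for the twist. Your route through $\ZZ/(n/m)$-equivariance and \Cref{prop:quotschobm=1} is considerably more involved, and the two steps you lean on are not secured. First, \Cref{lem:BZholim} concerns limits in a model category and does not by itself show that the derived tensor product $B\otimes_A B$ (a homotopy colimit) commutes with passage to homotopy fixed points; this needs a separate argument. Second, as you yourself flag, extracting invertibility of the total cotwist from the axioms of \Cref{def:schobernspider} is not immediate---though note that sphericality for $m=n$ is already available from the reference cited in the proof of \Cref{prop:quotschobm=1}, so you need not rederive it from the axioms. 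Either way, the direct computation of the cotwist, parallel to what you already did for the twist, is both shorter and complete; that is the paper's choice.
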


\begin{proof}
Consider the functor
\[ (\psi_{n,m}^{\on{alg}})_!\colon \mathcal{D}(k[t_{2-n}]^{\oplus m}) \simeq \mathcal{D}(k[t_{2-n}]^{\amalg m}) \xrightarrow{(\psi_{n,m})_!} \mathcal{D}(G_{n,m})\simeq \mathcal{D}(G_{n,m}^{\on{alg}})\,.\] This functor arises from the morphism of dg-algebras
\[ \psi_{n,m}^{\on{alg}}\colon k[t_{n-2}]^{\oplus m}\xlongrightarrow{\oplus_{i}(t_{2-n}\mapsto l_i)} G_{n,m}^{\on{alg}}\,.\]
We can equivalently prove that $(\psi_{n,m}^{\on{alg}})_!\dashv (\psi_{n,m}^{\on{alg}})^*$ is a spherical adjunction.

\Cref{lem:homologyGnm} shows that $(\psi_{n,m}^{\on{alg}})^*(\psi_{n,m}^{\on{alg}})_!(k[t_{2-n}]^{\oplus m})\simeq k[t_{2-n}]^{\oplus 2m}$, as a $k[t_{2-n}]^{\oplus m}$-bimodule. The left and right actions of $t_{2-n}$ lying in the $i$-th direct summand on the generators $\{l_j^q,l_j^q\alpha_{j,j+1}\}_{q\geq 0,\,1\leq j\leq m}$ are given by
\begin{align*} t_{2-n}l_j^q=l_j^q.t_{2-n}&=\begin{cases} l_i^{q+1} & i=j\\ 0 & i\neq j\end{cases}\\
t_{2-n}.l_j^q\alpha_{j+1,j}& =\begin{cases} l_i^{q+1}\alpha_{i+1,i} & i=j\\ 0 & i\neq j\end{cases}\\
l_j^q\alpha_{j+1,j}.t_{2-n}& =\begin{cases} (-1)^nl_j^{q+1}\alpha_{j+1,j} & i=j+1 \on{mod} n\\ 0 & i\neq j+1\on{mod} n\end{cases}
\end{align*}
The twist functor of the adjunction $(\psi_{n,m}^{\on{alg}})_!\dashv (\psi_{n,m}^{\on{alg}})^*$ is hence given by an autoequivalence of $\mathcal{D}(k[t_{2-n}])^{\oplus m}\simeq \mathcal{D}(k[t_{2-n}]^{\oplus m})$ which permutes the components cyclically by one step and acts on each component via the involution $\phi_!$ arising from the dg-algebra morphism
\begin{equation}\label{eq:phiinvolution}
\phi:k[t_{2-n}]\xrightarrow{t_{2-n} \mapsto (-1)^nt_{2-n}} k[t_{2-n}]\,.
\end{equation}

A similar compution shows that the cotwist functor is also invertible, giving us the desired sphericalness of the adjunction.
\end{proof}

For $1\leq i\leq m$, we denote by $\pi_i\colon \mathcal{D}(k[t_{2-n}]^{\amalg m})\simeq \mathcal{D}(k[t_{2-n}])^{\oplus m}\leftrightarrow \mathcal{D}(k[t_{2-n}])\noloc \iota_i$ the adjunction from above. We arrive at the generalization of \Cref{prop:quotschobm=1}:

\begin{proposition}\label{prop:quotschob}
Suppose that $\on{char}(k)\neq \frac{n}{m}$. The collection of functors
\[ (\pi_i\circ \psi_{n,m}^*\colon \mathcal{D}(G_{n,m})\longrightarrow  \mathcal{D}(k[t_{2-n}]))_{1\leq i\leq m} \]
define a perverse schober on the $m$-spider.
\end{proposition}

\begin{proof}
If $m=1$, $\psi_{n,m}^*$ is by \Cref{prop:sphGnm} a spherical functor, which amounts to a perverse schober on the $1$-spider. Suppose that $m\neq 1$. It follows from the description of the twist functor of the adjunction $(\psi_{n,m})_!\dashv \psi_{n,m}^*$ in the proof of \Cref{prop:sphGnm} and \cite[Cor.~2.5.16]{DKSS19} that $\pi_{i}\circ \psi_{n,m}^*$ is, after composition with the autoequivalence $\phi_!$ of $\mathcal{D}(k[t_{2-n}])$ from \eqref{eq:phiinvolution}, left adjoint to $(\psi_{n,m})_!\circ \iota_{i-1}$. These adjunctions, together with the description of the functor $\psi_{n,m}^*(\psi_{n,m})_!$ in the proof of \Cref{prop:sphGnm}, directly imply all conditions of \Cref{def:schobernspider}.
\end{proof}

\medskip
\noindent \underline{\bf Step 3:} Construction for the $r$-spider with vertex $v$ of degree $m\geq r$, with $m$ dividing $n$.

\begin{construction}\label{constr:Fnmr}
Given an $l\in \mathbb{N}$, we denote by $\Sgh_{l}$ the ribbon graph with a single vertex $v$ and $l$ incident external edges. Let $1\leq r\leq m$. Note that the following two types of data are equivalent\footnote{One passes from the data as in i) to data as in ii) by setting $d(a,b)=l$ for two consecutive halfedges $a,b$ of $\spider_{k}$ if $b$ follows in $\spider_j$ after the halfedge $a$ after $l$ steps.}:
\begin{enumerate}
\item[i)] an identification of the halfedges of $\Sgh_{r}$ with a subset of the halfedges of $\Sgh_{m}$, respecting the total orders of the halfedges.
\item[ii)] integers $d(a,b)\geq 1$ for all consecutive halfedges of $\Sgh_{r}$ at $v$, such that the sum of all these integers is equal to $m$.
\end{enumerate}
The latter type of data arises if $\Sgh_r$ is the subgraph of an S-graph consisting of an $r$-valent interior vertex of degree $m$ and its incident halfedges.

We choose a total order of the halfedges of $\Sgh_m$, compatible with their cyclic order. With data as in i) above, this also determines a total order of the halfedges of $\Sgh_r$.
Consider the $\Sgh_{m}$-parametrized perverse schober from \Cref{prop:quotschob}, in the following denoted $\hF_{n,m}$. We can obtain from $\hF_{n,m}$ an $\Sgh_{r}$-parametrized perverse schober, denoted $\hF_v^n$, by replacing $\hF_{n,m}(v)$ with the kernel of all functors $\hF_{n,m}(v\xrightarrow{c}e_c)$, with $c\in e_c$ a halfedge of $\Sgh_{m}$ which is not a halfedge of $\Sgh_{r}$, see \cite[Prop.~3.6]{CHQ23}. We remark that the notation $\hF_v^n$ omits the dependence on the 'S-graph data' ii) above at the vertex $v$.

Concretely, $\hF_v^n$ is equivalent to the derived category of the dg-category, denoted $G_{v}^n$, with
\begin{itemize}
\item objects the halfedges of $\Sgh_{r}$, considered in the following as a subset of halfedges $1,\dots,m$ of $\Sgh_{m}$.
\item morphisms freely generated by the arrows of the graded subquiver $Q_v^n$ of $Q_{n,m}$, whose vertices are the halfedges of $\Sgh_r$, and whose arrows consist of those which both begin and end at halfedges of $\Sgh_r$.
\item differentials on generators defined as in $G_{n,m}$, but setting to zero all paths containing an arrow which does not both begin and end at halfedges of $\Sgh_r$.
\end{itemize}

The dg-category $G_v^n$ also describes a homotopy pushout of $G_{n,m}$ along the dg-functor $k[t_{n-2}]^{\amalg m-r}\to 0^{\amalg  m-r}$. We note that the notation $G_{v}^n$ again leaves choices of data as in i) or ii) implicit. Let $1\leq i\leq r$ and $j$ be such that the $i$-th halfedge of $S_r$ is identified with the $j$-th halfedge of $S_m$. The functor $\hF_v^n(v\rightarrow i)$ is equivalent to the pullback along the dg-functor (i.e.~the right adjoint of the image under $\mathcal{D}(\mhyphen)$)
\begin{equation}\label{eq:ci'} k[t_{2-n}]\xrightarrow{t_{2-n}\mapsto l_j} G_v^n\,.\end{equation}
\end{construction}

\begin{construction}\label{constr:Fnr}
Let $r\geq 1$ and consider the ribbon graph $\Sgh_r$ consisting of a single vertex $v$ and $r$ incident external edges $e_1,\dots,e_r$. Suppose that the edges of $\Sgh_r$ are equipped with a total order, compatible with their given cyclic order. Below, graphs of this form describe the part of an extended ribbon graph of an S-graph lying at a boundary vertex $v$ of valency $r-1$. From this perspective, the terminal edge in the total order corresponds to the virtual halfedge in the sense of \Cref{def:extdgraph}. We thus ask that each pair of subsequent edges $i,i+1$ is equipped with an integer $d(i,{i+1})\geq 1$, where ${i+1}\neq r$ is not the terminal edge in the total order. We denote $d(j,i)=d(j,{j+1})+d({j+1},{j+2})+\dots+d({i-1},i)$ for all $j<i$.

Consider the spherical functor $0\colon 0\to \mathcal{D}(k[t_{2-n}])$. There is a corresponding $\Sgh_r$-parametrized perverse schober, denoted $\hF_r(0)$, given by the collection of functors
\[\left(\varrho_i\colon \on{Fun}(\Delta^{r-2},\mathcal{D}(k[t_{2-n}]))\longrightarrow  \mathcal{D}(k[t_{2-n}])\right)_{i\in \mathbb{Z}/r}\]
with $\varrho_1$ the pullback along the inclusion $\Delta^{0}\simeq \Delta^{\{r-2\}}\subset \Delta^{r-2}$ of the final vertex into the $n$-simplex and $\varrho_i$ the $(2i-2)$-fold left adjoint of $\varrho_1$, see \cite[Prop.~3.7]{CHQ23}. 
We modify $\hF_r(0)$ by composing $\hF_r(0)(v\to e_{i})$ with the autoequivalence $\phi_!^{i+1-r}[i-1-d(1,i)]$ for all $1\leq i \leq r-1$, setting $d(1,1)=0$ and with $\phi$ the involution from \eqref{eq:phiinvolution}. We denote, again abusively, the resulting $\Sgh_r$-parametrized perverse schober by $\hF_v^n$.
\end{construction}

\begin{lemma}\label{lem:Gnr} Consider the setup of \Cref{constr:Fnr} (and recall that $v$ should be thought of as an $(r-1)$-valent boundary vertex of an S-graph). The $\infty$-category of global sections $\glsec(\Sgh_r,\hF_v^n)$ is equivalent to the derived $\infty$-category of the dg-category, denoted $G_{v}^n$, arising from the graded quiver $Q_{v}^n$ with vertices $1,\dots,r-1$ and arrows
\begin{itemize}
    \item $\alpha_{i,j}\colon i\rightarrow j$ in degree $1-d(j,i)$ for all $1\leq j<i\leq r-1$,
    \item $\beta_{i,j}\colon i\rightarrow j$ in degree $2-n-d(j,i)$ for all $1\leq j\leq i\leq r-1$, Note that $i=j$ is allowed, with $d(i,i)=0$,
\end{itemize}
and with differential determined on generators by
\[ d(\alpha_{i,j})=\sum_{j<k<i} (-1)^{d(j,k)-1} \alpha_{k,j}\alpha_{i,k}\]
and
\[ d(\beta_{i,j})=\sum_{j< k \leq i}-\alpha_{k,j}\beta_{i,k} + \sum_{j\leq k <i} (-1)^{n+d(j,k)} \beta_{k,j}\alpha_{i,k}\,.\]
\end{lemma}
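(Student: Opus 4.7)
The plan is to reduce the computation of global sections to the value of the schober at the unique vertex $v$, and then identify this value as the derived $\infty$-category of $G_v^n$ via an explicit compact generator. First, I would observe that the ribbon graph $\Sgh_r$ has a single vertex $v$ with $r$ external edges, so its exit path category is the ``star'' poset with $v$ as minimum and the $e_i$ as maxima, and all non-trivial morphisms originate at $v$. The limit of $\hF_v^n$ over this poset in $\on{LinCat}_R$ is therefore just
\[
\glsec(\Sgh_r, \hF_v^n) \simeq \hF_v^n(v).
\]
The modifications applied in \Cref{constr:Fnr} (twists by $\phi_!^{i+1-r}[i-1-d(1,i)]$) affect only the edge restriction functors $\hF_v^n(v \to e_i)$ and leave the value at $v$ unchanged. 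Invoking \cite[Prop.~3.7]{CHQ23} applied to the spherical functor $0 \to \mathcal{D}(k[t_{2-n}])$ from which $\hF_v^n$ is built, we then identify $\hF_v^n(v) \simeq \on{Fun}(\Delta^{r-2}, \mathcal{D}(k[t_{2-n}]))$.

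Next, I would exhibit a compact generator $P = \bigoplus_{i=1}^{r-1} P_i$ of $\on{Fun}(\Delta^{r-2}, \mathcal{D}(k[t_{2-n}]))$, where $P_i$ is the image under the left adjoint of $\hF_v^n(v \to e_i)$ of the standard generator $k[t_{2-n}]$ of the target. Concretely, up to the shift $[i-1-d(1,i)]$ and the involution $\phi_!^{i+1-r}$ built into \Cref{constr:Fnr}, $P_i$ is the corepresentable ``step'' functor on $\Delta^{r-2}$ supported on positions $\geq i-1$, tensored with $k[t_{2-n}]$. Computing $\on{End}(P)$ via the fiber-cofiber sequences $P_{i+1} \to P_i \to P_i/P_{i+1}$ reveals two natural families of morphisms: a \emph{direct} family corresponding to arrows $\alpha_{i,j}$ in degree $1-d(j,i)$, arising from telescoping the shifts $[i-1-d(1,i)]$ together with the elementary transition maps of the step functors; and a \emph{wrapped} family corresponding to arrows $\beta_{i,j}$ in degree $2-n-d(j,i)$, arising from the cotwist $[1-n]$ of the underlying spherical adjunction combined with the loops $t_{2-n}$ (the latter giving $\beta_{i,i}$). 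The differentials follow from compositions in the functor category: $d\alpha_{i,j}$ telescopes into length-two products $\alpha_{k,j}\alpha_{i,k}$ for intermediate $k$, while $d\beta_{i,j}$ records the interaction of the $\alpha$- and $\beta$-families under the dg-structure.

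The main obstacle is the precise sign bookkeeping, in particular the sign $(-1)^{n+d(j,k)}$ appearing in the $\beta_{k,j}\alpha_{i,k}$-term of $d\beta_{i,j}$. This requires carefully combining (i) the involution $\phi$ from \eqref{eq:phiinvolution}, which produces a $(-1)^n$ factor each time $t_{2-n}$ is transported through a twisted edge restriction, (ii) the telescoping shifts $[i-1-d(1,i)]$, and (iii) the intrinsic Koszul signs on $\on{Fun}(\Delta^{r-2}, \mathcal{D}(k[t_{2-n}]))$. Once these signs are verified generator-by-generator, one obtains a quasi-isomorphism $G_v^n \xrightarrow{\sim} \on{End}(P)$ of dg-algebras, and since $P$ is a compact generator this yields the desired equivalence $\glsec(\Sgh_r, \hF_v^n) \simeq \mathcal{D}(G_v^n)$. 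As a consistency check, the resulting $G_v^n$ agrees, up to sign conventions, with the boundary-vertex part of the Koszul dual of the RGB algebra computed in \Cref{subsec_koszuldual}, as expected from the Koszul-duality philosophy underlying the whole construction.
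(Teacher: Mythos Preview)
Your reduction $\glsec(\Sgh_r,\hF_v^n)\simeq \hF_v^n(v)\simeq \on{Fun}(\Delta^{r-2},\mathcal{D}(k[t_{2-n}]))$ is exactly how the paper begins, and your choice of compact generator $P=\bigoplus_i P_i$ is the correct one. The divergence comes in the identification of $\on{End}(P)$ with $G_v^n$.

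The paper does \emph{not} attempt to read off the free dg-category $G_v^n$ directly from $\on{End}(P)$. Instead it introduces an auxiliary dg-category $B_v^n$ with the same objects $1,\dots,r-1$ but generated only by the elementary arrows $\alpha_{i+1,i}$ and the loops $\beta_{i,i}$, subject to the \emph{relations} $\alpha_{i+1,i}\alpha_{i+2,i+1}=0$ and $\beta_{i,i}\alpha_{i+1,i}=\alpha_{i+1,i}\beta_{i+1,i+1}$, with vanishing differential. This $B_v^n$ is visibly Morita equivalent to the upper-triangular dg-algebra over $k[t_{2-n}]$ that computes $\on{Fun}(\Delta^{r-2},\mathcal{D}(k[t_{2-n}]))$ via \cite[Prop.~2.39]{Chr22}, so $\mathcal{D}(B_v^n)\simeq\hF_v^n(v)$ is immediate. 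The remaining work is to write down an explicit dg-functor $\pi\colon G_v^n\to B_v^n$ (sending $\alpha_{i+1,i}\mapsto\alpha_{i+1,i}$, $\beta_{i,i}\mapsto(-1)^{n(r-1-i)}\beta_{i,i}$, all other generators to zero) and prove it is a quasi-equivalence by \emph{induction on $r$}, using that $G_{v^1}^n$ and $G_{v^{r-1}}^n$ sit inside $G_v^n$ as full dg-subcategories and comparing $\on{RHom}$'s between the projective modules $M_i$, together with the vanishing $\on{RHom}(M_1,M_{r-1})\simeq 0$.

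Your outline conflates two genuinely distinct tasks: defining a map $G_v^n\to\on{End}(P)$ (which does reduce to sign bookkeeping) and proving it is a quasi-isomorphism. You treat the latter as automatic once the signs are verified, but it is not: $G_v^n$ is freely generated by many arrows $\alpha_{i,j},\beta_{i,j}$ with a nontrivial differential, while the natural presentation of $\on{End}(P)$ has few generators, nontrivial relations, and zero differential. Showing these two dg-categories have the same cohomology is the actual content of the lemma, and the paper's inductive argument is what supplies it. Your proposal, as written, is missing this step.
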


\begin{proof}
Let $B_v^n$ be the dg-category with objects $1,\dots,r-1$, morphism complexes generated by the graded morphisms
\begin{itemize}
    \item $\alpha_{i+1,i}\colon i+1\rightarrow i$ in degree $1-d(i,i+1)$ for all $1\leq i\leq r-2$,
    \item $\beta_{i,i}\colon i\rightarrow i$ in degree $2-n$ for all $1\leq i\leq r-1$,
\end{itemize}
subject to the relations $\alpha_{i+1,i}\alpha_{i+2,i+1}=0$ and $\beta_{i,i}\alpha_{i+1,i}=\alpha_{i+1,i}\beta_{i+1,i+1}$. All differentials of morphisms in $B_v^n$ are understood to vanish. The dg-category $B_v^n$ is Morita equivalent to the upper triangular dg-algebra $(B_v^{n})^{\on{alg}}$
\[
\begin{pmatrix}
k[t_{2-n}] & k[t_{2-n}][d({r-2},{r-1})-1] &  \dots & 0 & 0 & 0 \\
0 & k[t_{2-n}] & \dots & 0 &0 &0\\
\vdots & \vdots & \ddots& \vdots& \vdots& \vdots\\
0 & 0 & \dots  & k[t_{2-n}] & k[t_{2-n}][d(2,3)-1] & 0 \\
0 & 0  & \dots& 0 & k[t_{2-n}] & k[t_{2-n}][d(1,2)-1] \\
0 & 0  & \dots& 0 & 0& k[t_{2-n}]
\end{pmatrix}
\]
see also \cite[Def.~2.36]{Chr22} for what we mean by upper-triangular dg-algebra. By \cite[Prop.~2.39]{Chr22}, there exists an equivalence of $\infty$-categories
\[ \mathcal{D}(B_{n,r}^{\on{alg}})\simeq \hF_v^n(v)\simeq \glsec(\Sgh_r,\hF_v^n)\,.\]

Consider the dg-functor $\pi\colon G_v^n\to B_v^n$, mapping the object $i$ to the object $i$, mapping $\alpha_{i+1,i}$ to $\alpha_{i+1,i}$, $\beta_{i,i}$ to $(-1)^{n(r-1-i)}\beta_{i,i}$ and all other generators to $0$. We prove that $\pi$ is a quasi-equivalence by performing an induction on $r$ and arguing by analyzing the derived Homs in the derived $\infty$-categories. In the cases $r=1,2,3,4$, it follows from a straightforward direct computation that $\pi$ is a quasi-equivalence. We proceed with the induction step. Each object $i$ of $G_v^n$ defines a projective $G_v^n$ dg-module $M_i$, whose underlying chain complex is generated by paths beginning at $i$. Denote by $M_i'\in \mathcal{D}(B_v^n)$ the image of $M_i$ under $\pi$. Let $\rgraph^i$ be the subgraph of $\rgraph_r$ consisting of the edges $1,\dots,i-1,i+1,\dots r$.
Denote the vertex of $\rgraph^i$ by $v^i$.

Note that $G_{v^1}^n,G_{v^{r-1}}^n$ and $B_{v^1}^n,B_{v^{r-1}}^n$ are full dg-subcategories of $G_v^n$ and $B_v^n$. Applying the induction assumption thus yields that
\begin{equation}\label{eq:homeq} \mathcal{D}(\pi)\colon \on{RHom}_{\mathcal{D}(G_v^n)}(M_i,M_j)\simeq \on{RHom}_{\mathcal{D}(B_{v}^n)}(M_{i}',M_{j}')\end{equation}
is an equivalence for all $1\leq i,j\leq r-1$, such that $(i,j)\neq (1,r-1)$. In the next paragraph, we prove that $\on{RHom}_{\mathcal{D}(G_v^n)}(M_1,M_{r-1})\simeq 0$. Using that $G_v^n\simeq \bigoplus_{1\leq i\leq r-1} M_i$ and $B_v^n\simeq \bigoplus_{1\leq i\leq r-1} M_i'$, we then obtain the quasi-isomorphism
\begin{align*} \pi^{\on{alg}}\colon (G_v^n)^{\on{alg}}\simeq & \on{RHom}_{\mathcal{D}(G_v^n)}(G_v^n,G_v^n)\simeq \on{RHom}_{\mathcal{D}(G_v^n)}\left(\bigoplus_{1\leq i\leq r-1} M_i,\bigoplus_{1\leq j\leq r-1}M_j\right)\\
\simeq &\on{RHom}_{\mathcal{D}(B_v^n)}\left(\bigoplus_{1\leq i\leq r-1} M_i',\bigoplus_{1\leq j\leq r-1}M_j'\right) \simeq \on{RHom}_{\mathcal{D}(B_v^n)}(B_v^n,B_v^n)\simeq (B_v^n)^{\on{alg}}\,,\end{align*}
implying that $\pi$ is indeed a quasi-equivalence.

Consider the cofibration $\alpha_i:k[t_{2-n}]\xrightarrow{t_{2-n}\mapsto \beta_{i,i}} G_v^n$. Its cofiber and homotopy cofiber coincide and are given by $G_{v^i}^n$. Applying $\mathcal{D}(\mhyphen)$, we obtain a fully faithful functor $\mathcal{D}(\alpha_i)\simeq (\mhyphen)\otimes_{k[t_{2-n}]}M_i:\mathcal{D}(k[t_{2-n}])\to \mathcal{D}(G_v^n)$ with right adjoint $\on{radj}(\mathcal{D}(\alpha_i))=\on{RHom}_{\mathcal{D}(G_v^n)}(M_i,\mhyphen)$. Choose any $1<i<r-2$, e.g.~$i=2$. As follows from \eqref{eq:homeq}, we have
\[ M_1,M_{r-1}\in \on{fib}(\on{radj}(\mathcal{D}(\alpha_i)))\simeq \on{cof}(\mathcal{D}(\alpha_i))\subset \mathcal{D}(G_v^n)\,.\]
Under the identification $\on{cof}(\mathcal{D}(\alpha_i))\simeq \mathcal{D}(G_{v^i}^n)$, $M_1$ and $M_{r-1}$ are identified with $M_1$ and $M_{r-2}$, respectively. We thus have
\[ \on{RHom}_{{\mathcal{D}(G_v^n)}}(M_1,M_{r-1})\simeq \on{RHom}_{\on{fib}(\on{radj}(\mathcal{D}(\alpha_i)))}(M_1,M_{r-1})\simeq \on{RHom}_{{\mathcal{D}(G_{v^i}^n)}}(M_1,M_{r-2})\simeq 0\,,
\]
as desired.
\end{proof}

We remark that the shifts and composites with $\phi_!$ in \Cref{constr:Fnr} have been set up such that $\hF_v^n(v\rightarrow e_i)$ is given by the pullback along the dg-functor
\begin{equation}\label{eq:betaii} k[t_{2-n}]\rightarrow G_{v}^n,\,t_{2-n}\mapsto \beta_{i,i}\,,\end{equation}
for any $1\leq i\leq r-1$.

\medskip
\noindent \underline{\bf Step 4:} Gluing together the local perverse schobers from step 3.

We fix a weighted marked surface $\sow$ with a mixed-angulation $\AS$ and dual S-graph $\Sgh=\dAS$. We assume that $\on{char}(k)$ is not divided by $\mathfrak{n}{m}$ for each $m$ appearing as the degree of a singular point $x\in \Delta\cap \on{int}({\bf S})$. To correctly incorporate vertices of the S-graph $\Sgh$ lying at boundary singular points into the formalism of perverse schobers, we introduce the following:

\begin{definition}\label{def:extdgraph}
The \textit{extended graph} $\eS$ of the S-graph $\Sgh$ is obtained by adding an external edge to $\Sgh$ at each boundary vertex. This edge is placed at the final position in the total order of the halfedges at the boundary vertex, which induces a compatible cyclic order of the halfedges. Hence, we consider $\eS$ as a ribbon graph. We call these so added external edges, and their halfedges, virtual. The non-virtual edges of $\eS$ can thus be identified with the edges of $\Sgh$.
\end{definition}

\begin{construction}\label{constr:Frgraph}
For each internal vertex of $\Sgh$ of degree $m<\infty$ and valency $r\leq m$, we have by \Cref{constr:Fnmr} a perverse schober on the $r$-spider $\hF_v^n$. Similarly, we have for each boundary vertex of $\Sgh$ with valency $r-1$ by \Cref{constr:Fnr} a perverse schober on the $r$-spider $\hF_{v}^n$.

For each edge $e$ of $\Sgh$, we choose an incident vertex $v$ and compose $\hF_v^n(v\rightarrow e)$ with the involution of $\hF_v^n(e)=\mathcal{D}(k[t_{2-n}])$ arising from the dg-functor $k[t_{2-n}]\xrightarrow{t_{2-n}\mapsto (-1)^n t_{2-n}} k[t_{2-n}]$, to obtain the perverse schober denoted $(\hF_v^n)'$. This step is necessary to ensure correct signs later on.

We define the $\eS$-parametrized perverse schober $\psG$ by gluing these perverse schobers $(\hF_v^n)'$, i.e.~$\psG$ is the unique diagram which restricts to the diagrams $(\hF_v^n)'$ on the subgraphs $\Sgh_r\subset \eS$.
\end{construction}

%============================================================================
\subsection{Global sections and the Koszul duals of RGB algebras}
%============================================================================

\begin{construction}\label{constr:Grgraph}
For each vertex $v$ of $\Sgh$, we have by \Cref{constr:Fnmr} and \Cref{lem:Gnr} a dg-category $G_{v}^n$, arising from the graded quiver $Q_v^n$. We define the functor $$\underline{G}({\Sgh},n):\on{Exit}(\Sgh)^{\on{op}}\rightarrow \on{dgCat}$$ by mapping
\begin{itemize}
\item each vertex $v$ to the dg-category $G_v^n$,
\item each edge $e$ to the dg-algebra $k[t_{2-n}]$ and
\item each morphism $v\xrightarrow{a} e$, with $v$ a vertex and $a$ an incident halfedge, to the dg-functor $k[t_{2-n}]\to G_v^n$ corresponding to that halfedge described in \eqref{eq:ci'} or \eqref{eq:betaii}. We denote this dg-functor for later use by $\zeta_{v,a}$.
\end{itemize}
We define dg-category $\GSn$ as the ($1$-categorical) colimit of $\underline{G}({\Sgh},n)$. We can describe $\GSn$ as follows:

Let $\QSn$ be the graded quiver with
\begin{itemize}
\item vertices the edges of $\Sgh$,
\item and arrows obtained by including for each vertex $v$ all arrows of $Q_v^n$ in $\QSn$ (via the apparent map from the set of vertices of $Q_v^n$ to the vertices of $\QSn$). For each edge $e$ of $\Sgh$, we have added two loops at $e$ lying in degree $2-n$, arising as the images of $t_{2-n}$ in \eqref{eq:ci'} or \eqref{eq:betaii}. We further identify these two loops in $\QSn$ and denote the resulting loop by $l_e$.
\end{itemize}
The graded category underlying $\GSn$ is the path category of the graded quiver $\QSn$. The differential of a generator $\alpha$ lying in the subset $G_v^n\subset \GSn$ given by the same formula as for $d(\alpha)$ in the dg-category $G_v^n$. Note that for an edge $e$ of $\Sgh$, we have $d(l_e)=0$.
\end{construction}

\begin{remark}\label{rem:reducedGinzburg}
We can reduce the dg-category $\GSn$ to a quasi-equivalent and 'smaller', yet still cofibrant, dg-category $\GSn^{\on{rd}}$ as follows: for each edge $e$ of $\Sgh$ connecting two interior vertices, we discard the loop $l_e$. We identify the two loops in degree $1-n$ at that vertex of $\QSn$, which we denote in the following by $L,L'$, and denote the resulting degree $1-n$ loop by $L_e$. We set its differential to be
\[ d(L_e)=d(L)-d(L')\,.\]
For each edge $e$ of $\Sgh$ connecting an interior vertex with a boundary vertex, we also discard the loop $l_e$, as well as the degree $1-n$ loop at $e$.

Recall that given a dg-category $C$ with finitely many objects, we denote by $C^{\on{alg}}$ the Morita-equivalent dg-algebra of morphisms in $C$. The dg-algebra $(G(\Sgh,3)^{\on{rd}})^{\on{alg}}$, see \Cref{rem:reducedGinzburg}, is isomorphic to a relative Ginzburg algebra of a triangulated surface in the sense of \cite{Chr22}. 

The Morita-equivalent dg-algebra $(\GSn^{\on{rd}})^{\on{alg}}$ is isomorphic to the Koszul dual $A(\Sgh,n)^!$ of the dg-algebra $A(\Sgh,n)$ defined in \Cref{subsec_koszuldual}.

In particular, in \Cref{pp:covering}, 
the dual RGB algebra $A(\widetilde{\Sgh},n)^!$ is isomorphic to the generalized relative Ginzburg dg-algebra $G(\widetilde{\Sgh},n)$ associated to the $n$-angulation $\widetilde{\mathbb{A}}$ considered in \cite{Chr22}. 
\end{remark}

\begin{example}
In \Cref{ex:relGinzburg}, 
Figure~\ref{fig:Q1} and Figure~\ref{fig:Q2} shows precisely the quiver $Q(\Sgh,3)^{\on{rd}}$ and $Q(\Sgh^\flat,3)^{\on{rd}}$, respectively. 
The edge at which we flip corresponds to the vertex $1$ of $Q(\Sgh,3)^{\on{rd}}$.

In \Cref{ex:quot1}, 
the quiver $Q(\Sgh,4)^{\on{rd}}$ describing the dg-category $G(\Sgh,4)^{\on{rd}}$ is depicted in \Cref{fig:Q3}.
\end{example}

\begin{theorem}\label{thm:BGAschober}
There exists an equivalence of $\infty$-categories
\[ \glsec(\eS,\psG)\simeq \mathcal{D}(\GSn)\,,\]
with $\psG$ the $\eS$-parametrized perverse schober from \Cref{constr:Frgraph} and $\GSn$ the dg-algebra from \Cref{constr:Grgraph}.
\end{theorem}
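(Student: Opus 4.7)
The plan is to identify both $\glsec(\eS,\psG)$ and $\mathcal{D}(\GSn)$ with one and the same limit in $\on{LinCat}_k$. Concretely, we aim at a limit indexed over $\on{Exit}(\Sgh)$ whose value at a vertex $v$ is $\mathcal{D}(G_v^n)$, whose value at an edge $e$ is $\mathcal{D}(k[t_{2-n}])$, and whose structure morphisms are the pullback functors $\zeta_{v,a}^*$ along the dg-functors $\zeta_{v,a}\colon k[t_{2-n}]\to G_v^n$ from \Cref{constr:Grgraph}.

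For the right-hand side, $\GSn$ is by \Cref{constr:Grgraph} the $1$-categorical colimit of $\underline{G}(\Sgh,n)\colon \on{Exit}(\Sgh)^{\on{op}}\to \on{dgCat}$. The structure dg-functors $\zeta_{v,a}$ are cofibrations between cofibrant dg-categories---this propagates from the cofibrancy of $\psi_{n,n}$ in \Cref{def:Gn1} through the homotopy-pushout presentations of $G_{n,m}$ and $G_v^n$ employed in \Cref{constr:Fnmr} and \Cref{lem:Gnr}---so the $1$-colimit agrees with the homotopy colimit. Applying the colimit-preserving functor $\mathcal{D}(-)$ gives $\mathcal{D}(\GSn)\simeq \on{colim}_{\on{Exit}(\Sgh)^{\on{op}}}\mathcal{D}\circ \underline{G}(\Sgh,n)$ in $\on{LinCat}_k$. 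A colimit in $\on{LinCat}_k$ of a diagram of colimit-preserving functors is canonically equivalent to the limit of the pointwise right-adjoint diagram; since the right adjoint of $\mathcal{D}(\zeta_{v,a})$ is the pullback $\zeta_{v,a}^*$, this produces the limit presentation above.

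For the left-hand side, I would cover $\eS$ by the local subgraphs $\Sgh_r$ at each vertex $v$, each containing $v$ and all halfedges of $\eS$ incident to $v$ (including any virtual one at a boundary vertex). These subgraphs pairwise intersect only along the edges of $\Sgh$. The descent of limits along this cover, combined with the observation that $\glsec(\Sgh_r,\hF_v^n)\simeq \hF_v^n(v)$ for spiders with only external edges, yields
\[
\glsec(\eS,\psG)\simeq \lim\Bigl(\prod_{v}\glsec(\Sgh_r,\hF_v^n)\rightrightarrows \prod_{e\in\on{Edges}(\Sgh)}\mathcal{D}(k[t_{2-n}])\Bigr),
\]
with the two parallel arrows on each edge restricting to its two halfedges. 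By \Cref{prop:quotschob} at interior vertices and \Cref{lem:Gnr} at boundary vertices, $\glsec(\Sgh_r,\hF_v^n)\simeq \mathcal{D}(G_v^n)$, and under this equivalence the restriction functors at halfedges coincide with the pullbacks $\zeta_{v,a}^*$ appearing in \eqref{eq:ci'} and \eqref{eq:betaii}. This matches the limit obtained for $\mathcal{D}(\GSn)$.

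The most delicate step will be the sign bookkeeping. The composition with the involution $\phi_!$ along one incidence per edge introduced in \Cref{constr:Frgraph} is exactly what ensures that the two restrictions from the endpoints of an edge $e$ identify the degree $2-n$ loops at the two incident halfedges into a single loop $l_e$ in $\GSn$, rather than into $l_e$ and $(-1)^n l_e$. Carrying these signs consistently through the identification of right-adjoint structure maps with perverse-schober structure maps is where the argument requires the most care. The descent step itself is a formal rearrangement of $\on{Exit}(\eS)$ along the chosen cover, in line with the perverse-schober gluing formalism of \cite{Chr22} used throughout the paper.
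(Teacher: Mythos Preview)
Your overall strategy coincides with the paper's: reduce both sides to the same (co)limit over $\on{Exit}(\Sgh)$ by passing between the limit of right adjoints and the colimit of left adjoints, and identify the latter with $\mathcal{D}(\GSn)$ because the $1$-categorical colimit of $\underline{G}(\Sgh,n)$ already computes the homotopy colimit. Your descent/covering description of the left-hand side is a mild repackaging of the paper's one-line finality argument for $\on{Exit}(\Sgh)\subset\on{Exit}(\eS)$, so there is no real divergence there.

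There is, however, a genuine gap in your cofibrancy step. You assert that because each $\zeta_{v,a}$ is a cofibration between cofibrant dg-categories, the $1$-colimit of $\underline{G}(\Sgh,n)$ agrees with its homotopy colimit. This implication is false in general: pointwise cofibrancy of the objects and of the individual structure maps does not make a diagram cofibrant in the projective model structure on $\on{Fun}(\on{Exit}(\Sgh)^{\on{op}},\on{dgCat})$, which is what is needed for the ordinary colimit to model the homotopy colimit. For a bipartite indexing poset such as $\on{Exit}(\Sgh)^{\on{op}}$, the correct condition is that for every vertex $v$ the \emph{coproduct} map
\[
\coprod_{a\text{ halfedge at }v}\zeta_{v,a}\colon\ k[t_{2-n}]^{\amalg r}\longrightarrow G_v^n
\]
is a cofibration, not merely each factor. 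The paper isolates precisely this criterion as a separate lemma (\Cref{lem:bipartitecof}) and then verifies it; the coproduct cofibrancy holds here because the images $l_i$ (resp.\ $\beta_{i,i}$) of the generators land at pairwise distinct objects of $G_v^n$, so the coproduct map is again a free extension by generating cofibrations. You should replace the ``so'' in your argument with this verification.
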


\begin{proof}
The inclusion $\on{Exit}(\Sgh)\subset \on{Exit}(\eS)$ is final (i.e.~composition preserves limits) by Quillen's Theorem A \cite[\href{https://kerodon.net/tag/02NY}{Theorem 02NY}]{Ker}, so that
\[ \on{lim}_{\on{Exit}(\Sgh)}\psG \simeq \on{lim}_{\on{Exit}(\eS)}\psG\,.\]
We consider $\psG$ as a diagram $\on{Exit}(\eS)\to \mathcal{P}r^R_{\on{St}}$ and let $\psG^{\on{L}}$ be the left adjoint diagram of $\psG$, obtained by composing $\psG$ with the equivalence of $\infty$-categories $\on{ladj}(\mhyphen)\colon \mathcal{P}r^R_{\on{St}}\simeq (\mathcal{P}r^L_{\on{St}})^{\on{op}}$. We hence have an equivalence of $\infty$-categories
\[  \on{lim}_{\on{Exit}(\Sgh)} \psG \simeq \on{colim}_{{\on{Exit}(\Sgh)}^{\on{op}}}  \psG^L\,.\]
To prove the Theorem, we may thus proceed with describing $\on{colim}_{{\on{Exit}(\Sgh)}^{\on{op}}}  \psG^L$.

The functor $\psG^L$ (restricted to ${\on{Exit}(\Sgh)}^{\on{op}}$) factors through the colimit preserving functor $\mathcal{D}(\mhyphen):\on{dgCat}[W^{-1}]\rightarrow \on{LinCat}_k\to \mathcal{P}r_{\on{St}}^L$ via the composite of the localization functor $N(\on{dgCat})\to \on{dgCat}[W^{-1}]$ with the functor $\underline{G}({\Sgh},n):\on{Exit}(\Sgh)^{\on{op}}\rightarrow \on{dgCat}$ from \Cref{constr:Grgraph}. By definition, $\GSn$ is the colimit of $\underline{G}({\Sgh},n)$.

It follows from \Cref{lem:bipartitecof} below that $\underline{G}({\Sgh},n)$ defines a cofibrant object with respect to the projective model strutures on the $1$-category $\on{Fun}({\on{Exit}(\Sgh)}^{\on{op}},\on{dgCat})$. Hence, the colimit of $\underline{G}({\Sgh},n)$ coincides with its homotopy colimit and thus describes the $\infty$-categorical colimit. We thus find the desired equivalence of $\infty$-categories
\[ \glsec(\eS,\psG)\simeq \on{colim}_{{\on{Exit}(\Sgh)}^{\on{op}}}  \mathcal{D}(\underline{G}({\Sgh},n))\simeq  \mathcal{D}(\on{colim}_{{\on{Exit}(\Sgh)}^{\on{op}}} \underline{G}({\Sgh},n)) \simeq \mathcal{D}(\GSn)\,. \]
\end{proof}

\begin{lemma}\label{lem:bipartitecof}
Let $P$ be a finite, bipartite poset with partition sets $X,Y\subset P$ and morphisms going from $X$ to $Y$. Let $C$ be a model category with finite coproducts and $F:P\to C$ a diagram valued in cofibrant objects. Assume further that, for all $y\in P$, the morphism
\[ \coprod_{\alpha:x\to y \in X/y}F(\alpha)\colon \coprod_{\alpha:x\to y \in X/y}F(x) \longrightarrow F(y)\]
is a cofibration in $C$, where $X/y=X\times_P P/y$ is the relative over-category. Then $F$ defines a cofibrant object in the category $\on{Fun}(P,C)$ with the projective model structure. In particular, the colimit of $F$ coincides with the homotopy colimit of $F$.
\end{lemma}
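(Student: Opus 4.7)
The plan is to realize $P$ as a \emph{direct category} via the two-level degree function $|x|=0$ for $x\in X$ and $|y|=1$ for $y\in Y$, and then appeal to the standard fact that for a direct category the Reedy and projective model structures on $\on{Fun}(P,C)$ coincide. Hence it suffices to verify that $F$ is Reedy cofibrant, which amounts to checking that the latching map $L_p F \to F(p)$ is a cofibration in $C$ for every $p\in P$.

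I would carry out the computation of the latching objects in two cases. For $p=x\in X$ the latching category $\partial(P/x)$ is empty, since all non-identity morphisms of $P$ originate in $X$ and end in $Y$; therefore $L_x F$ is the initial object of $C$ and the latching condition reduces to $F(x)$ being cofibrant, which is part of the hypothesis. For $p=y\in Y$ the latching category $\partial(P/y)$ is the full subcategory of $P/y$ on objects of degree strictly less than $1$, i.e.\ on the set of $x\in X$ with $x<y$; since no non-identity morphisms exist among these $x$'s (they would have to go from $X$ to $X$), this latching category is discrete and
\[
L_y F \;=\; \coprod_{\alpha\colon x\to y}F(x),
\]
so the latching map $L_y F\to F(y)$ is exactly the morphism the hypothesis declares to be a cofibration.

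The second claim, that the colimit coincides with the homotopy colimit, then follows formally: the constant-diagram functor $C\to\on{Fun}(P,C)$ is a right Quillen functor for the projective model structure (it preserves fibrations and weak equivalences levelwise), so its left adjoint $\on{colim}$ is left Quillen and sends projective-cofibrant objects to their homotopy colimits.

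I do not expect a serious obstacle. The only step that deserves genuine care is the identification of the latching category at $y\in Y$, where one must use both the bipartite hypothesis (no arrows among the $x$'s) and the poset hypothesis (at most one arrow between any two objects, so the colimit really is a coproduct rather than something with parallel maps to be coequalized). Finiteness of $P$ makes the Reedy/projective comparison immediate and avoids any transfinite bookkeeping.
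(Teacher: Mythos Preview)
Your proof is correct and takes a genuinely different route from the paper's. The paper argues directly from the definition of projective cofibrancy: given a pointwise acyclic fibration $\eta\colon G\to H$ and a map $\nu\colon F\to H$, it first chooses lifts $\mu(x)\colon F(x)\to G(x)$ for each $x\in X$ using that $F(x)$ is cofibrant, and then for each $y\in Y$ solves the square
\[
\begin{tikzcd}
\displaystyle\coprod_{x\to y}F(x) \arrow[r] \arrow[d] & G(y) \arrow[d,"\eta(y)"] \\
F(y) \arrow[r,"\nu(y)"] \arrow[ru,dashed] & H(y)
\end{tikzcd}
\]
using the hypothesized cofibration on the left. Your argument instead recognizes $P$ as a direct category via the two-step degree function, invokes the standard identification of the Reedy and projective model structures on $\on{Fun}(P,C)$ for direct $P$, and then checks Reedy cofibrancy by computing the latching maps, which are exactly the maps appearing in the hypothesis.

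The two approaches are essentially dual in effort: the paper's direct lifting argument is self-contained and requires no Reedy machinery, while your approach explains \emph{why} the hypotheses take precisely this form (they are the latching conditions) and places the lemma in a standard framework. A small bonus of the Reedy route is that it makes the existence of the projective model structure automatic for any model category $C$, without a cofibrant-generation assumption, since for direct categories the Reedy structure always exists and coincides with the projective one. Your remark that the poset hypothesis is needed to make the latching category discrete is slightly overcautious: discreteness already follows from the bipartite assumption alone (no morphisms among objects of $X$), though of course the poset hypothesis is what makes the indexing over $X/y$ agree with indexing over morphisms into $y$.
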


\begin{proof}
We need to check the right lifting property of $F$ with respect to acyclic fibrations in $\on{Fun}(P,C)$, meaning we need to solve the lifting problem
\[
\begin{tikzcd}
                                             & G \arrow[d, "\eta"] \\
F \arrow[r, "\nu"] \arrow[ru, "\mu", dashed] & H
\end{tikzcd}
\]
where $G,H\colon P\to C$ and $\eta\colon G\to H$ is a acyclic fibration, meaning that $\eta(p)$ is an acyclic fibration in $C$ for all $p\in P$. For each $x\in X$, we can use that $F(x)\in C$ is cofibrant to lift $\nu(x)$ along $\eta(x)$, defining the morphism $\mu(x)\colon F(x)\to G(x)$. Let $y\in Y$ and consider the composite morphism in $C$
\[ \xi_y\colon \coprod_{\alpha:x\to y \in X/y}F(x) \xlongrightarrow{\coprod_{\alpha:x\to y \in X/y}\mu(x)} \coprod_{\alpha:x\to y \in X/y}G(x) \xlongrightarrow{\coprod_{\alpha:x\to y \in X/y} G(\alpha)} G(y)\,.\]
Using that $\coprod_{\alpha:x\to y \in X/y}F(\alpha)$ is a cofibrantion and $\eta(y)$ a trivial cofibration, we can solve the lifting problem
\[
\begin{tikzcd}
\coprod_{\alpha:x\to y \in X/y}F(x) \arrow[r, "\xi_y"] \arrow[d, "\coprod_{\alpha:x\to y \in X/y}F(\alpha)"'] & G(y) \arrow[d, "\eta(y)"] \\
F(y) \arrow[r, "\nu(y)"] \arrow[ru, "\mu(y)", dashed]                                                         & H(y)
\end{tikzcd}
\]
defininig $\mu(y)$. Inspecting the construction, one immediately sees that these choices of $\mu(x)$ and $\mu(y)$ for $x\in X,y\in Y$ assemble into a natural transformation $\mu$. Further, by construction, $\eta(x)\circ \mu(x)=\nu(x)$ and $\eta(y)\circ \mu(y)=\nu(y)$ for all $x\in X,y\in Y$ and thus also $\eta\circ \mu =\nu$. This shows that $\mu$ is the desired lift, concluding the proof.
\end{proof}

\begin{remark}\label{rem:nCY}
We expect the $k$-linear $\infty$-category $\glsec(\eS,\psG)\simeq \mathcal{D}(\GSn)$ to admit a relative left $n$-Calabi--Yau structure in the sense of \cite{BD19} if $n$ is odd or $\Sgh$ is orientable in the sense of \Cref{def:orientable}. Locally, near a given vertex of degree $m$, there should be a relative Calabi--Yau structure on $\psG(v)$ arising from the relative Calabi--Yau structure of the derived $\infty$-category of the relative Ginzburg algebra of an $n$-gon, arising as a special case of \cite[Thm.~6.7]{Chr23}. These relative Calabi--Yau structures would then glue to a global relative Calabi--Yau structure on $\glsec(\eS,\psG)$, giving a relative and left Calabi--Yau generalization of the statement of \Cref{prop:nCY}.
\end{remark}

%=========================================================
\subsection{Arc system kits}
%=========================================================

For the following, we fix a weighted marked surface $\sow$ with a mixed-angulation $\AS$, dual S-graph $\Sgh$ and extended graph $\eS$, see \Cref{def:extdgraph}. We further fix a commutative ring $k$.

When considering arc system kits for an $\eS$-parametrized perverse schober $\hF$, we will always assume that the set of singularities of $\hF$ consists exactly of the interior vertices of $\eS$. We fix such an $\eS$-parametrized perverse schober $\hF$.

\begin{definition}[\cite{CHQ23}]\label{def:ask}
An \emph{arc system kit} for $\hF$ consists of
\begin{enumerate}[label=\roman*)]
\item an object $L_e\in \hF(e)$ for each non-virtual edge $e$ of $\eS$,
\item an object $L_{v,a}\in \hF(v)$ for each vertex $v$ and non-virtual incident halfedge $a$ of $\eS$,
\item an equivalence in $\hF(e)$
\[ \hF(v\xrightarrow{b}e)(L_{v,a})\simeq \begin{cases} L_e & a=b\\ 0 & \text{else}\end{cases}\]
for each pair of non-virtual halfedges $a,b$ incident to a vertex $v$ and where $b$ is part of the edge $e$.
\item an equivalence in $\hF(c)$
\[ \hF(v\to c)L_{v,b}\simeq \hF(v\to c)L_{v,a}[1-d(a,b)]\,,\]
for each virtual edge $c$ of $\eS$ incident to a vertex $v$ of weight $\infty$ and consecutive non-virtual halfedges $a,b$  (i.e.~$b$ follows $a$) incident to $v$.
\item an equivalence in $\hF(v)$
\begin{equation}\label{eq:twistofLva} T_{\hF(v)}(L_{v,b})\simeq L_{v,a}[1-d(a,b)]\,,\end{equation}
for each internal vertex $v$ of $\eS$ and consecutive internal halfedges $a,b$ incident to $v$. Here $T_{\hF(v)}$ denotes the twist functor of the spherical adjuction
\[
F_v'\coloneqq \prod_{i=1}^n\hF(v\xrightarrow{a_i}e_i)\colon \hF(v)\longleftrightarrow \prod_{i=1}^n\hF(e_i)\noloc G_v' \,.
\] If $v$ has valency $1$ with the single incident halfedge $a$, we instead require $T_{\hF(v)}(L_{v,a})\simeq L_{v,a}[1-d(v)]$ with $d(v)$ the degree of $v$.
\end{enumerate}
Note that in particular, if $v$ is $q$-valent of degree $m$, one has $T_{\hF(v)}^m(L_{v,a})\simeq L_{v,a}[q-m]$.
\end{definition}

 Arc system kits glue, in the sense that a global arc system kit amounts simply to local arc systems kits, one for each vertex of $\eS$ and its incident edges, plus the requirement that the data at the edges agrees.

\begin{definition}[\cite{CHQ23}]\label{def:positivekit}~
\begin{itemize}
\item We denote $\Ende=\on{End}_{\hF(e)}(L_e)\in \mathcal{D}(k)$ for any choice of non-virtual edge $e$ of $\eS$ (choosing a different edge yields an equivalent $k$-module) and $\Endv=\on{End}_{\hF(v)}(L_{v,a})\in \mathcal{D}(k)$ for each vertex $v$ of $\eS$ and any choice of incident non-virtual halfedge $a$.
\item We call the arc system kit of $\hF$ \textit{positive} if $\on{H}^0(\Ende)\simeq \on{H}^0(\Endv)\simeq k$, $\on{H}^i(\Ende)\simeq \on{H}^i(\Endv)\simeq 0$ for all $v\in \Sgh_0, i<0$ and finally if, for any weight $-1$ vertex $v$ with incident halfedge $a\in e\in \Sgh_1$, the $k$-vector space $\on{H}^1(\Endv)\simeq k$ is generated by the extension arising from combining the cofiber sequence
\[ L_{v,a}\xlongrightarrow{\on{unit}} \on{radj}(\hF(v\xrightarrow{a} e))\circ \hF(v\xrightarrow{a} e) (L_{v,a}) \longrightarrow T_{\hF(v)}(L_{v,a})\]
with the equivalence $L_{v,a}\simeq T_{\hF(v)}(L_{v,a})$.
\end{itemize}
\end{definition}

\begin{proposition}\label{prop:BGAschoberarcsystem}
The perverse schober $\psG$ admits a positive arc system kit.
\end{proposition}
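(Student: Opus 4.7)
The plan is to choose the obvious simples on each local value of $\psG$ and verify each axiom by a direct computation on the local models of \Cref{constr:Fnmr} and \Cref{constr:Fnr}. Concretely, for each non-virtual edge $e$ of $\eS$ put $L_e \coloneqq k$, the simple $k[t_{2-n}]$-module with $t_{2-n}$ acting by zero, inside $\psG(e) = \mathcal{D}(k[t_{2-n}])$. For each vertex $v$ of $\Sgh$ and each non-virtual incident halfedge $a$ put $L_{v,a} \coloneqq S_a$, the simple right dg-module supported at the object $a$ of the quiver $Q_v^n$, viewed in $\psG(v) = \mathcal{D}(G_v^n)$. Because the sign-involution $\phi_!$ from \Cref{constr:Frgraph} fixes the simple $k$, the local data at the two ends of each edge agree and assemble into global data.

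Condition (iii) is then immediate from the construction: $\hF_v^n(v\to e)$ is (up to $\phi_!$) the pullback along the cofibrant dg-functor $\zeta_{v,b}\colon k[t_{2-n}] \to G_v^n$ which sends $t_{2-n}$ to the distinguished generator attached to the halfedge $b$, namely $l_j$ in the interior case and $\beta_{j,j}$ in the boundary case of \Cref{lem:Gnr}. Restriction of $S_a$ along such a $\zeta_{v,b}$ is the value of $S_a$ at the object $a$ as a chain complex, which is $k = L_e$ when $a = b$ and $0$ otherwise.

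For condition (v) at an interior vertex $v$, I would invoke the proof of \Cref{prop:sphGnm}: the outer twist of $(\psi_{n,m})_! \dashv \psi_{n,m}^*$ is cyclic rotation of the $m$ edge-components by one step combined with $\phi_!$. The characterization of the spherical adjunction in \Cref{def:schobernspider} then pins down $T_{\hF(v)}$ as the endofunctor of $\mathcal{D}(G_v^n)$ sending $S_b$ to $S_a[1-d(a,b)]$ for consecutive halfedges $a,b$, with the shift coming from the degrees $d(j,i) + rm$ of the generators $\alpha_{i,j}^r$ of $Q_v^n$; the required iterate $T_{\hF(v)}^m(L_{v,a}) \simeq L_{v,a}[q-m]$ is the Calabi--Yau-dimension-$n$ relation built into $G_v^n$. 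The analogue (iv) at a boundary vertex is obtained similarly after passing through the Morita identification $\mathcal{D}(G_v^n) \simeq \mathcal{D}((B_v^n)^{\on{alg}})$ of \Cref{lem:Gnr}, under which the $S_a$ become the standard simples on an upper-triangular dg-algebra; a direct computation with the cosimplicial model of $\hF_r(0)$, together with the shifts $[i-1-d(1,i)]$ and the twists $\phi_!^{i+1-r}$ from \Cref{constr:Fnr}, then yields $\hF(v\to c)(L_{v,b}) \simeq \hF(v\to c)(L_{v,a})[1-d(a,b)]$.

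For positivity, $\on{End}_{k[t_{2-n}]}(k) \simeq k \oplus k[n-1]$, so $L_e$ has $H^0 = k$ and vanishes in negative degrees. At each vertex, the total endomorphism algebra of the $L_{v,a}$ is, by Koszul duality, the local part of the RGB algebra appearing in \Cref{rem:reducedGinzburg}: this lives in non-negative degrees with degree-zero part $\bigoplus k\cdot e_a$, which gives the required bounds. For the weight $-1$ case, where a valency-$1$ interior vertex of degree $1$ forces $T_{\hF(v)}(L_{v,a}) \simeq L_{v,a}$, one must identify the generator of $H^1(\on{End}(L_{v,a})) \simeq k$ with the connecting map of the cofiber sequence $L_{v,a} \to \on{radj}(\hF(v\to e))\hF(v\to e)(L_{v,a}) \to T_{\hF(v)}(L_{v,a})$ composed with $L_{v,a} \simeq T_{\hF(v)}(L_{v,a})$; this is a direct calculation in the monogon quiver $Q_{n,1}$. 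I expect the main obstacle to be the shift and sign bookkeeping, specifically checking that the $\phi_!$-corrections inserted in \Cref{constr:Frgraph} precisely cancel against the cyclic permutation arising from the twist so that the shift in (iv) and (v) comes out to exactly $1-d(a,b)$ rather than a sign- or off-by-one variant.
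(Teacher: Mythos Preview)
Your proposal is correct and follows essentially the same approach as the paper: both choose $L_e=k$ and $L_{v,a}$ the simple module at the vertex $a$ of $Q_v^n$, verify (iii) immediately from the description of $\psG(v\to e)$ as pullback along $\zeta_{v,a}$, obtain (v) from the rotational action of the twist functor established in \Cref{prop:sphGnm}, and deduce positivity via Koszul duality (the paper records $\Endv\simeq k[x_{n/m}]/(x_{n/m}^m)$ explicitly). One small slip: your $\on{End}_{k[t_{2-n}]}(k)\simeq k\oplus k[n-1]$ should read $k\oplus k[1-n]$ under the paper's shift convention, though your conclusion about vanishing in negative degrees is unaffected.
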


\begin{proof}
The arc system kit is obtained as follows.
\begin{enumerate}
\item[i)] We set $L_e=\underline{k}\in \mathcal{D}(k[t_{2-n}])=\psG(e)$ to be the trivial module with cohomology $k$ for each edge $e$ of $\Sgh$.
\item[ii)] We set $L_{v,a}\in \mathcal{D}(G_v^n)\simeq \hF(v)$ for $v$ a vertex and $a$ an incident halfedge to be the $G_v^n$-module with cohomology $k$ generated by the lazy path at the vertex of $Q_v^n$ corresponding to the halfedge $a$\footnote{The corresponding object in the abelian category of $(G_v^n)^{\on{alg}}$-modules is also called the simple module associated with the vertex of $Q_v^n$.}.
\item[iii)] Equivalences as in part iii) of \Cref{def:ask} are immediate.
\item[iv)] Equivalences as in iv) in \Cref{def:ask} can also be obtained via a direct computation.
\item[v)] Consider a vertex $v$ of $\Sgh$, with degree $m$ and valency $m$. One finds that the action of the twist functor $T_{\hF(v)}$ realizes the remaining rotational $\ZZ/m$-symmetry of the dg-algebra $G_{n,m}$. This gives rise to the equivalences as in v) in \Cref{def:ask}. We omit the details in the case where $v$ has degree $m$ and valency $r<m$, where a small computation yields the equivalences from v) in \Cref{def:ask}.
\end{enumerate}
The corresponding objects $\ASG$ consist of the $\GSn$-modules generated over $k$ by the lazy paths associated with the vertices of $\QSn$.

We find for each edge $e$ of $\Sgh$ an equivalence $\on{End}(L_e)\simeq \on{H}^*(S^{n-1})\simeq k\oplus k[1-n]$ and for any vertex $v$ of degree $m$ with incident halfedge $a$ an equivalence $\on{End}(L_{v,a})\simeq k[x_{n/m}]/(x_{n/m}^{m})$, with $|x_{n/m}|=\frac{n}{m}$. The latter equivalence follows for instance from Koszul duality. The arc-system kit is thus positive.
\end{proof}

The results of \cite{CHQ23} thus apply to study $\glsec(\eS,\psG)$. This allows us to associate a global section of $\psG$ to each graded arc in ${\bf S}$. We denote by $\A_\Sgh$ the collection of objects associated in this way with the edges of the S-graph $\Sgh$, which are canonically graded arcs. We remark that $\A_\Sgh$ forms a simple-minded collection, see \cite[Prop.~4.14]{CHQ23}. Furthermore, it is not difficult to see that the objects in $\A_{\Sgh}$ correspond to the simple $G({\bf S},n)$-modules associated with the vertices of the underlying quiver.

The simple-minded collection generates a stable subcategory $\mathcal{C}(\Sgh,\psG)\subset \glsec(\eS,\psG)$ and forms the simple objects in a bounded $t$-structure on $\mathcal{C}(\Sgh,\psG)$. Further, $\mathcal{C}(\Sgh,\psG)\simeq \D^{\on{nil}}(\GSn)$ describes the nilpotent derived $\infty$-category of $\GSn$. 

All simple tilts of this initial $t$-structure on $\mathcal{C}(\Sgh,\psG)$ correspond to flips of the S-graph $\Sgh$. The exchange graph of flips thus embeds as a type of skeleton into the space of stability conditions of $\mathcal{C}(\Sgh,\psG)$. This embedding extends by \cite[Thm.~5.4]{CHQ23} to an embedding of a space of framed quadratic differentials $\FQuad{}{\sow}$ into the space of stability conditions. This includes quadratic differentials with zeros of order $k\geq 1$ so that $k+2$ divides $n$, poles of arbitrary order $k\geq 1$, exponential singularities, and marked regular points if $n$ is even, their appearance depending on $\sow$.

Finally, we describe the stable subcategory $\mathcal{C}(\Sgh,\psG)$.

\begin{proposition}\label{prop:generatedstablecategory}
The stable subcategory $\mathcal{C}(\Sgh,\psG)\subset \glsec(\eS,\psG)$ generated by $\A_\Sgh$ describes the perfect derived $\infty$-category $\per(A(\Sgh,n))$ of the RGB algebra $A(\Sgh,n)$, see \Cref{def_gbga}.
\end{proposition}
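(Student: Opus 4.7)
The plan is to identify $\mathcal{C}(\Sgh,\psG)$ with the thick subcategory of $\mathcal{D}(\GSn)$ generated by the simple $\GSn$-modules, and then apply Koszul duality to compute the derived endomorphism algebra of their sum, matching it with $A(\Sgh,n)$.

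First, using the equivalence $\glsec(\eS,\psG) \simeq \mathcal{D}(\GSn)$ from \Cref{thm:BGAschober}, the proof of \Cref{prop:BGAschoberarcsystem} exhibits $\A_\Sgh$ as the family $\{S_e\}_{e \in \Sgh_1}$ of simple $\GSn$-modules attached to the vertices of $\QSn$. Setting $S \coloneqq \bigoplus_e S_e$, the stable subcategory $\mathcal{C}(\Sgh,\psG) \subset \mathcal{D}(\GSn)$ generated by $\A_\Sgh$ is by construction $\thick(S)$, and hence equivalent to $\per(\on{REnd}_{\GSn}(S))$ by standard Morita theory (Keller's tilting theorem). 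It therefore suffices to produce a quasi-isomorphism $\on{REnd}_{\GSn}(S) \simeq A(\Sgh,n)$.

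To obtain such a quasi-isomorphism, I would pass to the Morita-equivalent dg-algebra $B \coloneqq (\GSn^{\on{rd}})^{\on{alg}}$ of \Cref{rem:reducedGinzburg}, which that remark identifies on the nose with the Koszul dual $A(\Sgh,n)^!$ computed in \Cref{subsec_koszuldual}. Under this identification the tautological augmentation $B \twoheadrightarrow R \coloneqq k^{\Sgh_1}$ corresponds to the Koszul augmentation of $A(\Sgh,n)^!$, and exhibits $S$ as the pullback along it. Its augmentation ideal is nilpotent because $A(\Sgh,n)$ is finite-dimensional. The Koszul-duality proposition of \Cref{subsec_koszuldual} (namely $\on{End}_{A^!}(R) \simeq A$ for a finite-dimensional augmented dg-algebra $A$ with nilpotent augmentation ideal), applied with $A = A(\Sgh,n)$ and $A^! = B$, then yields
\[
\on{REnd}_{\GSn}(S) \;\simeq\; \on{REnd}_{B}(R) \;\simeq\; A(\Sgh,n),
\]
and composing with the Morita equivalence $\thick(S) \simeq \per(\on{REnd}_{\GSn}(S))$ gives the claimed $\mathcal{C}(\Sgh,\psG) \simeq \per(A(\Sgh,n))$.

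The main obstacle is the bookkeeping to verify, rigorously and with correct signs, that the reduction procedure of \Cref{rem:reducedGinzburg} really produces the explicit Koszul-dual presentation of \Cref{subsec_koszuldual}: one has to match the generators (in particular $L_e$ with $\sigma_i$, the loops $l_e$ with $t_i$, and the mixed interior/boundary edge generators with the $\beta_{i,j}$) together with their degrees and differentials, including the relation $d(L_e) = d(L) - d(L')$ against the formula for $d\sigma_i$ in the mixed interior-boundary case. A secondary verification is finite-dimensionality of $A(\Sgh,n)$, which is immediate in the absence of boundary vertices from the cyclic relations $c_i = (-1)^{n-1}c_j$, and is ensured in the general case by the additional relations $\tau_i^2 = 0$ together with the cycle-killing differential $d(\tau_j) = (-1)^n c_i$ that bounds the length of surviving monomials.
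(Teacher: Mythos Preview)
Your approach is essentially the same as the paper's: compute the derived endomorphism algebra of the sum of simples via Koszul duality, using that $(\GSn^{\on{rd}})^{\on{alg}} \cong A(\Sgh,n)^!$ from \Cref{rem:reducedGinzburg}, and conclude by Morita theory. The paper's proof is terser but follows the same line.

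One small point you glide over: you identify $\mathcal{C}(\Sgh,\psG)$ with $\thick(S)$ ``by construction'', but a priori $\mathcal{C}(\Sgh,\psG)$ is only the \emph{stable} (triangulated) subcategory generated by $\A_\Sgh$, not its idempotent completion, and Keller's tilting equivalence lands in $\per(\on{REnd}(S))$, which is idempotent complete. The paper closes this gap by observing that $\A_\Sgh$ is a simple-minded collection, so $\mathcal{C}(\Sgh,\psG)$ carries a bounded $t$-structure and is therefore automatically idempotent complete; you should insert this step before invoking Morita theory. Also, in the sentence ``Its augmentation ideal is nilpotent because $A(\Sgh,n)$ is finite-dimensional'', the antecedent of ``Its'' should be $A(\Sgh,n)$ (which is indeed the hypothesis of the Koszul-duality proposition in \Cref{subsec_koszuldual}), not $B=A(\Sgh,n)^!$: the latter is a tensor algebra and its augmentation ideal is far from nilpotent.
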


\begin{proof}
Since $\A_\Sgh$ forms a simple-minded collection, $\mathcal{C}(\Sgh,\psG)$ admits a bounded t-structure and is thus idempotent complete. By Koszul duality, the derived endomorphism algebra of $\bigoplus_{e\in \Sgh_1}\A_e$ is given by the RGB algebra $A(\Sgh,n)$, so that $\mathcal{C}(\Sgh,\psG)\simeq \per(A(\Sgh,n))$.
\end{proof}

\begin{lemma}\label{lem:Dfin=Dnil}
Suppose that each boundary component of $\sow$ contains a singular point. Then under the equivalence of $\infty$-categories from \Cref{thm:BGAschober}, $\mathcal{C}(\Sgh,\psG)$ is identified with the finite derived $\infty$-category $\D^{\on{fin}}(\GSn)$. Thus $\D^{\on{nil}}(\GSn)=\D^{\on{fin}}(\GSn)$ as full subcategories of $\D(\GSn)$.
\end{lemma}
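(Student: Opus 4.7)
The plan is to prove the two inclusions separately, with the harder direction relying on the coconnectivity of $\GSn$ together with a standard Postnikov devissage.

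For $\D^{\on{nil}}(\GSn)\subseteq \D^{\on{fin}}(\GSn)$, note that the generating simples $\A_e$ of $\D^{\on{nil}}(\GSn)$ (for $e$ an edge of $\Sgh$, corresponding to a vertex of $\QSn$) are each one-dimensional over $k$ and hence perfect as $k$-modules. Since $\D^{\on{fin}}(\GSn)$ is a thick subcategory of $\D(\GSn)$, it contains the thick hull of these simples. This inclusion requires no further hypothesis.

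For the opposite inclusion, the key observation is that $\GSn$ is \emph{coconnective}: by inspection of \Cref{constr:Grgraph}, \Cref{lem:homologyGnm}, and \Cref{lem:Gnr}, every generator of $\QSn$ sits in non-positive degree, and each graded piece of $H^*(\GSn)$ is finite-dimensional (locally, $H^*(G_{n,m})\simeq k[t_{2-n}]^{\oplus 2m}$, and the global gluing is a finite colimit under the standing boundary assumption that each boundary component of $\sow$ contains a singular point). Consequently, $\D(\GSn)$ admits the natural t-structure whose heart is equivalent to $H^0(\GSn)$-modules. The ring $H^0(\GSn)$ is finite-dimensional, with orthogonal idempotents $e_i$ indexed by the edges of $\Sgh$ and Jacobson radical generated by the degree-zero arrows of $\QSn$ (which are non-invertible and nilpotent in $H^0(\GSn)$ because the local relations force all consecutive degree-zero compositions to be coboundaries). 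Hence the simple $H^0(\GSn)$-modules are exactly the vertex simples $\A_e$. Given $M\in \D^{\on{fin}}(\GSn)$, the truncation functors of this t-structure yield a finite Postnikov tower with subquotients $H^i(M)[-i]$, each a finite-dimensional $H^0(\GSn)$-module. Iterating a composition series by simples $\A_e$ presents $M$ as built from finitely many shifts of the $\A_e$ via finite colimits in $\D(\GSn)$, placing $M$ in $\D^{\on{nil}}(\GSn)$.

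The main obstacle I anticipate is the precise verification that $H^0(\GSn)$ has no simples beyond the $\A_e$, i.e.\ that all degree-zero arrows of $\QSn$ are nilpotent in $H^0(\GSn)$. This is where the boundary hypothesis is crucial: if a boundary component of $\sow$ lacked a singular point, one would obtain a cycle of consecutive degree-zero arrows in $\QSn$ running around that boundary, potentially producing a non-nilpotent element in $H^0(\GSn)$ and an additional simple module lying outside $\D^{\on{nil}}(\GSn)$. With the boundary assumption in force, every such cycle is broken by passage through a boundary vertex of $\Sgh$, and the differential formulas from \Cref{constr:Grgraph} together with the cohomology computations of \Cref{lem:homologyGnm} and \Cref{lem:Gnr} force the desired nilpotency locally, hence globally.
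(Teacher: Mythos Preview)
Your overall strategy—use connectivity of $\GSn$ to get the standard $t$-structure with heart $\on{Mod}(H^0(\GSn))$, then run a Postnikov devissage—is a reasonable alternative to the paper's approach, but the justification you give for the crucial step is incorrect. The claim that $H^0(\GSn)$ is finite-dimensional, or that the degree-zero arrows are nilpotent in it, is false in general. Already for $n=2$ the loops $l_e$ sit in degree $0$, and \Cref{lem:homologyGnm} shows that the classes $l_e^q$ are nonzero in cohomology for all $q\geq 0$, so $H^0$ contains a polynomial subalgebra. Even for $n\geq 3$, your argument only establishes that compositions of degree-zero arrows \emph{at a single vertex of $\Sgh$} are coboundaries; compositions that pass through an edge of $\Sgh$ and involve arrows from both endpoint vertices are not covered by any local relation. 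Such cross-vertex compositions arise precisely when one winds around an \emph{interior marked point} of $\sow$, and they produce degree-zero cycles of arbitrary length in $\QSn$ which are not coboundaries. Thus $H^0(\GSn)$ is typically infinite-dimensional whenever $\sow$ has an interior marked point, regardless of the boundary hypothesis.

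You have also misidentified where the boundary hypothesis enters. It does not serve to break cycles of degree-zero arrows—those come from marked points, not from boundary components—so ``every such cycle is broken by passage through a boundary vertex'' is not the right picture. The paper's proof uses the hypothesis in a genuinely different way: it constructs, for each edge $e$, a geometric object $\on{ev}_e^*(k)\simeq \A_{c_e}$ associated with an arc $c_e$ built from type~II segments in the sense of \cite{CHQ23}, and the hypothesis guarantees that $c_e$ has only finitely many segments (otherwise it would wind infinitely around a boundary component lacking a singular point). One then shows $\A_{c_e}\in\D^{\on{nil}}(\GSn)$ directly by expressing it as an iterated cone of the simples, and uses that the collection $\{\on{ev}_e^*(k)\}_e$ detects vanishing in $\D^{\on{fin}}(\GSn)$. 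This geometric argument sidesteps any analysis of $H^0(\GSn)$. If you wish to salvage your approach, the correct intermediate claim is not finite-dimensionality of $H^0(\GSn)$ but rather that every simple $H^0(\GSn)$-module is isomorphic to some $\A_e$; this is more delicate and requires handling the degree-zero loops carefully (in particular the loops $l_e$, and the possibility of self-folded edges).
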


\begin{proof}
Since $\D^{\on{nil}}(\GSn)$ and $\D^{\on{fin}}(\GSn)$ are idempotent complete, it suffices to show that there is an object $Y\in \D^{\on{nil}}(\GSn)$ which is a generator of $\D^{\on{fin}}(\GSn)$, in the sense that any $X\in \D^{\on{fin}}(\GSn)$ vanishes if and only if $\on{RHom}_{\D^{\on{fin}}(\GSn)}(\A_\Sgh,X)\simeq 0$. For each edge $e$ of $\psG$, consider the evaluation functor $\on{ev}_e\colon  \glsec(\eS,\psG) \to \psG(e)=\D(k[t_{2-n}])$ with left adjoint $\on{ev}_e^*$. A global section $X\in \D^{\on{fin}}(\GSn)$ vanishes if and only $\on{ev}_e(X)\simeq 0$ for all $e\in \Sgh$, since the functor $\on{ev}_e$ amounts to restricting a $\GSn$-module to the vertices of $\GSn$ corresponding to $e$. Using that $k\in \D^{\on{fin}}(k[t_{2-n}])$ is a generator, it follows that $\bigoplus_{e\in \Sgh}\on{ev}_e^*(k)$ is a generator of $\D^{\on{fin}}(k[t_{2-n}])$, provided that $\bigoplus_{e\in \Sgh}\on{ev}_e^*(k)\in \D^{\on{fin}}(k[t_{2-n}])$ holds. We next give a geometric description of $\on{ev}_e^*(k)$ for each edge $e$ and conclude that it lies in $\D^{\on{nil}}(\GSn)$, which then concludes the proof.

The geometric description of $\on{ev}_e^*(k)$ is obtained in the same way as in the proof of Proposition 5.19 in \cite{Chr21b}, which corresponds to the special case of triangulated surface ($n=3$) without degree $1$ vertices in the S-graph. For each edge $e$, there is a unique graded arc $c_e$ obtained as the composite of finitely many type II segments in the sense of \cite[Section 4]{CHQ23}, such that $c_e$ intersects $e$ (in degree $0$) and such that the segments of $c_e$ each begin at a halfedge of some vertex $v$ and end at the next halfedge in the counterclockwise order of the halfedges at $v$. Note that the fact that $c_e$ has finitely many segments uses that each boundary component of $\sow$ contains a singular point. By \cite[Constr.~4.12]{CHQ23}, there is an associated global section $\A_{c_e}\in \glsec(\eS,\psG)$. Using that cones between the objects in $\A_\Sgh$ corresponding to smoothing out their intersections \cite[Lem.~4.16]{CHQ23}, it is straightforward to see that $\A_{c_e}\in \D^{\on{nil}}(\GSn)$. 

The construction of the equivalence $\A_{c_e}\simeq \on{ev}_e^*(k)$ amounts to an abstract local-to-global argument. 
The main ingredients are as follows, we refer to \cite[Prop.~5.19]{Chr21b} for more details on how to assemble these. Associated with each segment $\delta$ of $c_e$ is an object $\A_\delta$ in the $\infty$-category $\losec(\eS,\psG)$ of local sections of $\psG$, in the sense of \cite[Def.~3.14]{CHQ23}. The gluing of the segments to $c_e$ corresponds to the gluing of these local sections to produce $\A_{c_e}$. To construct the desired equivalence, one first shows that $\on{RHom}_{\glsec(\eS,\psG)}(\A_{c_e},\mhyphen)\simeq \on{ev}_e$ and then passes to left adjoints. The functor $\on{RHom}_{\glsec(\eS,\psG)}(\A_{c_e},\mhyphen)$ arises from restricting the functor $\on{RHom}_{\losec(\eS,\psG)}(\A_{c_e},\mhyphen)$ which is computed by gluing the functors $\on{RHom}_{\losec(\eS,\psG)}(\A_{\delta},\mhyphen)$, which can themselves be described in terms of suitable evaluation functors. 
\end{proof}

By \Cref{thm:nonformalgen}, there exists an equivalence between $\D^{\on{nil}}(\GSn)$ and the $\infty$-category arising from the $A_\infty$-category $\mathcal C_{\mathrm{core}}(S,n)$. We expect that similarly, $\D^{\on{fin}}(\GSn)$ is equivalent to $\mathcal C_{\mathrm{len}}(S,n)$. We thus conjecture that the equality $\D^{\on{nil}}(\GSn)=\D^{\on{fin}}(\GSn)$ holds if and only if the corresponding quadratic differentials have no second order poles. This would generalize the statement of \Cref{lem:Dfin=Dnil} which corresponds to the case of quadratic differentials without poles of order $\geq 2$.

\bibliography{biblio}
\bibliographystyle{alpha}

%=========================================================
%=========================================================
\end{document}